\newcommand{\cA}{{\mathcal A}}
\newcommand{\cB}{{\mathcal B}}
\newcommand{\cD}{{\mathcal D}}
\newcommand{\cE}{{\mathcal E}}
\newcommand{\cF}{{\mathcal F}}
\newcommand{\cG}{{\mathcal G}}
\newcommand{\cH}{{\mathcal H}}
\newcommand{\cK}{{\mathcal K}}
\newcommand{\cL}{{\mathcal L}}
\newcommand{\cO}{{\mathcal O}}
\newcommand{\cQ}{{\mathcal Q}}
\newcommand{\cU}{{\mathcal U}}
\newcommand{\bE} {{\mathbb{E}}}
\newcommand{\bC} {{\mathbb{C}}}
\newcommand{\bT}{{\mathbb{T}}}
\newcommand{\sbm}[1]{\left[\begin{smallmatrix} #1
		\end{smallmatrix}\right]}
\newcommand{\bD}{{\mathbb D}}
\newcommand{\lam}{\lambda}
\newcommand{\bA}{{\mathbf A}}
\newcommand{\bB}{{\mathbf B}}
\newcommand{\bfT}{{\mathbf T}}
\newcommand{\bN}{{\mathbf N}}
\newcommand{\bfU}{{\mathbf U}}
\newcommand{\bfV}{{\mathbf V}}
\newcommand{\tA}{\widetilde A}
\newcommand{\tB}{\widetilde B}
\newcommand{\tG}{\widetilde G}
\newcommand{\bcH}{{\boldsymbol{\mathcal H}}}
\newcommand{\bcK}{{\boldsymbol{\mathcal K}}}
\newtheorem{thm}{Theorem}[section]
\newtheorem{corollary}[thm]{Corollary}
\newtheorem{lemma}[thm]{Lemma}
\newtheorem{proposition}[thm]{Proposition}
\theoremstyle{definition}
\newtheorem{definition}[thm]{Definition}
\newtheorem{remark}[thm]{Remark}
\newtheorem{example}[thm]{Example}
\numberwithin{equation}{section}
\def\textmatrix#1&#2\\#3&#4\\{\bigl({#1 \atop #3}\ {#2 \atop #4}\bigr)}
\def\dispmatrix#1&#2\\#3&#4\\{\left({#1 \atop #3}\ {#2 \atop #4}\right)}
\numberwithin{equation}{section}
\def\textmatrix#1&#2\\#3&#4\\{\bigl({#1 \atop #3}\ {#2 \atop #4}\bigr)}
\def\dispmatrix#1&#2\\#3&#4\\{\left({#1 \atop #3}\ {#2 \atop #4}\right)}
\begin{document}
\title[Models for Tetrablock Contractions]{Dilation theory and functional models for tetrablock contractions}
\author[J. A. Ball]{Joseph A. Ball}
\address{Department of Mathematics, Virginia Tech, Blacksburg, VA 24061-0123, USA} \email{joball@math.vt.edu}
\author[H. Sau]{Haripada Sau}
\address{Department of Mathematics, Indian Institute of Science Education and Research Pune, Maharashtra 411008} 
\email{haripadasau215@gmail.com; hsau@iiserpune.ac.in}
\subjclass{Primary: 47A13. Secondary: 47A20, 47A25, 47A56, 47A68, 30H10}
\keywords{commutative contractive operator-tuples; functional model; unitary dilation; isometric lift; spectral set; pseudo-commutative
contractive lift}
\thanks{The research of the second named author was supported by DST-INSPIRE Faculty Fellowship DST/INSPIRE/04/2018/002458.}

\begin{abstract} 
A classical result of Sz.-Nagy asserts that a Hilbert space contraction operator $T$ can be dilated to a unitary $\cU$, i.e.,
 $T^n = P_\cH \cU^n|\cH$ for all $n =0,1,2,\dots$ .  A more general multivariable setting
for these ideas  is the setup where (i) the unit disk is replaced by a domain $\Omega$ contained in ${\mathbb C}^d$, (ii) the contraction
operator  $T$ is replaced by an $\Omega$-contraction, i.e., a commutative operator $d$-tuple $\bfT = (T_1, \dots, T_d)$ on a Hilbert space $\cH$
such that $\| r(T_1, \dots, T_d) \|_{\cL(\cH)} \le \sup_{\lam \in \Omega} | r(\lam) |$ for all rational functions with no singularities in
$\overline{\Omega}$ and the unitary operator $\cU$ is replaced by an $\Omega$-unitary operator tuple, i.e., a commutative operator $d$-tuple
$\bfU = (U_1, \dots, U_d)$ of commuting normal operators with joint spectrum contained in the distinguished boundary $b\Omega$
of $\Omega$.  For a given domain $\Omega \subset {\mathbb C}^d$,  the {\em rational dilation question} asks:  given an $\Omega$-contraction $\bfT$ on $\cH$,
is it always possible to find an $\Omega$-unitary
$\bfU$ on a larger Hilbert space $\cK \supset \cH$ so that, for any $d$-variable rational function without singularities in $\overline{\Omega}$,  one can recover
$r(T)$ as $r(T) = P_\cH r(\bfU)|_\cH$.
We focus here on the case where $\Omega = {\mathbb E}$, a domain in ${\mathbb C}^3$ called the {\em tetrablock}.
(i) We identify a complete set of unitary invariants for a ${\mathbb E}$-contraction $(A,B,T)$ which can then be used to write down a functional model for $(A,B,T)$,
thereby extending earlier results only done for a special case, 
(ii) we identify the class of {\em pseudo-commutative ${\mathbb E}$-isometries} (a priori slightly larger than the class of ${\mathbb E}$-isometries) to which any
${\mathbb E}$-contraction can be lifted, and (iii) we use our functional model to recover an earlier result on the existence and uniqueness of a ${\mathbb E}$-isometric lift
$(V_1, V_2, V_3)$ of a special type for a ${\mathbb E}$-contraction $(A,B,T)$.
\end{abstract}

\dedicatory{\textit{Dedicated to the memory of J\"org Eschmeier: a fine mentor, researcher, and colleague}}

\maketitle

\section{Introduction}
Suppose that we are given  a commutative tuple $\bfT = (T_1, \dots, T_d)$ of operators  on a Hilbert space $\cH$ together with a bounded domain $\Omega$ contained
in $d$-dimensional Euclidean space ${\mathbb C}^d$.   We now recall the notion of $\Omega$ being a {\em spectral set} and $\Omega$ being a {\em complete spectral
set} for the commutative $d$-tuple $\bfT$ and refer to the original paper of Arveson \cite{Arveson} for additional details on this and the related matters which follow.
We say that  $\Omega$ is a {\em spectral set} for $\bfT$ if it is the case that
\begin{equation}   \label{sp-ineq}
    \| r(\bfT) \|_{_{\cB(\cH)}} \le \sup_{\lam \in \Omega} |r(\lam)| \text{ for } r \in \operatorname{Rat}(\Omega)
\end{equation}
where we set $R(\Omega)$ equal to the space of all $d$-variable scalar-valued rational functions $r$ having no singularities in $\overline{\Omega}$
and $\cB(\cH)$ equal to the Banach algebra of all bounded linear operators on $\cH$. Here  $r(\bfT)$ can be defined via the functional calculus given by
$$
r(\bfT) = p(T_1, \dots, T_d) q(T_1, \dots, T_d)^{-1}
$$ 
where $(p,q)$ is a coprime pair of $d$-variable polynomials such that $r = p/q$. 
 In analogy with what happens for the case $\Omega$ equal to
the unit disk ${\mathbb D}$ (see the discussion below), we say simply that $\bfT$ is an $\Omega$-contraction if it is the case that $\Omega$ is a spectral set
for $\bfT$.

We say that $\Omega$ is a {\em complete spectral set} for $\bfT$ if \eqref{sp-ineq} continues to hold when one substitutes matrix rational functions  $R(\lam)
= [ r_{ij}(\lam)]_{i,j=1, \dots, n} = [ p_{ij}(\lam) q_{ij}(\lam)^{-1}]$ having no singularities in $\overline{\Omega}$:
$$
\| R(\bfT) \|_{_{\cB(\cH^n)}} \le \sup_{\lam \in \Omega} \| R(\lam) \|_{_{{\mathbb C}^{n \times n}}}.
$$
The seminal result of Arveson (see \cite{Arveson}) is that $\Omega$ is a complete spectral set for $\bfT$ if and only if there is a commutative $d$-tuple of normal
operators $\bN = (N_1, \dots, N_d)$ on a larger Hilbert space $\widetilde \cK \supset \cH$  with joint spectrum contained in the distinguished boundary
$b\Omega$ of $\Omega$ (in which case we say that $\bN$ is a  {\em $\Omega$}-unitary for short)
so that, for any rational function $r$ with no singularities in $\overline\Omega$ as above, 
it is the case that $r(\bfT)$ on $\cH$ can be represented as the compression of $r(\bN)$ to $\cH$, i.e.,
$$
   r(\bfT) = P_\cH r(\bN) |_\cH
$$
where $P_\cH$ is the orthogonal projection of $\cK$ onto $\cH$.   It is easy to see that a necessary condition for $\Omega$ to be a complete spectral set
for a given operator $d$-tuple $\bfT$ is that $\Omega$ be a spectral set for $\bfT$.  The {\em rational dilation problem} for a given domain $\Omega$ is to determine
if the converse holds:  {\em given $\Omega$, is it always the case that an operator tuple $\bfT$ having $\Omega$ as a spectral set in fact has $\Omega$
as a complete spectral set} (and hence then any $\bfT$ having $\Omega$ as a spectral set has a $d\Omega$-normal dilation $\bN$)?

Let us mention that it is often convenient to reformulate the problem of existence of an {\em $\Omega$-unitary dilation} instead as the problem of
existence of a {\em $\Omega$-isometric lift} (see e.g.~the introduction of \cite{BS-failure}). Here we say that the operator tuple $\bfV = (V_1, \dots, V_d)$ 
on a Hilbert space $\cK$ is a {\em $\Omega$-isometry} if $\bfV$ extends to a $\Omega$-unitary operator tuple $\bfU = (U_1, \dots, U_d)$ 
on a Hilbert spaces $\widetilde \cK \supset \cK$.  We say that $\bfV = (V_1, \dots, V_d)$ on $\cK$  is a {\em lift} of $\bfT = (T_1, \dots, T_d)$ on $\cH$ if
$\cH \subset \cK$ and $r(\bfV)^*|_\cH =r(\bfT)^*$ for $r \in \operatorname{Rat}(\Omega)$, or equivalently, $\bfV$ is a {\em coextension} of $\bfT$  in the sense that
$$
P_\cH r(\bfV)|_\cH = r(\bfT) \text{ and } r(\bfV)\cH^\perp \subset \cH^\perp \text{ for } r \in \operatorname{Rat}(\Omega).
$$
It suffices to consider only {\em minimal} $\Omega$-unitary dilations and {\em minimal} $\Omega$-isometric lifts.  It is always the case that the restriction
of a $\Omega$-unitary dilation to the subspace $\bigvee_{r \in \operatorname{Rat}(\Omega)} r({\mathbf U}) \cH$ gives rise to a minimal
$\Omega$-isometric lift, and conversely, the minimal $\Omega$-unitary extension of a minimal $\Omega$-isometric lift gives rise to a minimal $\Omega$-unitary
dilation for $\bfT$.  Finally we point out that it is often convenient to be more flexible in the definition of an $\Omega$-isometric lift and  of an $\Omega$-unitary dilation
by not insisting that $\cH$ is a subspace of $\cK$ or $\widetilde \cK$ but rather allow an isometric identification map $\Pi \colon \cH \to \cK$
and $\widetilde \Pi \colon \cH \to \widetilde \cK$.  Thus we say that the pair $(\Pi, \bfV)$ is an an $\Omega$-isometric lift for $\bfT$ on $\cH$ if
$\Pi \colon \cH \to \cK$ is an isometric embedding, $\bfV$ is $\Omega$-isometric on $\cK$ and $r(\bfV)^* \Pi = \Pi r(\bfT)^*$ for $r \in \operatorname{Rat}(\Omega)$,
while $(\widetilde \Pi, \bfU)$ is a $\Omega$-unitary dilation of $\bfT$ if $\widetilde \Pi \colon \cH \to \widetilde \cK$ is an isometric embedding,
$\bfU$ is $\Omega$-unitary on $\widetilde \cK$, and $\Pi^* r(\bfU) \Pi = r(\bfT)$ for $ r \in \operatorname{Rat}(\Omega)$.

The motivating classical example for this setup is the case where $\Omega$ is the unit disk ${\mathbb D} \subset {\mathbb C}$.  In this case, the distinguished
boundary $b{\mathbb D}$ of ${\mathbb D}$ is the same as the boundary $\partial {\mathbb D}$ which is the unit circle ${\mathbb T}$
and a $b {\mathbb D}$-normal operator is just a unitary operator.  Since ${\mathbb D}$ is polynomially convex, it suffices to work with polynomials
rather than rational functions with no poles in $\overline{\mathbb D}$. By choosing the polynomial $p$ to be $p = \chi$ and
$\chi(\lam) = \lam$, we see that $\| T \| \le 1$ (i.e., that $T$ be a contraction) is necessary for ${\mathbb D}$ to be a spectral set for $T$.  The fact that
this condition is also sufficient, i.e., that the inequality
$$
   \| p(T) \| \le \sup_{\lam \in {\mathbb D}} |p(\lam)| 
$$
holds for any contraction operator $T$ and polynomial $p$,
is a classical inequality known as von Neumann's inequality going back to  \cite{vonN-Wold}.  to show that ${\mathbb D}$ is a complete spectral set for any
contraction operator $T$, we may use the easier side of Arveson's theorem and show instead that any contraction operator $T$ has a ${\mathbb D}$-unitary dilation. 
But for the case $\Omega = {\mathbb D}$, according to our conventions, a  ${\mathbb D}$-unitary operator is just a unitary operator
$U$ (i.e., $U^*U = U U^* = I_{\widetilde \cK}$).  But any contraction operator $T$ on $\cH$ dilating to a unitary operator $U$ on $\widetilde \cK \supset \cH$
is exactly the content of the Sz.-Nagy dilation theorem (see \cite[Chapter II]{Nagy-Foias}). 

Over the ensuing decades there have been sporadic attempts to find other domains (both contained in ${\mathbb C}$ or more generally contained in ${\mathbb C}^d$)
for which one can settle the rational dilation question one way or the other (i.e., positively or negatively).  Among single-variable domains (as observed in the
introduction of \cite{BS-failure} where precise references are given), it is known that  rational dilation holds if $\Omega \subset {\mathbb C}$ is a simply connected domain 
(simply use a conformal map to reduce to the disk case) or is doubly-connected, but fails if $\Omega$ has two or more holes (see \cite{AHR_Memoir, DM_JAMS}). 
As for multivariable domains,
perhaps the first class to be understood are the polydisks ${\mathbb D}^d$ with $d \ge 2$: for $d = 2$ rational dilation holds due to the And\^o dilation theorem
\cite{ando} while for $d \ge 3$ rational dilation fails (see \cite{Parrott, Varopoulos}). 

More recently the rational dilation problem has been investigated for other concrete multivariable domains originally discovered due to connections
with the $\mu$-synthesis problem in Robust Control Theory (see the original Doyle-Packard paper \cite{DoylePackard} as well as the book \cite{DP} for
a more expository treatment).   We mention in particular the symmetrized bidisk
\begin{equation}  \label{symdisk}
 \Gamma = \{ (s, p) \in {\mathbb C}^2 \colon s = (\lam_1 + \lam_2), \, p  = \lam_1 \lam_2 \text{ for some } (\lam_1, \lam_2) \in {\mathbb D}^2 \}
 \end{equation}
 and a domain in ${\mathbb C}^3$ called the tetrablock and denoted by ${\mathbb E}$:
 \begin{equation}  \label{tetrablock}
\mathbb E:=\left\{(a,b,\text{det}X): X=\begin{bmatrix} a & a' \\ b' & b \end{bmatrix}\text{ with }\lVert X \rVert <1\right\}.
\end{equation}
 As might be expected,  the domain $\Gamma$ behaves like ${\mathbb D}^2$ with respect to the rational dilation problem as both domains are contained in
 ${\mathbb C}^2$: specifically, rational dilation holds for the domain $\Gamma$ (see \cite{AglerYoung00, AglerYoung03, B-P-SR}) and there is a functional model
 analogous to the Sz.-Nagy-Foias model for the disk case (see \cite{AglerYoung03,  BPJOT}), at least for the pure case.
 The situation of the rational dilation problem for the tetrablock ${\mathbb E}$ is less clear:  there is a sufficient and a necessary condition for the existence of a 
 ${\mathbb E}$-isometric lift of a certain form \cite{Tirtha14, BS-failure} but a definitive resolution of the problem in full generality remains elusive 
 (see \cite{BS-failure, PalFailure}). 
However it is shown in \cite{SauNYJM} that, at least in the pure case, it is still possible to construct a functional representation of a pure
$\Gamma$-contraction as the compression to $\cH$ of a certain lift triple $(A_\ell, B_\ell, T_\ell)$ which formally looks like an tetrablock isometry but is not
guaranteed to satisfy all of the required commutativity conditions. 
A similar phenomenon holds for the case where $\Omega = {\mathbb D}^d$ with $d \ge 2$ (see \cite{BallSauDougVol}): for this case, as pointed out above,
there are indeed counterexamples to show that rational dilation fails, but there is nevertheless a weaker type of  lift (called {\em pseudo-commutative
${\mathbb D}^d$-isometric lift)}) which generates a functional model for the given ${\mathbb D}^d$-contractive $d$-tuple $\bfT = (T_1, \dots, T_d)$ even when
rational dilation fails.

In this paper we focus on the case $\Omega = {\mathbb E}$.  As was the case in \cite{Tirtha14}, the most definitive results are for the case of what we shall call
a \textbf{special tetrablock contraction},  i.e., a tetrablock contraction $(A,B,T)$ which has a tetrablock isometric lift $(V_1, V_2, V_3)$ such that $V_3 = V$
is a Sz.-Nagy-Foias minimal isometric lift for the single contraction operator $T$.  As in \cite{Tirtha14}, we identify the additional commutativity conditions
\eqref{com=} which must be imposed on the Fundamental Operator pair $(G_1, G_2)$ of $(A^*,B^*,T^*)$ which characterizes when $(A,B,T)$ is special.
There results a Douglas-type functional model (as in \cite{Doug-Dilation} for the single contraction operator setting) for the tetrablock contraction
which also exhibits the tetrablock isometric lift $(V_1, V_2, V_3)$, all in a functional-model form rather than via block-matrix constructions as in \cite{Tirtha14}.
This Douglas-type model can in turn be converted to a Sz.-Nagy-Foias-type model; the Sz.-Nagy-Foias characteristic function $\Theta_T$ for the
contraction operator $T$, together with the the fundamental operators $(G_1, G_2)$ for the adjoint tetrablock contraction $(A^*, B^*, T^*)$, along
with some additional information needed to handle the case where $T$ is not a pure contraction, form what we call a {\em characteristic tetrablock data set}
for $(A,B,T)$ in terms of which one can write down the functional model.  Conversely, we identify a collection of objects which we call a 
{\em special tetrablock data set}:
specifically,  (i)  a pure contractive operator function $(\cD, \cD_*, \Theta)$, (ii) a pair of operators $(G_1, G_2)$ on the coefficient space $\cD_*$,
(iii) a tetrablock unitary $(R,S,W)$ acting on $\overline{D_\Theta  \cdot L^2(\cD)}$, such that (iv) all these together satisfy a natural invariant-subspace compatibility
condition.  From such a characteristic tetrablock data set we construct a functional model  such that the embedded functional-model operator triple is the most 
general special tetrablock contraction up to unitary equivalence, with its special tetrablock isometric lift also embedded in the functional model.
We also are careful to push the theory as far as we can without the assumption that the original tetrablock contraction is special.  In this case
we identify a class of operator triples $(V_1, V_2, V_3)$ with $V_3$ equal to a minimal isometric lift for $T$ to which $(A,B,T)$ 
can be lifted: here $V_!$ and $V_2$ commute with $V_3$ but not necessarily with each other and it appears that $V_1$, $V_2$ need not be contractions.
In this case there is no converse direction:  there is no guarantee that the compression of a general pseudo-commutative tetrablock isometry $(V_1, V_2, V_3)$
on $\cK$ back to $\cH$ will yield a tetrablock contraction.  

Let us note that the recent paper of Bisai and Pal \cite{BisaiPal21} contains closely related results.  These authors basically compute the $Z$-transform of 
 the Sch\"affer-type construction of the unique special tetrablock isometric lift $(V_1, V_2, V_3)$ (where $V_3$ is equal to the minimal Sz.-Nagy isometric lift
 of $T$) to arrive at a functional model for this lift.  Our approach on the other hand
 uses the Douglas lifting approach to construct the functional model directly with the existence and uniqueness of the special tetrablock isometric lift 
 falling out as part of the construction.  When the tetrablock contraction is not special and no such lift is possible, the same construction still leads
 to a functional model but $(V_1, V_2, V_3)$ is only a pseudo-commutative tetrablock isometry and there is no tetrablock isometric lift constructed in this way.
 The results for the special case arise as a special case (the case where the Fundamental Operator pair $(G_1, G_2)$ for the tetrablock contraction $(A^*,B^*,T^*)$ 
 satisfy the additional commutativity conditions \eqref{com=}) of the general functional-model construction.  The paper \cite{BisaiPal21} also obtains a 
 noncommtative functional model for a non-special case, based on the work of Durszt \cite{Durszt} (a variation of the approach of Douglas for the construction
 of the minimal isometric lift for the case of a single contraction operator $T$), but with the additional hypothesis that $A$ and $B$ commute not only with $T$
 but also with $T^*$.  It is clear that the complete unitary invariant for a pure tetrablock contraction $(A,B,T)$ consists of the characteristic function $\Theta_T$ of $T$
 together with the Fundamental Operator pair $(G_1, G_2)$ of $(A^*, B^*, T^*)$; for the non-pure case (where $\Theta_T$ is no longer inner) we add
 a certain tetrablock unitary $(R,S,W)$ acting on $\overline{\Delta_T H^2(\cD_T) }$ which is part of our model (see Theorem \ref{Thm:CompUniInv} below),
 while Bisai-Pal add the  Fundamental Operator pair $(F_1, F_2)$ for $(A,B,T)$ and argue that $(\Theta_T, (F_1, F_2), G_1, G_2))$ is a complete unitary
 invariant.   It remains to be seen which is the more relevant and useful in the future.

It is now becoming clear that the domains ${\mathbb D}^d$ (polydisk), $\Gamma$ (symmetrized bidisk), ${\mathbb E}$ (tetrablock) as well as ${\mathbb D}^d_s$
(symmetrized polydisk) all have common features with respect to the associated operator theory and applications to the rational dilation problem 
for each of these domains. The paper \cite{BallSauDougVol} shows how a program completely parallel to that done here for the tetrablock case
can be worked out equally well  for the polydisk case $\Omega = {\mathbb D}^d$ (where rational dilation is known to fail when $d \ge 3$). 
In all these settings, there appear a  pair of unitary invariants called Fundamental Operators which play a key role  as part of a set (including the Sz.-Nagy--Foias
characteristic function of an appropriate contraction operator determined by the operator tuple)  of unitary invariants for the operator tuple of whatever class.
The notion of Fundamental Operators as a fundamental object of interest seems to have appeared first in connection with the symmetrized bidisk
$\Gamma$ \cite{BPJOT}, then in connection with the un-symmetrized polydisk \cite{sauAndo,  BallSauDougVol}, and now also in connection with the
symmetrized polydisk (see \cite{Pal21}).
Often the proper notion of Fundamental Operators for one setting is found by making a 
correspondence of the less understood setting with some other better understood setting,  and then adapting definitions for the first to become
definitions for the second.  In particular, many of the results for the tetrablock case were originally found by adapting from results for the
symmetrized bidisk case (see e.g.~\cite{Tirtha14}), and it has been shown how one can deduce the bidisk functional model from the tetrablock
functional model (see \cite{sauAndo}).  In this spirit in a future publication we plan to show  how the results
from \cite{BallSauDougVol} for the polydisk case (most of which are just statements parallel to what is done here for the tetrablock case)
 can alternatively be derived as a corollary of the corresponding results for the tetrablock case via the simple observation:
if $\bfT = (T_1, \dots, T_d)$ is a commutative, contractive operator $d$-tuple, then for $1 \le i \le d$, if we set $T_{(i)} = \Pi_{1 \le j \le d \colon j \ne i} T_j$, then for each
$i = 1, \dots, d$ the $d$-tuple $(A_i, B_i, P) = (T_i, T_{(i)}, \Pi_{1 \le j \le d} T_j)$ is a tetrablock contraction; the $d=2$ case can be found in \cite[Section 3, Version 3]{sauAndo}.

Finally,  let us point out that
it is possible to reformulate the rational dilation problem for a given domain $\Omega$ as a problem about unital representations of a unital function algebra:  
given a contractive representation
$\pi \colon f \in \cA \mapsto \pi(f) \in \cB(\cH)$ which is contractive ($\| \pi(f) \|_{\cB(\cH)}  \le \| f \|_{\cB(\cH)}$ where the unital representation property is that
$\pi(1_\cA) = I_\cH$ and $\pi(f_1 \cdot f_2) = \pi(f_1) \pi(f_2)$, is it automatically the case that the representation is {\em completely contractive}, i.e.,
still contractive after tensoring with ${\mathbb C}^{n \times n}$ for any $n \in {\mathbb N}$?  To recover the original formulation as a special case,
one can take $\cA = \overline{\operatorname{Rat}(\Omega)}$ where the closure is in the $C^*$-algebra $C(b\Omega)$
(continuous functions on the distinguished boundary $b \Omega$).  However with this more general formulation one can consider function algebras
which go beyond $\overline{\operatorname{Rat}({\Omega})}$, e.g.,  the
constrained subalgebra ${\mathbb C} \cdot 1 + z^2 {\mathbb A}({\mathbb D})$ of the disk algebra ${\mathbb A}({\mathbb D}) =
\overline{\operatorname{Rat}({\mathbb D})}$.  Alternatively, it is often possible to represent the algebra $\cA$ as conformally equivalent to the algebra of all 
functions analytic on some algebraic curve $\Omega = {\mathbf C}$ embedded in some higher-dimensional closed complex manifold (the Neil
parabola  intersected with the bidisk for the case of ${\mathbb C} \cdot 1 + z^2 {\mathbb A}({\mathbb D})$).  
For the state of knowledge (up to 2018) on this direction of dilation theory
including much discussion and references on earlier work, we refer to the paper of Dritschel and Undrakh \cite{DU}.  We shall not pursue this direction here.

The paper is organized as follows.  After the present Introduction, in Section 2  we collect assorted definitions and illustrative results concerning
tetrablock contractions, tetrablock isometries, and tetrablock unitaries, including a direct proof of the existence of the Fundamental Operator pair
for a given tetrablock contraction, which will be needed in the sequel.  Here we also show how to associate a tetrablock unitary $(R,S,W)$ with a
tetrablock contraction $(A,B,T)$ in a canonical way; this is the key ingredient needed to eliminate the {\em purity} assumption on the contraction operator
$T$ required in earlier work on this problem (see \cite{SauNYJM}).
Section 3 shows how a lifting framework for the tetrablock-contraction setting can be constructed as an embellishment of the Douglas-model lifting framework
\cite{Doug-Dilation} originally formulated as an approach to the Sz.-Nagy dilation theorem for a single contraction operator $T$, with the
pseudo-commutative tetrablock-isometric lift $(V_1, V_2, V_3)$ having $V_3 = T$ and $V_1$ and $V_2$ constructed by making use of the
Fundamental Operator pair for the tetrablock contraction $(A^*, B^*, T^*)$.  The final Section 4 identifies the invariants required to write down a functional
model equipped with a model operator triple $(A,B,T)$ which is concrete functional-model version of a general tetrablock contraction.

\section{The fundamentals of tetrablock contractions}
This section gives a brief introduction to the operator theory associated with the tetrablock.

\subsection{Tetrablock contractions}
The {\bf tetrablock}, denoted by ${\mathbb E}$, is the  non-convex but polynomially convex domain in $\mathbb C^3$ given by \eqref{tetrablock}.  From this formula 
for ${\mathbb E}$ is is easy to read off the following symmetry properties.

\begin{proposition}  \label{P:symmetries}
The tetrablock ${\mathbb E}$ has the following symmetry properties:
\begin{enumerate}
\item ${\mathbb E}$ is invariant under complex conjugation:
$$
(a,b,t) \in {\mathbb E} \Leftrightarrow  (\overline{a}, \overline{b}, \overline{t}) \in {\mathbb E}.
$$
\item ${\mathbb E}$ is invariant under interchange of the first two coordinates:
$$
(a,b,t) \in {\mathbb E}  \Leftrightarrow (b,a,t) \in {\mathbb E}.
$$
\end{enumerate}
\end{proposition}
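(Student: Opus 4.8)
The plan is to read both statements directly off the defining formula \eqref{tetrablock} for ${\mathbb E}$, using the two facts that the constraint ``$\|X\|<1$'' on a $2\times 2$ matrix $X$ is preserved (i) under entrywise complex conjugation of $X$, and (ii) under conjugating $X$ by a permutation matrix, while in each case it is transparent what happens to the two diagonal entries of $X$ and to $\det X$.

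For item (1), suppose $(a,b,t)\in{\mathbb E}$, say $t=\det X$ with $X=\sbm{a & a'\\ b' & b}$ and $\|X\|<1$. I would pass to the entrywise conjugate $\overline X=\sbm{\overline a & \overline{a'}\\ \overline{b'} & \overline b}$. Its $(1,1)$ and $(2,2)$ entries are $\overline a$ and $\overline b$, and $\det\overline X=\overline{\det X}=\overline t$, so it only remains to see $\|\overline X\|<1$. This is the one point worth a sentence: $(\overline X)^{*}\overline X=\overline{X^{*}X}$, and the positive semidefinite matrix $X^{*}X$ has real spectrum, hence the same spectrum as its conjugate, so $\|\overline X\|^{2}=\|X^{*}X\|=\|X\|^{2}<1$. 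Therefore $(\overline a,\overline b,\overline t)\in{\mathbb E}$; the reverse implication is the same argument with $\overline X$ in place of $X$.

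For item (2), with $(a,b,t)\in{\mathbb E}$ and $X$ as above, I would conjugate by the flip matrix $J=\sbm{0 & 1\\ 1 & 0}$, which is unitary, to obtain $Y:=JXJ=\sbm{b & b'\\ a' & a}$. Then $\|Y\|=\|X\|<1$ since $J$ is unitary, $\det Y=\det X=t$, and the diagonal entries of $Y$ are $b$ and $a$ in that order. Hence $(b,a,t)\in{\mathbb E}$, and the converse follows by interchanging the names $a$ and $b$.

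Neither verification presents a genuine obstacle; the argument is essentially a bookkeeping check. The only step deserving explicit mention is the invariance of the operator norm under the two operations above, i.e.\ the identities $\|\overline X\|=\|X\|$ and $\|JXJ\|=\|X\|$, both of which hold because these operations leave the singular values of $X$ unchanged.
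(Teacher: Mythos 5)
Your proof is correct and is exactly the verification the paper has in mind: the paper offers no written proof, simply asserting that both symmetries are ``easy to read off'' from the defining formula \eqref{tetrablock}, and your argument supplies the details (entrywise conjugation and conjugation by the flip matrix $J$ preserve $\|X\|<1$, act as expected on the diagonal entries, and send $\det X$ to $\overline{\det X}$ and $\det X$ respectively). The one step you rightly flagged, $\|\overline{X}\|=\|X\|$ via the real spectrum of $X^{*}X$, is handled correctly.
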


The distinguished boundary of ${\mathbb E}$, i.e., the \v Silov boundary with respect to the algebra of functions that are analytic in $\bE$ and 
continuous on $\overline{\bE}$, is given by
$$
b\bE:=\left\{(a,b,\text{det}X): X=\begin{bmatrix} a & a' \\ b' & b \end{bmatrix}\text{ is a unitary }\right\}
$$
(see \cite[Theorem 7.1]{awy07}).  From this characterization it is easy to see that $b{\mathbb E}$ is also invariant under the two involutions
$(a,b,t) \mapsto (\overline{a}, \overline{b}, \overline{t})$ and $(a,b,t) \mapsto (b,a, t)$.

 Several tractable characterizations of the tetrablock can be found in \cite[Theorem 2.2]{awy07}; 
we pick two of these that will be used in what follows.

\begin{thm}\label{T:CharacTetra} For a point $(a,b,t)\in\bC^3$, the following are equivalent:
\begin{itemize}
    \item[(i)] $(a,b,t)\in\bE;$
    \item[(ii)] with the rational function $\Psi:\overline{\bD}\times\bC^3\to\bC$ defined as
    \begin{align}\label{Psi}
        \Psi(z,(a,b,t))=\frac{a-z t}{1-z b},
    \end{align}$\sup_{z\in\overline{\bD}}|\Psi(z,(a,b,t))|<1$; and if $ab=t$ then, in addition, $|b|<1$;
    \item[(iii)] with $\Psi$ as in \eqref{Psi}, $\sup_{z\in\overline{\bD}}|\Psi(z,(b,a,t))|<1$; and if $ab=t$ then, in addition, $|a|<1$.
\end{itemize}Moreover, when item (i) is replaced by $(a,b,t)\in\overline{\bE}$, then all the strict inequalities in items (ii) and (iii) are replaced by non-strict inequalities. 
\end{thm}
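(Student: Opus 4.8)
The plan is to deduce all the equivalences from the matrix-ball definition \eqref{tetrablock} of $\bE$ together with a transfer-function reading of the rational function $\Psi$ in \eqref{Psi}. The starting point is the elementary identity
$$
\Psi(z,(a,b,t))\;=\;a+z\,a'(1-zb)^{-1}b',\qquad\text{where }\ a'b'=ab-t,
$$
which holds for every $2\times 2$ matrix $X=\sbm{a&a'\\ b'&b}$ with $\det X=t$; in particular the right-hand side depends only on $(a,b,t)$, consistent with the notation, and it exhibits $\Psi(\,\cdot\,,(a,b,t))$ as the transfer function of the colligation $X$ with the second coordinate as the state space. Granting this, the implication (i)$\Rightarrow$(ii) is the easy half: if $\|X\|=\rho<1$ realizes $(a,b,t)$, scale $X\mapsto X/\rho$ and use $\Psi(z,(a,b,t))=\rho\,\Psi(\rho z,(a/\rho,b/\rho,t/\rho^2))$ together with the standard fact that the transfer function of a contractive colligation lies in the scalar Schur class, to get $\sup_{\overline{\bD}}|\Psi(\,\cdot\,,(a,b,t))|\le\rho<1$; the side requirement $|b|<1$ is immediate from $|b|\le\|X\|$. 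The same computation with non-strict inequalities gives the corresponding half of the ``moreover'' statement, once one observes that $\overline{\bE}$ is exactly the set of triples $(X_{11},X_{22},\det X)$ with $\|X\|\le 1$ (a routine compactness argument).

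For the reverse implication I would first record the purely algebraic reformulation
$$
(\star)\qquad (a,b,t)\in\bE\iff |t|<1\ \text{ and }\ |a|^2+|b|^2+2|ab-t|<1+|t|^2,
$$
which follows straight from \eqref{tetrablock}: when $ab\neq t$ write $a'=s$ and $b'=(ab-t)/s$, so that $\det(X_s^*X_s)=|t|^2$ (independent of $s$) and $\operatorname{tr}(X_s^*X_s)=|a|^2+|b|^2+|s|^2+|ab-t|^2/|s|^2$, whose infimum over $s\neq0$ is $|a|^2+|b|^2+2|ab-t|$; since a positive $2\times2$ matrix $M$ has $\|M\|<1$ exactly when $\operatorname{tr}M<1+\det M$ and $\det M<1$, and since for fixed determinant $\|X_s\|$ is an increasing function of $\operatorname{tr}(X_s^*X_s)$, optimizing over $|s|$ gives $(\star)$; the degenerate case $ab=t$ is handled by a diagonal $X$ and reads ``$|a|<1$ and $|b|<1$''.

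It then remains to identify condition (ii) with $(\star)$. When $ab=t$ this is immediate, since $\Psi(\,\cdot\,,(a,b,t))\equiv a$ and (ii) then says exactly ``$|a|<1$ and $|b|<1$''. When $ab\neq t$, finiteness of $\sup_{\overline{\bD}}|\Psi|$ already forces $|b|<1$ (otherwise $1-zb$ vanishes in $\overline{\bD}$ while $a-zt$ does not), so $\Psi$ extends holomorphically past $\overline{\bD}$ and $\sup_{\overline{\bD}}|\Psi|=\sup_{|z|=1}|\Psi|$; expanding $|a-zt|^2\le M^2|1-zb|^2$ on $|z|=1$ and choosing the worst phase reduces ``$\sup_{|z|=1}|\Psi|\le M$'' to the single inequality $2|\overline{a}t-M^2b|\le M^2(1+|b|^2)-|a|^2-|t|^2$, whose left side is convex and right side affine in $M^2$, so it holds precisely once $M^2$ is large enough. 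Taking $M=1$ turns (ii) into an explicit scalar inequality, which an elementary optimization identifies with $(\star)$; this settles (ii)$\Leftrightarrow$(i), and the ``moreover'' half is obtained verbatim with all inequalities non-strict. Finally (i)$\Leftrightarrow$(iii) is free: (iii) is precisely (ii) read at $(b,a,t)$, and by Proposition~\ref{P:symmetries}(2) membership in $\bE$ (and in $\overline{\bE}$) is invariant under $(a,b,t)\mapsto(b,a,t)$, so (iii) for $(a,b,t)$ $\Leftrightarrow$ (ii) for $(b,a,t)$ $\Leftrightarrow$ (i) for $(b,a,t)$ $\Leftrightarrow$ (i) for $(a,b,t)$.

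I expect the main obstacle to be the step (ii)$\Rightarrow$(i): producing the (strictly) contractive completion $X$ with prescribed diagonal $(a,b)$ and prescribed determinant $t$ out of the scalar sup-norm bound on $\Psi$. The transfer-function picture keeps the forward direction and all the bookkeeping transparent, but turning $\sup_{\overline{\bD}}|\Psi|<1$ into a genuine strict contraction requires the explicit boundary optimization above (or, alternatively, a careful minimal-realization argument for the rational Schur function $\Psi$ followed by a perturbation to recover strictness), and one must always track separately the degenerate locus $ab=t$, where $\Psi$ carries no information about $b$ and the extra hypothesis $|b|<1$ is genuinely needed.
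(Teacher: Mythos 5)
First, a point of reference: the paper does not prove Theorem \ref{T:CharacTetra} at all --- it is imported verbatim from \cite[Theorem 2.2]{awy07}, with only the remark that (i)$\Leftrightarrow$(iii) follows from (i)$\Leftrightarrow$(ii) by the symmetry $(a,b,t)\mapsto(b,a,t)$ (your last paragraph reproduces exactly that observation). So the comparison is really with Abouhajar--White--Young. Your overall architecture is sound and several steps are carried out correctly and cleanly: the realization $\Psi(z,(a,b,t))=a+za'(1-zb)^{-1}b'$ with $a'b'=ab-t$ is right and gives (i)$\Rightarrow$(ii) via the Schur-class property of transfer functions of contractive colligations (the scaling trick handles strictness); the derivation of $(\star)$ by minimizing $\operatorname{tr}(X_s^*X_s)$ over completions at $|s|^2=|ab-t|$ is correct, and it even produces the explicit contractive completion once $(\star)$ is known; and the reduction of $\sup_{|z|=1}|\Psi|\le M$ to $2|\overline{a}t-M^2b|\le M^2(1+|b|^2)-|a|^2-|t|^2$ is a correct computation.

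The gap is precisely where you park it: the sentence ``Taking $M=1$ turns (ii) into an explicit scalar inequality, which an elementary optimization identifies with $(\star)$'' asserts, without proof, the equivalence (for $ab\ne t$, with the side conditions $|b|<1$ resp.\ $|t|<1$) of
$$
P:=1+|b|^2-|a|^2-|t|^2> 2|\overline{a}t-b|=:2u
\qquad\text{and}\qquad
Q:=1+|t|^2-|a|^2-|b|^2> 2|ab-t|=:2v,
$$
and this is the actual content of (ii)$\Leftrightarrow$(i); it is not an optimization but an algebraic equivalence of two of the AWY metric characterizations of $\bE$. It is true, and the key to closing it is the identity
$$
P^2-4u^2\;=\;Q^2-4v^2,
$$
which follows from $u^2-v^2=(1-|a|^2)(|b|^2-|t|^2)$ together with $P+Q=2(1-|a|^2)$ and $P-Q=2(|b|^2-|t|^2)$. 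Granting it, $P>2u\ge 0$ forces $Q^2>4v^2$, but one must still rule out $Q<0$ and deduce $|t|<1$ (and symmetrically in the converse direction); this requires a short case analysis using the consequences $1+|b|>|a|+|t|$ and $1-|b|>\big||a|-|t|\big|$ of $P>2u$ and $|b|<1$, which combine to contradict $|a|+|b|\ge 1+|t|$. Without this the chain (ii)$\Rightarrow(\star)\Rightarrow$(i) is incomplete, and the non-strict (``moreover'') version needs the same bookkeeping with equalities allowed. Everything else in your write-up I would accept as is.
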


\begin{remark}  \label{R:symmetry} Note that in Theorem \ref{T:CharacTetra},  the equivalence of (i) $\Leftrightarrow$ (iii) is an immediate consequence of the equivalence of 
(i) $\Leftrightarrow$ (ii) in view of the
invariance of ${\mathbb E}$ under the involution $(a,b,t) \mapsto (b,a,t)$.
\end{remark}

Recall the notions of \textbf{tetrablock unitary}, \textbf{tetrablock
isometry} and \textbf{tetrablock contraction} given in the Introduction.  
Several algebraic characterizations of tetrablock isometries and tetrablock unitaries are known; see Theorems 5.4 and 5.7 in \cite{Tirtha14}. 
We recall the ones that are useful for our purposes here.  Here we use the notation 
$r(X)$ for the \textbf{spectral radius} of a Hilbert-space operator $X$.  It is then not difficult to see that the ${\mathbb E}$-symmetries noted in
Proposition \ref{P:symmetries} imply the same symmetries on the respective operator classes (with respect to the class of ${\mathbb E}$-isometries
which requires a little extra care), as noted in the next result. We leave the
easy verification as an exercise for the reader.

\begin{proposition} \label{P:operatorsym}
Suppose that $(A,B,T)$ is a triple of bounded operators on a Hilbert space $\cH$.
Then:
\begin{enumerate}
\item $(A,B,T)$ is a ${\mathbb E}$-contraction $\Leftrightarrow$ $(A^*, B^*, T^*)$ is a ${\mathbb E}$-contraction $\Leftrightarrow$
$(B,A,T)$ is a ${\mathbb E}$-contraction.

\item $(A,B,T)$ is a ${\mathbb E}$-isometry $\Leftrightarrow$ $(B,A,T)$ is a ${\mathbb E}$-isometry.

\item $(A,B,T)$ is a ${\mathbb E}$-unitary $\Leftrightarrow$ $(A^*, B^*, T^*)$ is a ${\mathbb E}$-unitary $\Leftrightarrow$ $(B,A,T)$ is a ${\mathbb E}$-unitary.
\end{enumerate}
\end{proposition}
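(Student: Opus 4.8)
The plan is to reduce each equivalence to the known point-set symmetries of $\mathbb E$ and $b\mathbb E$ recorded in Proposition \ref{P:symmetries} and the paragraph following it, together with the defining inequality \eqref{sp-ineq} and the definitions of $\mathbb E$-isometry and $\mathbb E$-unitary. For item (1), the key observation is that the map $r(a,b,t) \mapsto r(b,a,t)$ is a bijection of $\operatorname{Rat}(\mathbb E)$ onto itself that preserves sup-norms over $\mathbb E$, since $\mathbb E$ is invariant under $(a,b,t)\mapsto(b,a,t)$; hence $\|r(A,B,T)\| = \|\widetilde r(B,A,T)\|$ where $\widetilde r(a,b,t) = r(b,a,t)$, and $\sup_{\mathbb E}|r| = \sup_{\mathbb E}|\widetilde r|$, so $(A,B,T)$ is an $\mathbb E$-contraction iff $(B,A,T)$ is. For the adjoint equivalence, I would use that $r(A,B,T)^* = \bar r(A^*,B^*,T^*)$ where $\bar r$ is the rational function with conjugated coefficients, that $\|X^*\| = \|X\|$, and that $\sup_{\mathbb E}|\bar r| = \sup_{\mathbb E}|r|$ because $\mathbb E$ is invariant under complex conjugation (Proposition \ref{P:symmetries}(1)). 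One small point to check: $\bar r$ again has no singularities on $\overline{\mathbb E}$, which follows since the pole set of $\bar r$ is the complex conjugate of the pole set of $r$ and $\overline{\mathbb E}$ is conjugation-invariant.

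Item (3) follows the same template but with $b\mathbb E$ in place of $\mathbb E$. Recall from the definition that $\bfU$ is $\mathbb E$-unitary iff it is a commuting triple of normal operators with joint spectrum contained in $b\mathbb E$. The swap $(a,b,t)\mapsto(b,a,t)$ and the conjugation $(a,b,t)\mapsto(\bar a,\bar b,\bar t)$ both map $b\mathbb E$ onto itself (noted in the excerpt right after the description of $b\mathbb E$). Commutativity and normality of the triple are preserved under reordering the entries and under taking adjoints of all three entries simultaneously; the joint spectrum of $(B,A,T)$ is the image of the joint spectrum of $(A,B,T)$ under the coordinate swap, and the joint spectrum of $(A^*,B^*,T^*)$ is the complex conjugate of the joint spectrum of $(A,B,T)$. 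Applying the corresponding invariance of $b\mathbb E$ gives both equivalences in (3).

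Item (2) is the one flagged in the excerpt as requiring ``a little extra care,'' and I expect it to be the main obstacle, because the definition of $\mathbb E$-isometry is extrinsic: $(A,B,T)$ on $\cK$ is an $\mathbb E$-isometry iff it extends to an $\mathbb E$-unitary $(U_1,U_2,U_3)$ on some $\widetilde\cK \supseteq \cK$. The plan is: given such an extension $(U_1,U_2,U_3)$ of $(A,B,T)$, the swapped triple $(U_2,U_1,U_3)$ is an $\mathbb E$-unitary on $\widetilde\cK$ by item (3), and it extends $(B,A,T)$ on $\cK$ because $U_1|_\cK = A$, $U_2|_\cK = B$, $U_3|_\cK = T$ and $\cK$ is invariant in the required sense for all three — reordering does not disturb this. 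Hence $(B,A,T)$ is an $\mathbb E$-isometry. The delicate point to verify carefully is that ``extends to'' is the correct relation: one must check that $\cK$ being a common invariant/semi-invariant subspace for $U_1, U_2, U_3$ in the sense making $(A,B,T) = (P_\cK U_1|_\cK, P_\cK U_2|_\cK, P_\cK U_3|_\cK)$ a genuine restriction (not merely a compression) is symmetric under permuting the three operators, which it is since it is a condition imposed separately on each $U_i$. I would spell this out for the swap only, noting that the adjoint symmetry is not claimed for $\mathbb E$-isometries (and indeed would be false, just as a one-variable isometry need not be a coisometry). With these pieces in place each equivalence is a one- or two-line verification, and I would present item (2)'s argument in full while leaving (1) and (3) to the reader, consistent with the ``we leave the easy verification as an exercise'' remark preceding the statement.
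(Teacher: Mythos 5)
Your proposal is correct and follows exactly the route the paper intends: it reduces each equivalence to the point-set symmetries of $\mathbb{E}$ and $b\mathbb{E}$ from Proposition \ref{P:symmetries} (the paper itself leaves this verification as an exercise, flagging only the isometry case as needing extra care, which you handle properly by swapping the entries of the extending $\mathbb{E}$-unitary). The supporting details you check --- that $\operatorname{Rat}(\mathbb{E})$ is stable under the swap and under coefficient conjugation, that $r(A,B,T)^* = \bar r(A^*,B^*,T^*)$, and that the extension relation is preserved coordinatewise under permutation --- are all the right ones, and you correctly note that the adjoint symmetry is not asserted (and fails) for $\mathbb{E}$-isometries.
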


\begin{thm}\label{T:IsoChar}
Let $(A,B,T)$ be a commutative triple of bounded Hilbert space operators. Then the following are equivalent:
\begin{itemize}
    \item[(i)] $(A,B,T)$ is a tetrablock isometry (respectively unitary);
    \item[(ii)] $(A,B,T)$ is a tetrablock contraction and $T$ is an isometry (respectively unitary); 
    \item[(iii)] $A=B^*T$, $B$ is a contraction and $T$ is an isometry (respectively unitary); and
    \item [(iv)] $B=A^*T$, $A$ is a contraction and $T$ is an isometry (respectively unitary).
    \item[(v)] $B = A^* T$, $r(A) \le 1$ and $r(B) \le 1$, and $T$ is an isometry (respectively unitary).
\end{itemize}
\end{thm}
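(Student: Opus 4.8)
The plan is to establish the cycle $(i)\Rightarrow(ii)\Rightarrow(iii)\Rightarrow(i)$, to read off $(iii)\Leftrightarrow(iv)$ from the $\bE$-symmetry $(a,b,t)\mapsto(b,a,t)$ of Proposition \ref{P:operatorsym}, and to treat $(iii)\Leftrightarrow(v)$ separately at the end. The implication $(i)\Rightarrow(ii)$ is immediate: if $(A,B,T)$ is the restriction to an invariant subspace $\cK$ of a tetrablock unitary $(U_1,U_2,U_3)$, then every point of $b\bE$ has third coordinate of modulus one, so $U_3$ is a unitary, whence $T=U_3|_\cK$ is an isometry (a unitary in the respective statement, where $\cK$ reduces $U_3$); and compressing to an invariant subspace never increases norms, so $\|r(A,B,T)\|=\|r(\bfU)|_\cK\|\le\|r(\bfU)\|\le\sup_{\bE}|r|$ for $r\in\operatorname{Rat}(\bE)$, i.e.\ $\bE$ is a spectral set for $(A,B,T)$.

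For $(ii)\Rightarrow(iii)$ I would fix $z\in\bD$ and apply the spectral-set inequality to the functions $\Psi(z,\cdot)$ of \eqref{Psi} and $\Psi(z,(b,a,t))$, which by Theorem \ref{T:CharacTetra} lie in $\operatorname{Rat}(\bE)$ (no pole, since $|a|,|b|\le1$ on $\overline{\bE}$) with supremum at most $1$ there; this yields $(A-zT)^*(A-zT)\le(I-zB)^*(I-zB)$ and $(B-zT)^*(B-zT)\le(I-zA)^*(I-zA)$ for all $z\in\bD$. Taking $z\to0$ gives $\|A\|,\|B\|\le1$. Expanding the first inequality using $T^*T=I$ and setting $F=B-A^*T$, $F'=A-B^*T$, one gets, for each $h$ and each $z\in\bD$, $2\,\mathrm{Re}\big(z\,\la Fh,h\ra\big)\le\la(I-A^*A)h,h\ra-|z|^2\la(I-B^*B)h,h\ra$ and (from the second inequality) the $A\leftrightarrow B$ analogue for $F'$; choosing the argument of $z$ optimally and letting $|z|\to1$ produces $|\la Fh,h\ra|\le\tfrac12\big(\la(I-A^*A)h,h\ra-\la(I-B^*B)h,h\ra\big)$ and $|\la F'h,h\ra|\le\tfrac12\big(\la(I-B^*B)h,h\ra-\la(I-A^*A)h,h\ra\big)$, whose sum forces $\la Fh,h\ra=\la F'h,h\ra=0$ for every $h$. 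Thus $B=A^*T$ and $A=B^*T$, which with $\|B\|\le1$ and $T$ an isometry is $(iii)$; $(iv)$ follows symmetrically. (Once the Fundamental Operator pair of a tetrablock contraction is available, this shortcuts: $D_T=(I-T^*T)^{1/2}=0$ for $T$ an isometry immediately forces $A-B^*T=D_TF_1D_T=0$ and $B-A^*T=D_TF_2D_T=0$.)

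For $(iii)\Rightarrow(i)$ I would Wold-decompose $T=T_u\oplus T_s$ on $\cK=\cK_u\oplus\cK_s$, $T_u$ unitary and $T_s$ a pure shift identified with $M_\zeta$ on some $H^2(F)$; note $B=A^*T$ holds too, since $T^*A=B^*T^*T=B^*$. As $A,B$ commute with $T$, each maps $\cK_u=\bigcap_nT^n\cK$ into itself and is block upper-triangular; substituting into $A=B^*T$, $B=A^*T$ and using invertibility of $T_u$ kills the off-diagonal blocks, so $A=A_u\oplus A_s$, $B=B_u\oplus B_s$. On the unitary summand, $[A_u,B_u]=0$ forces $(B_uB_u^*-B_u^*B_u)T_u=0$, so $B_u$—and then $A_u=B_u^*T_u$—is normal, and (via the joint spectral theorem together with the elementary fact that $a=\bar bt$ with $|t|=1$, $|b|\le1$ is realized as $(a,b,\det X)$ for a unitary $2\times2$ matrix $X$) the joint spectrum lies in $b\bE$, so $(A_u,B_u,T_u)$ is already a tetrablock unitary. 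On the shift summand, $B_s$ commutes with $M_\zeta$, hence $B_s=T_\Psi$ with $\Psi\in H^\infty(\cL(F))$, $\|\Psi\|_\infty\le1$, and $A_s=B_s^*M_\zeta=T_{\zeta\Psi^*}$; since $A_s$ also commutes with $M_\zeta$, the symbol $\zeta\Psi^*$ must be analytic, forcing $\Psi$ to have degree at most one, while $[A_s,B_s]=T_{\zeta(\Psi^*\Psi-\Psi\Psi^*)}=0$ forces $\Psi(\cdot)$ normal-valued a.e.; therefore $M_\Psi$, $M_{\zeta\Psi^*}$ are normal on $L^2(F)$, all three symbols are analytic, and $(M_{\zeta\Psi^*},M_\Psi,M_\zeta)$ is a tetrablock unitary on $L^2(F)$ (joint spectrum in $b\bE$ as before) restricting to $(A_s,B_s,T_s)$ on the invariant subspace $H^2(F)$. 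The direct sum of the two tetrablock unitaries extends $(A,B,T)$; in the respective statement $\cK_s=0$.

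Finally, $(iii)\Leftrightarrow(iv)$ is the $A\leftrightarrow B$ symmetry of Proposition \ref{P:operatorsym}; $(iii)\Rightarrow(v)$ is trivial ($\|A\|=\|B^*T\|\le1$, so $r(A),r(B)\le1$, and $B=A^*T$ as above); and for $(v)\Rightarrow(iii)$ one deduces $A=B^*T$ as above and reruns the block-diagonalization (which used only $A=B^*T$, $B=A^*T$, commutativity, and $T$ isometric): $B_u$ is normal, so $\|B_u\|=r(B_u)\le1$; $B_s=T_\Psi$ is again normal, so $\|B_s\|=r(B_s)\le1$; and $A_s=B_s^*M_\zeta$ is hyponormal—$A_s^*A_s=B_s^*B_s\ge B_s^*(M_\zeta M_\zeta^*)B_s=A_sA_s^*$ by normality of $B_s$, $M_\zeta^*M_\zeta=I$ and $M_\zeta M_\zeta^*\le I$—hence normaloid and $\|A_s\|=r(A_s)\le1$; so $\|A\|,\|B\|\le1$. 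The step I expect to be the main obstacle is $(ii)\Rightarrow(iii)$, where a one-parameter family of operator norm inequalities $\|\Psi(z,(A,B,T))\|\le1$ must be upgraded to the rigid identities $A=B^*T$, $B=A^*T$; the remaining steps are essentially algebra plus the spectral theorem, with the crucial structural input being that an operator commuting with a shift and obeying those relations is a degree-$\le1$ analytic Toeplitz operator.
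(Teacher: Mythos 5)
Most of your argument is sound, and for the equivalence of (i)--(iv) it is a reasonable reconstruction: the paper itself simply cites Theorems 5.4 and 5.7 of \cite{Tirtha14} for those, and your route for (iii) $\Rightarrow$ (i) (Wold decomposition, commutant of the shift, forcing a degree-one analytic symbol, normality of the symbol values from $[A_s,B_s]=0$, and the $2\times 2$ unitary completion of a boundary point $a=\bar b t$) is essentially the same machinery the paper develops later in Theorem \ref{T:WoldPC} and Corollary \ref{C:pc}. Your (ii) $\Rightarrow$ (iii) via the operator inequalities $\|\Psi(z,(A,B,T))\|\le 1$ is also fine (one should first apply the spectral-set inequality to the coordinate polynomials to get $\|B\|\le 1$ so that $(I-zB)^{-1}$ is defined, but that is cosmetic).

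The genuine gap is in (v) $\Rightarrow$ (iii), which happens to be the one implication the paper actually proves itself (as the parenthetical remark at the end of the proof of Theorem \ref{T:pc-vs-strict-Eisom}). You assert that $B_s=T_\Psi$ ``is again normal,'' but an analytic Toeplitz operator is essentially never normal: for $\Psi(\zeta)=\zeta I$ one gets the unilateral shift. Normality of the \emph{symbol values} $\Psi(\zeta)$ a.e.\ on $\mathbb T$ (which you do correctly extract from $[A_s,B_s]=0$) does not transfer to the compression $T_\Psi=P_{H^2}M_\Psi|_{H^2}$, so the identity $\|B_s\|=r(B_s)$ is unjustified, and the subsequent hyponormality computation $A_s^*A_s=B_s^*B_s$ for $A_s=B_s^*M_\zeta$ collapses because it uses that false normality (on $H^2$ one cannot commute $B_s^*$ past $M_\zeta^*$). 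The conclusion you want --- that $r(A)\le 1$, $r(B)\le 1$ upgrade to $\|A\|\le 1$, $\|B\|\le 1$ --- is true, but the working argument is the one in Theorem \ref{T:pc-vs-strict-Eisom}: form $X_1=\sbm{0 & A\\ B & 0}$ and $X_2=\sbm{T&0\\0&T}$, observe $X_1=X_1^*X_2$ with $X_2X_2^*\preceq I$, so $X_1$ is hyponormal and hence normaloid (Stampfli), giving $r(AB)^{1/2}=r(X_1)=\|X_1\|=\max\{\|A\|,\|B\|\}$; then $\sigma(AB)\subseteq\sigma(A)\cdot\sigma(B)$ for the commuting pair $A,B$ yields $r(AB)\le r(A)r(B)\le 1$ and hence $\|A\|,\|B\|\le 1$. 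You should replace your normality claim for $T_\Psi$ by this block-operator argument (or cite Theorem \ref{T:pc-vs-strict-Eisom}(1)--(2) directly).
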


\subsection{Pseudo-commutative tetrablock isometries and unitaries}   \label{SS:pc-tetra-isom/unit}
We propose to introduce the notions of \textbf{pseudo-commutative tetrablock unitary}  and \textbf{pseudo-commutative tetrablock isometry} 
for an operator triple $(A,B,T)$ by using criterion (iii) or equivalently (iv) in Theorem \ref{T:IsoChar} but with the weakening the commutativity hypothesis imposed on the
whole triple $(A,B,T)$ to just the condition that $A$ and $B$ commute with $T$ (but not necessarily with each other).  As we are also dropping the condition
that $A$ or $B$ be a contraction, a more proper term would be {\em noncontractive pseudo-commutative tetrablock isometry}, but, as this term will be
consistent throughout, we settle on the shorter term for brevity.  The resulting definition is as
follows.  We leave it to the reader to verify that the two formulations are equivalent.

\begin{definition} \label{D:pc}
 Let  $(A,B,T)$ be a triple of bounded Hilbert-space operators. We say that the triple
$(A,B,T)$ is a \textbf{pseudo-commutative tetrablock isometry} (respectively, \textbf{unitary}) if any of the following equivalent conditions holds:

\begin{enumerate}
\item  $T$ is an isometry (respectively, unitary) and
\begin{equation}  \label{PseudoIdentity1}
    AT=TA,\quad BT=TB, \quad  A = B^*T.
    \end{equation}

\item $T$ is an isometry (respectively, unitary), and 
\begin{equation} \label{PseudoIdentity2}
AT = TA, \quad BT = TB, \quad B = A^*T.
\end{equation}
\end{enumerate}
\end{definition}

\begin{remark} \label{R:pctetra-vs-tetra}
From Definition \ref{D:pc} and Theorem \ref{T:IsoChar},  we see that any tetrablock isometry/unitary is also a pseudo-commutative tetrablock isometry/unitary
but not conversely.   If we wish to emphasize that we are referring to the logically more special {\em tetrablock isometry/ unitary} rather than the more general
{\em pseudo-commutative tetrablock isometry/unitary},  we often will  say
{\em strict tetrablock isometry/unitary} for emphasis.
\end{remark}

\begin{remark}  \label{R:pseudocom} We now present a couple of elementary observations on pseudo-commutative versus strict tetrablock unitaries
unitaries  which we hope give the reader some additional insight.  

\smallskip
\noindent
(1)  We remark that {\em if $(A,B,T)$ is a pseudo-commutative tetrablock unitary, then $A$ and $B$ are not necessarily normal operators} as would happen
in the strict case. For example, pick a non-normal 
contraction $G_1$ acting on a Hilbert space $\cE$ and consider the triple $(M_{G_1^*},M_{\zeta G_1},M_\zeta)$ on $L^2(\cE)$. It is easy to see that this triple is a 
pseudo-commutative tetrablock unitary. However, neither $A$ nor $B$ is normal unless $G_1$ is so. Also note that if $(A,B,T)$ is a pseudo-commutative tetrablock 
unitary, then so is the adjoint triple $(A^*,B^*,T^*)$. 
This can be seen by observing that the adjoint of the identities in \eqref{PseudoIdentity1} with $T$ unitary can be converted to the identities
\eqref{PseudoIdentity2} for   $(A^*,B^*,T^*)$ with $T^*$ still unitary.    Note next that if $(A,B,T)$ is a pseudo-commutative tetrablock unitary, then
\begin{align*}
&A^*A = T^* B B^* T= B T^* T B^* = B B^*, \\
&  B^*B = T^* A A^* T = A T^* T A^* = A A^*.
\end{align*}
Thus we always  have
\begin{equation}  \label{pc1}
A^*A = BB^*, \quad AA^* = B^* B
\end{equation}
for a pseudo-commutative tetrablock unitary $(A,B,T)$.  
As a first consequence of \eqref{pc1}, we see that
if $A$ is normal, then
$$
B^*B = A A^* = A^* A = B B^*
$$
and $B$ is also normal.  Similarly if $B$ is normal, then $A$ is also normal.  In conclusion, 
{\em if $(A,B,T)$ is a pseudo-commutative unitary such that one of $A$ or $B$ is normal, then so is the other.}

\smallskip

\noindent
(2) 
We note as a consequence of (iii) $\Rightarrow$ (i) in Theorem \ref{T:IsoChar} that in particular {\em if $(A, B, T)$ is a
strict tetrablock unitary  
(so we also have $AB = BA$), then the operators $A$ and $B$ are normal. }  One can see this directly from the considerations here as follows.
As a strict tetrablock unitary in particular meets  all the requirements for membership in the {\em pseudo-commutative tetrablock unitary} class,
we know that \eqref{pc1} holds.
Combining this with the commutativity relation $AB = BA$  then gives us
$$
 A^* A = A^* B^* T = B^* A^* T   = B^* B = A A^*
$$
showing that $A$ is normal. The same computation with the roles of $A$ and $B$ interchanged then shows that $B$ is also normal.
The full strength of (iii) $\Rightarrow$ (i) in Theorem \ref{T:IsoChar} is that in addition the commutative normal triple $(A,B,T)$ has joint spectrum
in the boundary of the tetrablock ${\mathbb E}$; for this somewhat deeper fact  we refer to \cite{Tirtha14}.
\end{remark}

The next result gives a feel for how close pseudo-commutative tetrablock isometries come to being strict tetrablock isometries.

\begin{thm} \label{T:pc-vs-strict-Eisom} Let $(A,B,T)$ be a pseudo-commutative tetrablock isometry on a Hilbert space $\cH$.

\begin{enumerate}
\item Then the spectral radius $r(AB)$  of the product operator $AB$ is given by
\begin{equation}  \label{norm/rad}
   r(AB) = \operatorname{max} \{ \| A \|^2, \, \| B \|^2 \|.
\end{equation}

\item Suppose in addition that $AB = BA$ and $r(A) \le 1$, $r(B) \le 1$.  Then both $A$ and $B$ are contraction operators
($\max \{ \| A \|, \| B \| \} \le 1$) and $(A,B, T)$ is a strict tetrablock isometry.
\end{enumerate}
\end{thm}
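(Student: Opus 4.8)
The plan is to derive both assertions from two elementary facts about the single operator $AB$. First, combining $A = B^*T$ with $TB = BT$ (Definition \ref{D:pc}) gives the identity $AB = (B^*T)B = B^*(TB) = B^*(BT) = (B^*B)T$. Second, $\|A\| = \|B\|$: since $T$ is an isometry, $\|T\| = 1$ (the case $\cH = \{0\}$ being trivial), so $\|A\| = \|B^*T\| \le \|B\|$, and since the triple $(B,A,T)$ is again a pseudo-commutative tetrablock isometry (it verifies condition (1) of Definition \ref{D:pc} precisely because $(A,B,T)$ verifies condition (2)), the same estimate applied to $(B,A,T)$ gives $\|B\| \le \|A\|$. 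Writing $N := \|A\| = \|B\|$, we have $\max\{\|A\|^2,\|B\|^2\} = N^2$, and since $r(AB) \le \|AB\| \le \|A\|\,\|B\| = N^2$, part (1) comes down to proving $r(AB) \ge N^2$.

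To get $r(AB) \ge N^2$ I would use that $T$ is an isometry in the form of the invariance $T^*(B^*B)T = (TB)^*(TB) = B^*B$, hence $T^{*j}(B^*B)T^j = B^*B$ for all $j \ge 0$. Choose unit vectors $x_k$ with $\langle B^*B\,x_k, x_k\rangle \to \|B^*B\| = N^2$ (possible since $B^*B \ge 0$). Because $0 \le B^*B \le N^2 I$, the elementary inequality $\|(N^2 I - B^*B)y\|^2 \le N^2\,\langle (N^2 I - B^*B)y,y\rangle$ for $\|y\| = 1$, applied with $y = T^j x_k$ together with the invariance, gives $\|(B^*B)T^j x_k - N^2 T^j x_k\| \le \varepsilon_k$ with $\varepsilon_k \to 0$ \emph{independently of $j$}; that is, every vector of the orbit $\{T^j x_k\}_{j\ge 0}$ is uniformly an approximate eigenvector of $B^*B$ at the top of its spectrum. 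Plugging this into $(AB)^n = ((B^*B)T)^n$ and expanding from the inside out — each occurrence of $T$ preserving norms and each occurrence of $B^*B$ acting on an orbit vector being replaceable by the scalar $N^2$ at an error controlled by $\varepsilon_k$ and by $\|B^*B\| \le N^2$ — a routine induction yields $(AB)^n x_k = N^{2n}T^n x_k + o(1)$ as $k \to \infty$, for each fixed $n$. Since $\|T^n x_k\| = 1$, this forces $\|(AB)^n\| \ge N^{2n}$; with the trivial bound $\|(AB)^n\| \le N^{2n}$ we conclude $\|(AB)^n\| = N^{2n}$ and therefore $r(AB) = \lim_n \|(AB)^n\|^{1/n} = N^2$, proving (1).

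For (2), assume in addition $AB = BA$ and $r(A) \le 1$, $r(B) \le 1$. As $A$ and $B$ commute, $r(AB) \le r(A)\,r(B) \le 1$ (a standard consequence of Gelfand theory in the commutative Banach algebra they generate). Part (1) then gives $\max\{\|A\|^2,\|B\|^2\} = r(AB) \le 1$, so $\|A\| \le 1$ and $\|B\| \le 1$, and both operators are contractions. Finally $(A,B,T)$ is a commuting triple (from $AB = BA$, $AT = TA$ and $BT = TB$) with $B = A^*T$, $A$ a contraction and $T$ an isometry, so the implication (iv) $\Rightarrow$ (i) of Theorem \ref{T:IsoChar} shows that $(A,B,T)$ is a strict tetrablock isometry.

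The crux is the lower bound $r(AB) \ge N^2$ in (1): the upper bounds and the whole of (2) are formal, whereas showing that the spectral radius of $AB$ genuinely attains $\|A\|\,\|B\|$ uses the isometry hypothesis in an essential way, via the invariance $T^*(B^*B)T = B^*B$ and the propagation of top-of-spectrum approximate eigenvectors of $B^*B$ along the whole forward $T$-orbit. (Dropping the isometry assumption the conclusion can fail outright — e.g.\ for a weighted shift with weights tending to $0$ the spectral radius drops to $0$ while the norm does not.)
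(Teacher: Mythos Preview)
Your proof is correct, and for part (1) it takes a genuinely different route from the paper's argument. The paper forms the $2\times 2$ block operator $X_1 = \sbm{0 & A \\ B & 0}$, observes from $A = B^*T$, $B = A^*T$ that $X_1 = X_1^* \sbm{T & 0 \\ 0 & T}$, and hence that $X_1$ is \emph{hyponormal}; it then invokes Stampfli's theorem ($r(X_1) = \|X_1\|$ for hyponormal $X_1$) and reads off both sides from $X_1^2 = \sbm{AB & 0 \\ 0 & BA}$ and $\|X_1\| = \max\{\|A\|,\|B\|\}$. You instead factor $AB = (B^*B)T$, establish $\|A\| = \|B\|$ by symmetry, and obtain the nontrivial lower bound $r(AB) \ge \|B\|^2$ from the invariance $T^*(B^*B)T = B^*B$ together with a direct approximate-eigenvector propagation along the $T$-orbit. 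Your argument is more elementary and self-contained (no appeal to the theory of hyponormal operators) and explicitly exhibits the equality $\|A\| = \|B\|$; the paper's block-matrix approach is slicker once Stampfli's theorem is granted and makes the structural reason ($X_1$ hyponormal) transparent. For part (2) the two arguments are essentially identical: commutativity gives $r(AB) \le r(A)\,r(B) \le 1$, whence part (1) forces $\|A\|,\|B\| \le 1$, and Theorem \ref{T:IsoChar} finishes. (Your justification via Gelfand theory for $r(AB)\le r(A)r(B)$ is in fact a bit more careful than the paper's, which overstates the spectral-mapping claim for products of commuting operators but only needs the submultiplicativity of $r$ that you cite.)
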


\begin{proof}  The proof follows the ideas of Bhattacharyya \cite[pp.1619-1620]{Tirtha14}.
We first consider statement (1).  Form two operators $X_1 = \sbm{ 0 & A  \\ B & 0 }$ and $X_2 = \sbm{ T & 0 \\ 0 & T}$ on $\sbm{ \cH \\ \cH}$.
From the two relations $B^*T = A$ and $A^*T = B$ we deduce that $X_1 = X_1^* X_2$ where $X_2 X_2^* = \sbm{ T T^* & 0 \\ 0 & T T^*} \preceq 
\sbm{ I & 0 \\ 0 & I}$ since $T$ is an isometry.  Hence 
$$
 X_1 X_1^*  = X_1^* X_2 X_2^* X_1  \preceq X_1^* X_1,
 $$
 i.e.,  $X_1$ is a {\em hyponormal operator}.  By a theorem of Stampfli (see \cite[Proposition 4.6]{Conway}, it follows that $r(X_1) = \| X_1 \|$. 
 We compute the operator norm of $X_1$ as follows:
 $$
  \| X_1\|^2 = \bigg\| \begin{bmatrix} 0 & A \\ B & 0 \end{bmatrix} \begin{bmatrix} 0 & B^* \\ A^* & 0 \end{bmatrix} \bigg\|
  = \bigg\| \begin{bmatrix} A A^* & 0 \\ 0 & B B^* \end{bmatrix} \bigg\|  = \operatorname{max} \{ \| A \|^2, \| B \\^2 \}.
  $$
  and hence $\| X_1 \| = \max \{ \| A \|, \| B \| \}$.  To compute $r(X_1)$, note first that $X_1^*X_1  = \sbm{ A B & 0 \\ 0 & BA }$ and hence
  $$
   X_1^{2n} = \begin{bmatrix} (AB)^n & 0  \\ 0 & (BA)^n \end{bmatrix}.
   $$
   Consequently, 
   $$
   r(X_1) = \lim_{n \to \infty} \max \|  \| (AB)^n \|^{\frac{1}{2n}}, \| (BA)^n \|^{\frac{1}{2n}} \} = \max \{ r(AB)^{\frac{1}{2}}, r(BA)^{\frac{1}{2}} \}.
   $$
However a general fact is that the nonzero spectrum of $AB$ is the same as the nonzero spectrum of $BA$, and hence $r(AB) = r(BA)$.
Thus  $r(X_1) = \| X_1\|$ gives us  \eqref{norm/rad} and the proof of statement (1) is complete.

\smallskip

As for statement (2), a known fact is that if $A$ and $B$ commute, then the spectrum of the product operator $AB$ is given by
$$
  \sigma(AB) = \{ \lam \cdot \mu \colon \lam \in \sigma(A), \, \mu \in \sigma(B) \}.
$$
 Hence the hypothesis that $r(A) \le 1$ and $r(B) \le 1$ implies that $r(AB) \le 1$ as well.  But then from the conclusion of statement (1)
 already proved, we conclude that both $A$ and $B$ are contraction operators, and the proof of statement (2) is now complete.
(Note that this also proves (v) $\Rightarrow$ (iii) or (iv) in Theorem \ref{T:IsoChar}.)
\end{proof}

\begin{example}  \label{E:model}

(1) \textbf{A pseudo-commutative/strict tetrablock isometry.}
Let $\cE$ be a coefficient Hilbert space and $H^2(\cE) = H^2 \otimes \cE$ be the associated Hardy space of $\cE$-valued functions.
Let $G_1$ and $G_2$ be operators on $\cE$ and set 
\begin{equation}   \label{pcform}
 A  = M_{G_1^*  + z G_2}, \quad B = M_{G_2^* + z G_1}, \quad T = M_z \text{ on } H^2(\cE).
\end{equation}
Then it is immediate that $T$ is an isometry and that $A$ and $B$ commute with $T$.  The special coupled form of the pencils defining $A$ and $B$ 
enables us to show that $A = B^*T$: 
\begin{align*}
B^* T&  = (I_{H^2} \otimes G_2^* + M_z \otimes G_1)^* \cdot  (M_z \otimes I_\cG) \\
&  = (I_{H^2} \otimes G_2 + M_z^* \otimes G_1^*)  \cdot (M_z \otimes I_\cG) \\
 & =  (I_{H^2} \otimes G_1^*) + (M_z \otimes G_2)
  = M_{G_1^* + z G_2}  = A
\end{align*}
and similarly $B = A^* T$.  

For $(A,B,T)$ to be a strict tetrablock isometry, we need in addition that $AB = BA$ and that $\|A \| \le 1$ (in which case also
$\| B \| = \| A^* T \| \le 1$ as well).  To ensure that $\| A \| \le 1$ requires that $G_1$ and $G_2$ are not too large in the precise sense that
\begin{equation}  \label{PencilContr}
\sup_{z \in {\mathbb T}} \| G_1^* + z G_2 \| \le 1.
\end{equation}
To check the condition $AB = BA$, we compute
\begin{align*}
A B & = M_{G_1^*  + z G_2} M_{G_2^* + z G_1}  \\
&  = (I_{H^2} \otimes G_1^* + M_z \otimes G_2) \cdot (I_{H^2} \otimes G_2^* + M_z \otimes G_1)  \\
& = I_{H^2} \otimes G_1^* G_2^* + M_z  \otimes (G_1^* G_1 + G_2 G_2^*) + M_z^2 \otimes G_1 G_2
\end{align*}
while a similar computation gives us
$$
BA = I_{H^2} \otimes G_2^* G_1^* + M_z \otimes (G_1 G_1^* + G_2^* G_2) + M_z^2 \otimes G_2 G_1.
$$
We conclude that in this example,  $(A, B, T)$ is a strict tetrablock isometry exactly when \eqref{PencilContr} together with the following commutativity conditions hold:
\begin{equation}  \label{com=}
G_1 G_2 = G_2 G_1, \quad  G_1^* G_1 + G_2 G_2^* = G_1 G_1^* + G_2^* G_2,
\end{equation}
sometimes also written more compactly in terms of commutators as
$$
 [ G_1, G_2] = 0, \quad [G_1^*, G_1] = [G_2^*, G_2]
$$
where in general $[X,Y]$ is the commutator:
$$
  [ X, Y] = X Y - Y X.
$$
\smallskip

(2) \textbf{A pseudo-commutative/strict tetrablock unitary.}
It is easy to  use the spectral theory for unitary operators (a particular case of the spectral theory of normal operators) to write down a model for the general
pseudo-commutative/strict tetrablock unitary $(R,S,W)$.  as follows.  By Definition \ref{D:pc} we see in particular that $W$ is unitary.
By the spectral theory for general normal operators (see e,g,~any of \cite{Dix, Arv} or \cite[Chapter 2]{Arveson02}),  after a unitary change of
coordinates, we can represent  $W$ as the operator $M_\zeta$ of multiplication by the coordinate function ($M_\zeta \colon h(\zeta) \mapsto \zeta h(\zeta)$) on a 
direct-integral space $\bigoplus\int_{\mathbb T} \cH_\zeta\, \nu({\tt d}\zeta)$ determined by a scalar spectral measure $\nu$ supported on ${\mathbb T}$ and a 
measurable multiplicity function
$\zeta \mapsto \dim \cH_\zeta$..  Since the operators $R$ and $S$ commute with $W = M_\zeta$, it follows that $R$ and $S$
are represented as {\em decomposable operators} on $\bigoplus \int_{\mathbb T} \cH_\zeta\, \tt{d} \nu(\zeta)$,  i.e.,  
$R = M_\phi \colon h(\zeta) \mapsto \phi(\zeta) h(\zeta)$ and $S = M_\psi \colon h(\zeta) \mapsto \psi(\zeta) h(\zeta)$
for measurable functions such that $\phi(\zeta) \in \cB(\cH_\zeta)$, $\psi(\zeta) \in \cB(\cH_\zeta)$ for a.e.~$\zeta$.  
The fact that in addition $R = S^*W$ then forces $\phi(\zeta) = \psi(\zeta)^*  \cdot \zeta $ for a.e.~$\zeta$,   Thus any pseudo-commutative tetrablock unitary has the form
\begin{equation}  \label{gen-tetra-unitary}
 (R, S, W) = (M_{\psi^* \cdot \zeta}, M_\psi, M_\zeta) \text{ acting on } \bigoplus \int_{\mathbb T} \cH_\zeta \, \nu(\tt{d}  \zeta)
\end{equation}
If $(R,S,W)$ is a strict tetrablock unitary, then in addition we must have that $\psi(\zeta)$ is a {\em contractive normal operator} on $\cH_\zeta$ for a.e.~$\zeta$ 
in order to guarantee in addition that $RS = SR$ and that $\| R \| \le 1$, $\| S \| \le 1$.  By this analysis we conclude that \eqref{gen-tetra-unitary} 
(with $\psi(\zeta)$ constrained to be contractive normal  for a.e.~$\zeta$ for the strict case) 
is the general form for a pseudo-commutative/strict tetrablock unitary.  In a less-functional form, to write a pseudo-contractive tetrablock triple $(R,S,W)$, the free parameters
are:  (i) a unitary operator $W$, and (ii) an operator $S$ commuting with $W$; then the associated pseudo-commutative tetrablock contraction is
$(W^*S, S, W)$; for this to be strict, one must require in addition that the operator $S$ in the commutant of $W$  be a normal contraction.
\end{example}

To deduce the von Neumann-Wold decomposition for a tetrablock isometry, the next lemma is useful. Only the special case where the operator $S$
in the statement is a shift will be needed for our application, in which case the result is well-known (see e.g. \cite[page 22]{NFintertwine}).
For completeness we present here a proof of the general result.

\begin{lemma} \label{L:zero}
Let $W$ be a unitary operator on $\cH_2$, $S$ an operator on $\cH_1$ such that $S^{*n}\to 0$ in the strong operator topology as $n\to\infty$. 
If $X $ is a bounded operator from $\cH_2$ to $\cH_1$ such that $XW=SX$, then $X=0$.
\end{lemma}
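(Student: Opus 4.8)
The plan is to iterate the intertwining relation $XW = SX$ to obtain $XW^n = S^n X$ for all $n \geq 0$, and then take adjoints to get $W^{*n} X^* = X^* S^{*n}$ for all $n \geq 0$. Now fix an arbitrary vector $h \in \cH_1$ and apply both sides to $h$: we get $W^{*n} X^* h = X^* S^{*n} h$. The right-hand side tends to $0$ in norm as $n \to \infty$, because $S^{*n} \to 0$ strongly and $X^*$ is bounded, so $\| X^* S^{*n} h \| \le \| X^* \| \, \| S^{*n} h \| \to 0$. On the other hand, $W$ is unitary, so $W^{*n}$ is an isometry for every $n$, whence $\| W^{*n} X^* h \| = \| X^* h \|$ for all $n$. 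Combining, $\| X^* h \| = \lim_{n \to \infty} \| W^{*n} X^* h \| = \lim_{n \to \infty} \| X^* S^{*n} h \| = 0$, so $X^* h = 0$. Since $h \in \cH_1$ was arbitrary, $X^* = 0$, hence $X = 0$.

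Concretely, the key steps in order are: (i) establish $XW^n = S^n X$ by induction on $n$; (ii) pass to adjoints to get $W^{*n} X^* = X^* S^{*n}$; (iii) exploit that $W^{*n}$ preserves norm (since $W$, being unitary, has unitary powers whose adjoints are also unitary and in particular isometric); (iv) exploit the strong convergence $S^{*n} \to 0$ together with boundedness of $X^*$ to kill the right-hand side; (v) conclude $X^* h = 0$ for every $h$, hence $X = 0$.

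I do not anticipate a genuine obstacle here — the argument is a short norm-comparison once the relation is iterated and adjoined. The one point requiring a moment's care is the direction in which one iterates and adjoins: one must be careful to move the hypothesis $S^{*n} \to 0$ (a statement about $S^*$) onto the side of the equation where it can act, which is why passing to adjoints of $XW^n = S^nX$ is the right move rather than working directly with $XW^n = S^nX$. After that, the only facts used are that a unitary operator has isometric adjoint-powers and that a bounded operator is continuous, both elementary.
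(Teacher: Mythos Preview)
Your proof is correct and follows essentially the same approach as the paper's: iterate the intertwining to $XW^n = S^nX$, take adjoints, and compare norms using that powers of the unitary $W^*$ are isometric while $S^{*n}\to 0$ strongly. The only cosmetic difference is that the paper first multiplies the adjoint identity by $W^n$ (using $W^nW^{*n}=I$) before taking norms, whereas you invoke directly that $W^{*n}$ is isometric; your version is arguably a hair cleaner, and you correctly take $h\in\cH_1$ (the paper's ``$x\in\cH_2$'' is a typo).
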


\begin{proof} From $XW = SX$ we get by iteration that $XW^n = S^n X$ for $n = 1,2,\dots$. Taking adjoints gives then $W^{*n} X^* = X^* S^{*n}$.
Apply this identity to an arbitrary fixed vector $x \in \cH_2$ to get
$ W^{*n} X^* x = X^* S^{*n} x$ for all $n \ge 1$.  Apply $W^n$ to both sides of this equation to get $W^n W^{*n} X^*x = W^n X^* S^{*n} x$ for $n=1,2, \dots$.
As $W$ is unitary, this becomes $X^*x  = W^n X^* S^{*n} x$.  Taking norms then gives
$$
 \| X^* x \| = \| W^n X^* S^{*n} x \| = \| X^* S^{*n} x \| \le \| X^*\| \| S^{*n} x \| \to 0 \text{ as } n \to \infty
 $$
 by the assumed strong convergence of powers of $S^*$ to zero, forcing $X^*$ (and hence also $X$) to be the zero operator.
 \end{proof}

The von Neumann-Wold decomposition (see   \cite{Wold, vonN-Wold, Nagy-Foias}) ensures that if $T$ is an isometry acting on a Hilbert space $\cH$, then 
$T$ can be represented as an operator as the external direct sum $M_z \oplus U$ of a shift operator $M_z$ acting on a Hardy space $H^2(\cE)$
and a unitary operator $W$ on $\cF$ for some coefficient Hilbert spaces $\cE$ and $\cF$.
 The following result not only gives a model for an arbitrary pseudo-commutative/strict tetrablock isometry $(A,B,T)$, but also can be seen 
 as a pseudo-commutative/strict tetrablock-isometry analogue of the classical von Neumann--Wold decomposition for a
 single  isometric Hilbert space operator $T$.
 
\begin{thm}\label{T:WoldPC}  Let $(A,B,T)$ be an operator-triple on the Hilbert space $\cH$.

\smallskip

\noindent
{\rm(1)}
Then $(A,B,T)$ is a pseudo-commutative tetrablock isometry on $\cH$ if and only if there exist Hilbert spaces $\cE$, $\cF$, operators $G_1,G_2$ acting on $\cE$
subject to \eqref{PencilContr}, along with a
pseudo-commutative tetrablock unitary $(R,S,W)$ acting on $\cF$, such that $\cH$ is isomorphic to $\sbm{H^2(\cE) \\ \cF}$ and under the same isomorphism
$(A,B,T)$ is unitarily equivalent to
\begin{align}\label{Pencils}
\left(\begin{bmatrix}M_{G_1^*+z G_2}&0\\0&R \end{bmatrix},\begin{bmatrix}M_{G_2^*+z G_1}&0\\0&S \end{bmatrix},\begin{bmatrix}M_z &0\\0&W \end{bmatrix}\right).
\end{align}

\smallskip

\noindent
{\rm (2)}  Then $(A,B,T)$ is a strict tetrablock isometry on $\cH$ if and only if $(A,B,T)$ is unitarily equivalent to the operator triple as in \eqref{Pencils}
(with $G_1$, $G_2$ subject to \eqref{PencilContr})
acting on a space $\sbm{H^2(\cE) \\ \cF}$, where in addition the operator-pencil coefficients $(G_1, G_2)$ satisfy the system of operator identities
\eqref{com=}, and the triple $(R,S, W)$ is a strict tetrablock unitary (i.e., we also have the relation $RS = SR$ with $R$ and $S$ contraction operators).
\end{thm}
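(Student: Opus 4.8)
The plan is to reduce the whole statement to the von Neumann--Wold decomposition of the single isometry $T$ together with the classical description of the commutant of the shift $M_z$ on a vector-valued Hardy space $H^2(\cE)$. The ``if'' directions I would dispose of first, as they are routine: if $(A,B,T)$ is unitarily equivalent to the block triple in \eqref{Pencils}, then on the $H^2(\cE)$-summand the computations already carried out in Example \ref{E:model}(1) show the triple is a pseudo-commutative tetrablock isometry (and a strict one precisely when \eqref{PencilContr} and \eqref{com=} hold); on the $\cF$-summand this is exactly the assertion that $(R,S,W)$ is a pseudo-commutative (resp.\ strict) tetrablock unitary in the sense of Definition \ref{D:pc}; and the relations $AT=TA$, $BT=TB$, $A=B^*T$, together with $AB=BA$ and contractivity of $A,B$ in the strict case, all pass to orthogonal direct sums. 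So the substance is the forward direction.

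Assume $(A,B,T)$ is a pseudo-commutative tetrablock isometry and apply the von Neumann--Wold decomposition to $T$: write $\cH=\cH_s\oplus\cH_u$ with $\cH_u=\bigcap_{n\ge0}T^n\cH$, so that $\cH_u$ reduces $T$, $T|_{\cH_u}$ is unitary, and $T|_{\cH_s}$ is unitarily equivalent to $M_z$ on $H^2(\cE)$ for $\cE=\ker T^*$. The key preliminary point is that this decomposition also reduces $A$ and $B$. Since $A$ commutes with $T$, for $x\in\cH_u$ one has $x=T^ny_n$ with $y_n\in\cH$, hence $Ax=T^nAy_n\in T^n\cH$ for every $n$, so $A\,\cH_u\subseteq\cH_u$, and likewise $B\,\cH_u\subseteq\cH_u$; and since $A=B^*T$ forces $A^*=T^*B$ while $\cH_u$ reduces $T$, we get $A^*\cH_u=T^*B\,\cH_u\subseteq T^*\cH_u\subseteq\cH_u$, and symmetrically $B^*\cH_u\subseteq\cH_u$ from $B^*=T^*A$. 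Hence $\cH_u$, and therefore $\cH_s=\cH_u^\perp$, reduces all of $A$, $B$, $T$, and the two summands can be treated separately. (One can also phrase this via Lemma \ref{L:zero} applied to the off-diagonal blocks of $A$ and $B$ coming from $AT=TA$, $BT=TB$, combined again with $A=B^*T$.)

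On $\cH_u$ put $(R,S,W)=(A|_{\cH_u},B|_{\cH_u},T|_{\cH_u})$: then $W$ is unitary, $R$ and $S$ commute with $W$ by restriction, and $R=S^*W$ because $\cH_u$ reduces $B$ so that $(B|_{\cH_u})^*=B^*|_{\cH_u}$; thus $(R,S,W)$ is a pseudo-commutative tetrablock unitary. On $\cH_s\cong H^2(\cE)$, with $T|_{\cH_s}$ identified with $M_z$, the operator $A_0:=A|_{\cH_s}$ commutes with $M_z$ and hence equals $M_\varphi$ for a unique $\varphi\in H^\infty(\cB(\cE))$ with $\|\varphi\|_\infty=\|A_0\|$; likewise $B_0:=B|_{\cH_s}=M_\psi$, and I write $\psi(z)=\sum_{k\ge0}\psi_kz^k$. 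The remaining identity $A_0=B_0^*M_z$, that is $M_\varphi=M_\psi^*M_z$, is the heart of the matter: applying both sides to $z^ne$ for $e\in\cE$ and comparing Taylor coefficients gives, from $n=0$, that $\varphi(z)=\psi_1^*+\psi_0^*z$, and then, from $n\ge1$ as $n\to\infty$, that $\psi_k=0$ for all $k\ge2$, so $\psi(z)=\psi_0+\psi_1z$. Setting $G_1:=\psi_1$ and $G_2:=\psi_0^*$ then gives $A_0=M_{G_1^*+zG_2}$ and $B_0=M_{G_2^*+zG_1}$, which is the required pencil form, and boundedness of $A_0$ shows $z\mapsto G_1^*+zG_2$ is bounded on $\bT$. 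Reassembling the $\cH_s$ and $\cH_u$ summands proves part~(1).

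For part~(2), a strict tetrablock isometry is in particular pseudo-commutative, so part~(1) applies. Restricting $AB=BA$ to $\cH_s$ gives $\varphi\psi=\psi\varphi$ on $\bT$, and equating the three coefficients of this product of affine symbols produces exactly the identities \eqref{com=} (precisely the computation in Example \ref{E:model}(1)), while $\|A_0\|\le\|A\|\le1$ gives \eqref{PencilContr}. Restricting $AB=BA$ to $\cH_u$ gives $RS=SR$, and $\|R\|\le\|A\|\le1$ together with $S=WR^*$ (from $R=S^*W$, $W$ unitary) gives $\|S\|\le1$; so by Theorem \ref{T:IsoChar} the commutative triple $(R,S,W)$ with $W$ unitary, $S$ a contraction and $R=S^*W$ is a strict tetrablock unitary, and the converse in part~(2) is the ``if'' direction noted at the start. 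I expect the genuine obstacle to be the structural step on $\cH_s$: extracting, from the single relation $M_\varphi=M_\psi^*M_z$ between members of the shift commutant, that both symbols are affine and share the coupled pencil form $(G_1^*+zG_2,\,G_2^*+zG_1)$ with one common pair $(G_1,G_2)$. The rest --- the Wold decomposition, its reducing $A$ and $B$, and the coefficient matching for \eqref{com=} --- is bookkeeping.
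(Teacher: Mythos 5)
Your proposal is correct and follows essentially the same route as the paper: Wold decomposition of $T$, showing that it reduces $A$ and $B$, identifying the shift-commutant blocks as $H^\infty$ multipliers, and matching Taylor coefficients in $M_\varphi=M_\psi^*M_z$ to force the coupled affine pencils (the paper kills the off-diagonal blocks via Lemma \ref{L:zero} and the $(21)$-entries of $A=B^*T$, whereas your primary argument uses $\cH_u=\bigcap_n T^n\cH$ directly, but you note both and they are interchangeable). You also carry out explicitly the coefficient computation the paper leaves as a ``tedious computation,'' and your part~(2) matches the paper's splitting of $AB=BA$ into \eqref{com=} and $RS=SR$.
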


\begin{remark} \label{R:FuncModel} We shall think of a triple of operators on $\sbm{ H^2(\cE) \\ \cF}$ as in \eqref{Pencils} as a \textbf{functional model} for a pseudo-commutative/strict
tetrablock isometry/unitary.  The $H^2(\cE)$-component clearly has a functional form while the second component can be brought to a measure-theoretic
functional form as in item (2) in Example \ref{E:model}.
\end{remark}

\begin{proof}
The sufficiency (for both the pseudo-commutative and the strict case)  follows from Example \ref{E:model}.

We now suppose that $(A,B,T)$ is a strict tetrablock isometry.
Let us  apply the Wold decomposition to the isometry $T$: there exist Hilbert spaces 
$\cE$, $\cF$, and a unitary $\tau:\cH\to\sbm{H^2(\cE)\\ \cF}$ such that
$$
\tau T\tau^*=
\begin{bmatrix}M_z&0\\0&W \end{bmatrix}:\begin{bmatrix} H^2(\cE)\\ \cF\end{bmatrix} \to \begin{bmatrix} H^2(\cE)\\ \cF\end{bmatrix}
$$
for some unitary $W$ on $\cF$. Next assume that
$$
\tau(A,B)\tau^*=\left(\begin{bmatrix}
A_{11}&A_{12}\\A_{21}&R
\end{bmatrix},\begin{bmatrix}
B_{11}&B_{12}\\B_{21}&S
\end{bmatrix}\right):\begin{bmatrix}
H^2(\cE)\\ \cF
\end{bmatrix}\to
\begin{bmatrix}
H^2(\cE)\\ \cF
\end{bmatrix}.
$$
Now use these matrix representations and equate the (12)-entries of the relation $AT=TA$ to get $A_{12}W=M_zA_{12}$. Therefore $A_{12}=0$ by Lemma \ref{L:zero}. 
Similarly, from the relation $BT=TB$ we have $B_{12}=0$. Compare the (21)-entries of the relation $A=B^*T$ to get $A_{21}=0$. The same treatment for the relation 
$B=A^*T$ gives $B_{21}=0$.  Therefore we are left with the following relations
\begin{align*}
&A_{11}M_z=M_zA_{11}, \, B_{11}M_z=M_zB_{11},\, A_{11}=B_{11}^*M_z \text{ (and $B_{11} = A_{11}^* M_z$);}  \\
& RW=WR, \, SW=WS, \,  R=S^*W \text{ (and $S = W R^*$).}
\end{align*}
The second set of the above relations together with the fact that $W$ is a unitary implies that $(R,S,W)$ is a pseudo-commutative tetrablock unitary.  The first two 
intertwining relations in the first set implies that 
there exist bounded analytic functions $\Phi,\Psi:\mathbb D\to \cB(\cE)$ such that $A_{11} = M_\Phi$, $B_{11} = M_\Psi$.  The remaining relations in the first set
then give us
$$
 M_\Phi =   M_\Psi^* M_z, \quad M_\Psi = M_\Phi^* M_z.
$$
There now only remains a tedious computation with the power series expansions of $\Phi$ and $\Psi$ to see that the remaining  relations in the second set forces
$\Phi$ and $\Psi$ to have the coupled linear forms 
$$
\Phi(z)=G_1^*+zG_2 \quad\mbox{and}\quad \Psi(z)=G_2^*+zG_1
$$ 
for some operators $G_1,\, G_2 \in \cB(\cE)$.  Again the relation \eqref{PencilContr} is equivalent to $M_\Phi$ being a contraction operator.  
From Definition \ref{D:pc} it follows that $(M_\Phi, M_\Psi, M_z)$ is a pseudo-commutative tetrablock isometry. 
The completes the proof for the pseudo-commutative setting.

Suppose now that $(A, B, T)$ is a tetrablock isometry.  Then in particular $(A,B,T)$ satisfies all the requirements to be a pseudo-commutative tetrablock isometry,
so all the preceding analysis applies.  We then see that $(A,B,T)$ is unitarily equivalent to the triple in \eqref{Pencils}.  As $(A,B,T)$ now is
actually a tetrablock isometry, we have that $AB  = BA$.  The unitary equivalence then forces 
$$
\begin{bmatrix} M_{G_1^* + z G_2} & 0 \\ 0 & R \end{bmatrix} \begin{bmatrix} M_{G_2^* + z G_1} & 0 \\ 0 & S \end{bmatrix}
=  \begin{bmatrix} M_{G_2^* + z G_1} & 0 \\ 0 & S \end{bmatrix}  \begin{bmatrix} M_{G_1^* + z G_2} & 0 \\ 0 & R \end{bmatrix} 
$$
which can be split up as two commutativity conditions
\begin{align}
& M_{G_1^* + z G_2}  M_{G_2^* + z G_1} =      M_{G_2^* + z G_1}  M_{G_1^* + z G_2}   \label{com=1} \\
& RS = SR.  \label{com=2}
\end{align}
By reversing the computations done in item (1) of Example \ref{E:model}, we see that the intertwining \eqref{com=1} forces the set of conditions \eqref{com=}
Moreover, the condition \eqref{com=2} is exactly the missing ingredient needed to promote $(R,S,W)$ from a pseudo-commutative tetrablock unitary
to a strict tetrablock unitary.  This completes the proof.
\end{proof}

\begin{remark}  \label{R:semi-strict}  It will be useful to have a terminology for an intermediate class of operator triples $(A,B,T)$ which sits between
strict tetrablock isometries and general pseudo-commutative tetrablock isometries.  Let us say that the triple $(A,B,T)$ is a
\textbf{semi-strict tetrablock isometry} if $(A,B,T)$ is a pseudo-commutative isometry with Wold decomposition as in \eqref{Pencils} is
such that the pseudo-commutative tetrablock unitary component $(R,S,W)$ is actually a strict tetrablock unitary, i.e., $R$ and $S$ are contractions
which commute with each other as well as with $W$ which is unitary.
\end{remark}

We next present an analogue of the single-variable operator theory fact that any isometry can always be extended to a unitary.

\begin{corollary}  \label{C:pc} Pseudo-commutative/strict tetrablock isometries can be extended to pseudo-commutative/strict tetrablock unitaries.  More precisely:

\smallskip
\noindent
{\rm (1)}  A triple $(A,B,T)$ is a pseudo-commutative tetrablock isometry if and only if it extends to a pseudo-commutative tetrablock unitary. Moreover there exists an extension that acts on the space of minimal unitary extension of the isometry $T$.

\smallskip

\noindent
{\rm (2)} A triple $(A,B,T)$ is a  strict tetrablock isometry if and only if it extends to a strict tetrablock unitary acting on the space of the minimal unitary extension
of the isometry $T$.
\end{corollary}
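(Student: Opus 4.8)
The plan is to reduce the ``only if'' direction to the Wold-type normal form of Theorem~\ref{T:WoldPC} and then extend only the Hardy-space part of that model to an $L^2$-part, while the ``if'' direction will follow directly by restricting the defining identities to a jointly invariant subspace. Throughout I read ``$(A,B,T)$ extends to $(\widetilde A,\widetilde B,\widetilde T)$'' as meaning that $\cH$ is a subspace of $\widetilde\cH$ invariant under each of $\widetilde A,\widetilde B,\widetilde T$ with $\widetilde A|_\cH=A$, $\widetilde B|_\cH=B$, $\widetilde T|_\cH=T$, exactly as in the scalar statement ``an isometry extends to a unitary.''

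\emph{The ``if'' direction} (for both (1) and (2)). Let $(A,B,T)$ be the restriction of a pseudo-commutative (resp.\ strict) tetrablock unitary $(\widetilde A,\widetilde B,\widetilde T)$ to a jointly invariant subspace $\cH$. Since $\widetilde T$ is unitary and $\cH$ is $\widetilde T$-invariant, $T$ is an isometry; restricting $\widetilde A\widetilde T=\widetilde T\widetilde A$, $\widetilde B\widetilde T=\widetilde T\widetilde B$ (and $\widetilde A\widetilde B=\widetilde B\widetilde A$ in the strict case) to $\cH$ yields the corresponding identities for $(A,B,T)$. The only point needing care is $A=B^*T$: writing $\widetilde B$ in block-upper-triangular form relative to $\cH\oplus\cH^\perp$ one has $\widetilde B^*=\sbm{B^*&0\\ *&*}$, so for $x\in\cH$ the vector $\widetilde B^*\widetilde Tx=\widetilde Ax=Ax$ already lies in $\cH$ and therefore equals $B^*Tx$. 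With $\|B\|\le\|\widetilde B\|$ (which is $\le1$ in the strict case), Definition~\ref{D:pc} (resp.\ the implication (iii)$\Rightarrow$(i) of Theorem~\ref{T:IsoChar}) shows $(A,B,T)$ is a pseudo-commutative (resp.\ strict) tetrablock isometry.

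\emph{The ``only if'' direction.} By Theorem~\ref{T:WoldPC}, after a unitary identification $\cH\cong\sbm{H^2(\cE)\\\cF}$ we may assume $(A,B,T)=(M_{G_1^*+zG_2}\oplus R,\ M_{G_2^*+zG_1}\oplus S,\ M_z\oplus W)$, where $(G_1,G_2)$ satisfies \eqref{PencilContr} (and additionally \eqref{com=} in the strict case) and $(R,S,W)$ is a pseudo-commutative (resp.\ strict) tetrablock unitary. Leaving the $(R,S,W)$-summand untouched, I pass to the Laurent operators, i.e.\ to
$$\Bigl(M_{G_1^*+\zeta G_2}\oplus R,\ M_{G_2^*+\zeta G_1}\oplus S,\ M_\zeta\oplus W\Bigr)\ \text{ on }\ \begin{bmatrix}L^2(\cE)\\\cF\end{bmatrix}.$$
Because the pencils $G_1^*+\zeta G_2$ and $G_2^*+\zeta G_1$ are analytic in $\zeta$, restricting this triple to the invariant subspace $\sbm{H^2(\cE)\\\cF}$ gives back $(A,B,T)$, so it is genuinely an extension. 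Since $M_\zeta$ is unitary on $L^2(\cE)$, the symbol computations of Example~\ref{E:model} (run on $\lvert\zeta\rvert=1$, where $\overline\zeta=\zeta^{-1}$) show that $M_{G_1^*+\zeta G_2}=(M_{G_2^*+\zeta G_1})^*M_\zeta$, that both operators commute with $M_\zeta$, and --- invoking \eqref{com=} and \eqref{PencilContr} in the strict case --- that they commute with one another and are contractions; forming direct sums with the already-unitary $(R,S,W)$ and appealing to Definition~\ref{D:pc} (resp.\ Theorem~\ref{T:IsoChar}), the displayed triple is a pseudo-commutative (resp.\ strict) tetrablock unitary. Finally $M_\zeta\oplus W$ is the unique minimal unitary extension of the isometry $M_z\oplus W$, since $\bigvee_{n\ge0}M_\zeta^{*n}H^2(\cE)=L^2(\cE)$ while $W$ is already unitary; transporting back through the identification $\cH\cong\sbm{H^2(\cE)\\\cF}$ produces an extension acting on the minimal unitary extension space of $T$, which is the ``moreover'' clause and also completes (2).

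\emph{The main obstacle} is mild: the only substantive checks are the vanishing of the off-diagonal block in the ``if'' direction and the verification that the Laurent triple obeys the identities of Definition~\ref{D:pc}, and the latter is simply a rerun of the computations in Example~\ref{E:model} with the isometry $M_z$ replaced by the unitary $M_\zeta$; everything else is bookkeeping with the Wold model.
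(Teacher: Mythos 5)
Your proposal is correct and follows essentially the same route as the paper: the converse is read off from Definition \ref{D:pc} (resp.\ Theorem \ref{T:IsoChar}(iii)), and the forward direction reduces via Theorem \ref{T:WoldPC} to the pencil model and extends the Toeplitz part to the corresponding Laurent operators on $L^2(\cE)$, leaving the unitary summand untouched. Your block-triangular argument showing that $\widetilde B^*\widetilde Tx$ already lies in $\cH$ is a welcome elaboration of a step the paper only asserts.
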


\begin{proof}
If a triple extends to a pseudo-commutative tetrablock unitary, then from Definition \ref{D:pc} we can read off that it is also a pseudo-commutative tetrablock isometry. 
Similarly, if a triple extends to a strict tetrablock unitary, we can read off from criterion (iii) or (iv) in Theorem \ref{T:IsoChar} that the triple itself must be
a strict tetrablock isometry.

We now address the converse.   
In view of Theorem \ref{T:WoldPC}, we can assume without loss of generality that a pseudo-commutative tetrablock isometry $(A,B,T)$ is given in the form:
$$
\left(\begin{bmatrix}M_{G_1^*+z G_2}&0\\0&R \end{bmatrix},\begin{bmatrix}M_{G_2^*+z G_1}&0\\0&S \end{bmatrix},\begin{bmatrix}M_z &0\\0&W \end{bmatrix}\right): \begin{bmatrix}H^2(\cE)\\\cF\end{bmatrix}\to \begin{bmatrix}H^2(\cE)\\\cF\end{bmatrix}
$$
for some operators $G_1$, $G_2$ on $\cE$ and for some pseudo-commutative tetrablock unitary $(R,S,W)$ acting on $\cF$. Consider $H^2(\cE)\oplus\cF$ 
as a subspace of $L^2(\cE)\oplus\cF$ in the natural way. Then the triple
$$
\left(\begin{bmatrix}M_{G_1^*+\zeta G_2}&0\\0&R \end{bmatrix}, \begin{bmatrix}M_{G_2^*+\zeta G_1}&0\\0&S \end{bmatrix},
\begin{bmatrix}M_\zeta &0\\0&W \end{bmatrix}\right)\colon
 \begin{bmatrix}L^2(\cE)\\ \cF\end{bmatrix} \to \begin{bmatrix}L^2(\cE)\\ \cF\end{bmatrix}
$$
is an extension of $(A,B,T)$. The unitary $M_\zeta\oplus W$ is clearly a minimal unitary extension of the isometry $M_z\oplus W$. And since $M_{G_1^*+\zeta G_2}=M_{G_2^*+\zeta G_1}^*M_\zeta$ and $(R,S,W)$ is a pseudo-commutative tetrablock unitary, the above triple is a pseudo-commutative tetrablock unitary by Definition \ref{D:pc}.

If we start with a strict tetrablock isometry, then we shall also have that
$\sbm{ M_{G_1^*+ z G_2} & 0 \\ 0 & R }$ commutes with $\sbm{M_{G_2^* + z G_1} & 0 \\ 0 & S}$ on $\sbm{ H^2(\cE)  \\ \cF}$, or equivalently,
$RS = SR$ and the Toeplitz operator symbols equal to the pencils $G_1^* + z G_2$ and $G_2^* + z G_1$ commute:
$$
  (G_1^* + z G_2)(G_2^* + z G_1 ) = (G_2^* + z G_1 ) (G_1^* + z G_2).
 $$
 But then it is straightforward to see that this implies the commutativity of the associated Laurent operators acting on $L^2(\cE)$:
 $$
   M_{ G_1^* + \zeta G_2} M_{G_2^* + \zeta G_1} =   M_{G_2^* + \zeta G_1} M_{ G_1^* + \zeta G_2} .
 $$
 and hence also
 $$
 \begin{bmatrix} M_{ G_1^* + \zeta G_2}  & 0 \\ 0 & R \end{bmatrix}  \begin{bmatrix} M_{G_2^* + \zeta G_1} & 0 \\ 0 & S \end{bmatrix} =
          \begin{bmatrix}  M_{G_2^* + \zeta G_1} & 0 \\ 0 & S  \end{bmatrix}  \begin{bmatrix} M_{ G_1^* + \zeta G_2}  & 0 \\ 0 & R \end{bmatrix}. 
 $$
 on $\sbm{ L^2(\cE) \\ \cF}$.
 Moreover the extension of $M_{ G_1^* + z G_2}$ on $H^2(\cE)$ to $M_{ G_1^* + \zeta G_2}$ on $L^2(\cE)$ is norm-preserving, and hence the latter
 is contractive whenever the former is contractive, and similarly for $M_{G_2^* + z G_1}$ and $M_{G_2^* + \zeta G_1}$.
 We now have enough observations to conclude by criterion (iii) or (iv) in Theorem \ref{T:IsoChar} that 
 $\big( \left[ \begin{smallmatrix} M_{ G_1^* + \zeta G_2}  & 0 \\ 0 & R \end{smallmatrix} \right], 
 \left[ \begin{smallmatrix} M_{G_2^* + \zeta G_1} & 0 \\ 0 & S \end{smallmatrix} \right] , \left[ \begin{smallmatrix} M_\zeta & 0 \\ 0 & W \end{smallmatrix} \right] \big)$
on $\sbm{ L^2(\cE) \\ \cF}$ is a  tetrablock unitary as required.
 \end{proof}
 
 Another one-variable fact is the result due to Sz.-Nagy-Foias (see \cite[Theorem I.3.2]{Nagy-Foias}):  {\em any contraction operator $T$ on a Hilbert space $\cH$
  can be decomposed as $T = \sbm { T_{cnu} & 0 \\ 0 & U}$ where $T_{cnu}$ is a {\em completely nonunitary} (c.n.u.) contraction operator (meaning there is no 
  reducing subspace of $\cH$ such that $T|_\cH$ is unitary) and where $U$ is unitary}.  There is a analogous result for the setting of tetrablock unitaries
  and tetrablock contractions.  We say that the tetrablock contraction $(A,B,T)$ on $\cH$  is a \textbf{c.n.u.~tetrablock contraction} if there is no nontrivial
  jointly reducing subspace $\cH_u \subset \cH$ for $(A,B,T)$ such that $(A,B,T)|_{\cH_u}$ is a tetrablock unitary.  The following result appears in
  \cite{PalJMAA16}
  
  \begin{thm} \label{T:tetra-can-decom}  Let $(A,B,T)$ be a tetrablock contraction on $\cH$.  Then $\cH$ has an internal orthogonal direct-sum decomposition 
  $\cH = \cH_{c.n.u.} \oplus \cH_u$ with $\cH_{c.n.u.}$ and $\cH_u$ jointly reducing for $(A,B,T)|_{\cH_{c.n.u.}}$ equal to a c.n.u.\ tetrablock contraction
  and $(A,B,T)|_{\cH_u}$ equal to a tetrablock unitary.
  \end{thm}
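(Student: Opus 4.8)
The plan is to construct the unitary part $\cH_u$ as the \emph{largest} subspace of $\cH$ that jointly reduces $(A,B,T)$ and on which the restricted triple is a tetrablock unitary, and then to set $\cH_{c.n.u.}:=\cH\ominus\cH_u$. The argument will rest on two inputs: the single-operator Sz.-Nagy--Foias canonical decomposition $T=T_{c.n.u.}\oplus U$ recalled just above, which supplies the largest reducing subspace $\cG\subseteq\cH$ of $T$ on which $T$ acts unitarily (together with its maximality property), and the implication (ii)\,$\Rightarrow$\,(i) of Theorem \ref{T:IsoChar}, which says that a commutative triple that is a tetrablock contraction and whose third entry is unitary is automatically a tetrablock unitary.

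First I would record two soft facts. (a) If a subspace $\cM\subseteq\cH$ reduces each of $A$, $B$, $T$, then it reduces $r(A,B,T)$ for every $r\in\operatorname{Rat}({\mathbb E})$ -- for the polynomial part this is immediate, and for a factor $q(A,B,T)^{-1}$ one notes that the projection onto $\cM$ commutes with $q(A,B,T)$ and hence with its inverse -- and moreover $r(A,B,T)|_\cM=r(A|_\cM,B|_\cM,T|_\cM)$; since restriction does not increase operator norms, $(A|_\cM,B|_\cM,T|_\cM)$ is again a commutative tetrablock contraction. (b) The closed linear span of any family of subspaces, each of which reduces a fixed operator $X$, again reduces $X$, because the join of a family of projections in the von Neumann algebra $\{X,X^*\}'$ stays in $\{X,X^*\}'$.

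Next, let $\cF$ be the family of all subspaces of $\cH$ that jointly reduce $(A,B,T)$ and on which $(A,B,T)$ restricts to a tetrablock unitary; I claim $\cF$ has a largest element. For $\cM\in\cF$ the operator $T|_\cM$ is unitary and $\cM$ reduces $T$, so maximality of $\cG$ forces $\cM\subseteq\cG$. Hence $\cH_u:=\bigvee_{\cM\in\cF}\cM$ satisfies $\cH_u\subseteq\cG$; by (b) it jointly reduces $(A,B,T)$; and being a reducing subspace of the unitary $T|_\cG$ it has $T|_{\cH_u}$ unitary, so by (a) together with Theorem \ref{T:IsoChar} the triple $(A,B,T)|_{\cH_u}$ is a tetrablock unitary. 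Thus $\cH_u\in\cF$ and is its maximum. Finally I set $\cH_{c.n.u.}:=\cH\ominus\cH_u$; as the complement of a jointly reducing subspace it jointly reduces $(A,B,T)$, and by (a) the restriction $(A,B,T)|_{\cH_{c.n.u.}}$ is a tetrablock contraction. It is c.n.u., since any nonzero subspace $\cN\subseteq\cH_{c.n.u.}$ jointly reducing $(A,B,T)|_{\cH_{c.n.u.}}$ with tetrablock-unitary restriction would also jointly reduce $(A,B,T)$ on $\cH$, hence lie in $\cF$, hence be contained in $\cH_u\cap\cH_{c.n.u.}=\{0\}$ -- a contradiction. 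The same maximality remark yields uniqueness of the decomposition.

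I do not anticipate a serious obstacle here: once the statement is phrased in terms of reducing subspaces the argument is essentially lattice-theoretic. The only point requiring genuine care is that translation step -- verifying that passing to a jointly reducing subspace is compatible with the rational functional calculus (including the inverses occurring in elements of $\operatorname{Rat}({\mathbb E})$), so that such restrictions remain tetrablock contractions, and that ``tetrablock unitary'' is detected purely by unitarity of the third entry, which is exactly where Theorem \ref{T:IsoChar} enters.
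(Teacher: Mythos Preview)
Your argument is correct. The paper itself does not give a proof but cites \cite{PalJMAA16}, and the sentence immediately following the theorem statement indicates the intended approach: one shows directly that the Sz.-Nagy--Foias decomposition space $\cG$ for the single contraction $T$ is already jointly reducing for $A$ and $B$, so that $\cH_u=\cG$ and $\cH_{c.n.u.}=\cH\ominus\cG$ do the job.

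Your route is genuinely different. You never verify that $\cG$ reduces $A$ and $B$; instead you take $\cH_u$ to be the join of all jointly reducing tetrablock-unitary subspaces, use maximality of $\cG$ only to place $\cH_u\subseteq\cG$ (so that $T|_{\cH_u}$ is unitary), and then invoke Theorem~\ref{T:IsoChar} to conclude. The gain is a clean lattice-theoretic argument that sidesteps any hands-on analysis of how $A$ and $B$ interact with the subspace $\cG=\{x:\|T^nx\|=\|x\|=\|T^{*n}x\|\text{ for all }n\}$. The cost is that you do not recover the ``surprising fact'' the paper highlights, namely that the tetrablock decomposition actually coincides with the canonical decomposition of $T$ (i.e.\ $\cH_u=\cG$); your proof only gives $\cH_u\subseteq\cG$. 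That identification is useful later in the paper's perspective (it says the third coordinate alone governs the splitting), so if you want it you would still need the reduction argument from \cite{PalJMAA16}.
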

  
  The at first surprising fact is that the same decomposition $\cH = \cH_{c.n.u.} \oplus \cH_u$ inducing the canonical decomposition of the contraction operator
  $T$ into is c.n.u.~part $T_{c.n.u.}$ and its unitary part $T_u$  turns out to also be jointly reducing for the whole operator triple $(A,B,T)$ and induces
  the canonical  tetrablock decomposition for the tetrablock contraction $(A,B,T)$, i.e., 
  $(A,B,T)|_{\cH_{c.n.u.}}$ is a c.n.u.\ tetrablock contraction, and $(A,B,T)|_{\cH_u}$ is a tetrablock unitary.
  
  \begin{remark}  \label{R:tetra-can-decom}    The import of Theorem \ref{T:tetra-can-decom} for model theory is the same as is the case for the classical case:
  since the model theory for a (strict) tetrablock unitary  is already well understood (see item (2) in Remark \ref{R:pseudocom}, it follows that it is perfectly
  satisfactory to focus on the case where $(A,B,T)$ is a c.n.u.~tetrablock contraction for the purposes of model theory.
  \end{remark}

\subsection{A canonical construction of a tetrablock unitary from a tetrablock contraction}\label{SS:canonical}

In this section we start with a tetrablock contraction $(A,B,T)$ and construct a tetrablock unitary which is uniquely associated with $(A,B,T)$ in a sense that will be made precise later in this subsection. This will be used later to construct a concrete functional model for a  pseudo-commutative tetrablock-isometric lift for $(A,B,T)$
which can be viewed as a functional model for $(A,B,T)$ itself.

We start by using the fact that the last entry $T$ in our tetrablock contraction $(A, B, T)$ is a contraction operator. Hence there exist a positive semidefinite operator $Q_{T^*}$ such that
\begin{align}\label{Q}
Q_{T^*}^2:=\operatorname{SOT-}\lim T^nT^{* n}.
\end{align}
Define the operator $X_{T^*}^*:\overline{\operatorname{Ran}}\;Q_{T^*}\to\overline{\operatorname{Ran}}\;Q_{T^*}$ densely by
\begin{align}\label{theX}
X_{T^*}^*Q_{T^*}h=Q_{T^*}T^*h.
\end{align}
This is an isometry because for all $h\in\cH$,
\begin{align} 
& \| X_{T^*}^* Q_{T^*}h\|^2  = \langle Q_{T^*}^2 T^*h, T^*h\rangle=\lim_{n\to\infty}\langle T^{n}T^{* n}T^*h,T^*h\rangle  \notag \\
& \quad = \lim_{n \to \infty} \langle T^{n+1} T^{*(n+1)} h, h \rangle  =\langle Q_{T^*}^2 h,  h\rangle = \| Q_{T^*}  h \|^2.
\label{Xisoprf}
\end{align}
Since $A$ is a contraction, we have for all $h\in\cH$
\begin{align*}
\langle AQ_{T^*}^2A^*h,h\rangle=\langle \lim_n T^nAA^*T^{*n} h,h\rangle \leq \lim_n\langle T^nT^{*n}h,h\rangle=\langle Q_{T^*}^2h,h \rangle.
\end{align*}
The same computation for the contraction $B$ will yield the same inequality involving $B$ in place of $A$. Consequently, the operators 
$A_{T^*}.B_{T^*}:\overline{\operatorname{Ran}}\;Q_{T^*}\to\overline{\operatorname{Ran}}\;Q_{T^*}$ defined densely by
\begin{align}\label{AB}
A_{T^*}^*Q_{T^*}h=Q_{T^*}A^*h \quad\text{and}\quad B_{T^*}^*Q_{T^*}h=Q_{T^*}B^*h
\end{align}
are contractions, and extend contractively to all of $\overline{\operatorname{Ran}}\, Q_{T^*}$ by a limiting process.
 Furthermore, from the definitions it is easy to see that $(A_{T^*}, B_{T^*}, X_{T^*})$ is a commutative triple since
by assumption we know that $(A^*, B^*, T^*)$ is commutative.
This and \eqref{theX} imply that if $f$ is a three-variable polynomial, then for all $h\in\cH$,
\begin{align*}
& \| f(A_{T^*}^*,B_{T^*}^*,X_{T^*}^*) Q_{T^*} h\| = \| Q_{T^*} f(A^*,B^*,T^*) h\| \\
& \quad \le \| f(A^*, B^*, T^*) \|  \| h || \leq \| f(A^*,B^*,T^*)\| \| h \|  \leq   \big(  \sup_{\mathbb E} |f| \big) \| h \|
\end{align*}
where in the last inequality we used the fact that $(A,B,T)$ is a tetrablock contraction. This inequality together with the fact that $X_{T^*}^*$ is an isometry implies that 
$(A_{T^*}^*,B_{T^*}^*,X_{T^*}^*)$ is a tetrablock isometry. By Corollary \ref{C:pc}, $(A_{T^*}^*,B_{T^*}^*,X_{T^*}^*)$ has a tetrablock unitary extension 
$(R_D^*,S_D^*,W_D^*)$ acting on  a space which we shall call $\cQ_{T^*} \supseteq \overline{\operatorname{Ran }}\;Q_{T^*}$, 
where $W_D^*$ acting on $\cQ_{T^*}$ is the minimal unitary extension of $X_{T^*}^*$. 

\begin{definition}\label{D:canonical}
Let $(A,B,T)$ be a tetrablock contraction and let $(R_D,S_D,W_D)$ be the tetrablock unitary constructed from $(A,B,T)$ as above. We say that $(R_D,S_D,W_D)$ 
is the {\bf canonical tetrablock unitary} associated with the tetrablock contraction $(A,B,T)$.
\end{definition}

The next result assures us that canonical tetrablock unitaries associated with the same tetrablock contraction $(A,B,T)$ are the same up to unitary equivalence.

\begin{thm}\label{T:UniCanon}
Let $(A,B,T)$ on $\cH$ and $(A',B',T')$ on $\cH'$ be two tetrablock contractions with $(R_D,S_D,W_D)$ and $(R_D',S_D',W_D')$ equal to the respective canonical 
tetrablock unitaries. If $(A,B,T)$ and $(A',B',T')$ are unitarily equivalent via $\tau$, then $(R_D,S_D,W_D)$ and $(R_D',S_D',W_D)$ are unitarily equivalent via 
$\omega_\tau:\cQ_{T^*}\to\cQ_{T^{'*}}$
\begin{align}\label{omega-tau}
\omega_\tau: W_D^n Q_{T^*}h\mapsto W_D^{'n} Q_{T'^*}\tau h 
\end{align}for all $n\geq0$ and $h\in\cH$.
\end{thm}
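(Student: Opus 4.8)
The plan is to verify that the prescribed map $\omega_\tau$ is a well-defined unitary operator and that it intertwines the two canonical tetrablock unitaries in all three coordinates. First I would check that $\omega_\tau$ is isometric on the dense set spanned by vectors of the form $W_D^n Q_{T^*}h$ with $n \geq 0$, $h \in \cH$. Since $\tau$ implements the unitary equivalence of $(A,B,T)$ and $(A',B',T')$, in particular $\tau T = T'\tau$ and $\tau T^* = T'^*\tau$, so $\tau$ intertwines $T^n T^{*n}$ with $T'^n T'^{*n}$ for every $n$; passing to the SOT-limit gives $\tau Q_{T^*}^2 = Q_{T'^*}^2 \tau$, hence $\tau Q_{T^*} = Q_{T'^*}\tau$ (using that $\tau$ is unitary and the positive square root is preserved under unitary conjugation). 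Then, recalling that $W_D^*$ restricted appropriately acts by $X_{T^*}^* Q_{T^*}h = Q_{T^*}T^*h$ and that $(W_D, W_D')$ are the minimal unitary extensions of $X_{T^*}, X_{T'^*}$, a computation with inner products of the form $\langle W_D^m Q_{T^*}h, W_D^n Q_{T^*}k\rangle$ reduces (by minimality and the unitary/isometry relations, moving powers of $W_D^{\pm 1}$ from one side to the other) to inner products of the form $\langle Q_{T^*}T^{*j}h, Q_{T^*}T^{*i}k\rangle = \langle Q_{T^*}^2 T^{*j}h, T^{*i}k\rangle$, which in turn equals the corresponding quantity with primes by the intertwining $\tau Q_{T^*} = Q_{T'^*}\tau$. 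This shows $\omega_\tau$ is a well-defined isometry on a dense subspace.

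Next I would argue surjectivity: because $W_D'$ is the \emph{minimal} unitary extension of $X_{T'^*}^*$ and $W_D'^*$ acts on $Q_{T'^*}\cH'$ as described, the closed span of $\{W_D'^n Q_{T'^*}h' : n \geq 0,\, h' \in \cH'\}$ is all of $\cQ_{T'^*}$; since $\tau$ is onto $\cH'$, the range of $\omega_\tau$ is dense, and being an isometry with dense range it extends to a unitary $\omega_\tau \colon \cQ_{T^*}\to\cQ_{T'^*}$. Then the intertwining relation $\omega_\tau W_D = W_D' \omega_\tau$ is immediate from the defining formula \eqref{omega-tau}: apply $W_D$ to $W_D^n Q_{T^*}h$ to get $W_D^{n+1}Q_{T^*}h$, whose image is $W_D'^{n+1}Q_{T'^*}\tau h = W_D'(W_D'^n Q_{T'^*}\tau h)$.

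For the intertwining of $R_D$ and $S_D$ I would proceed on the same dense set. Recall that $R_D^*, S_D^*$ extend $A_{T^*}^*, B_{T^*}^*$, which satisfy $A_{T^*}^* Q_{T^*}h = Q_{T^*}A^*h$ and $B_{T^*}^* Q_{T^*}h = Q_{T^*}B^*h$, and that $R_D^*, S_D^*$ commute with $W_D^*$ (hence $R_D, S_D$ commute with $W_D$, these being tetrablock unitaries). Thus on a generic basis vector, $R_D^* W_D^n Q_{T^*}h = W_D^n R_D^* Q_{T^*}h = W_D^n Q_{T^*}A^*h$ (for $n\ge 0$ one must first move $R_D^*$ past $W_D^n$, which is legitimate by commutativity, and the action of $R_D^*$ on $\operatorname{Ran}Q_{T^*}$ is the given densely-defined formula); applying $\omega_\tau$ and using $\tau A^* = A'^*\tau$, $\tau Q_{T^*} = Q_{T'^*}\tau$ gives $W_D'^n Q_{T'^*}A'^*\tau h = R_D'^* W_D'^n Q_{T'^*}\tau h = R_D'^* \omega_\tau(W_D^n Q_{T^*}h)$, so $\omega_\tau R_D^* = R_D'^* \omega_\tau$, i.e. $\omega_\tau R_D = R_D' \omega_\tau$ after taking adjoints (using that $\omega_\tau$ is unitary); the same computation with $B$ in place of $A$ handles $S_D$. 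It remains to note that $\omega_\tau$ is independent of the ambiguity in choosing the minimal unitary extension, which is standard since minimal unitary extensions are unique up to unitary equivalence fixing the base space.

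The main obstacle I anticipate is the bookkeeping in the first step: one must handle negative as well as nonnegative powers of $W_D$ appearing after one moves $W_D^{\pm 1}$ across in the inner-product computation, and confirm that the formula \eqref{omega-tau}, stated only for $n\geq 0$, genuinely determines an isometry on the whole closed span (which requires knowing that $\{W_D^n Q_{T^*}\cH : n\geq 0\}$ already spans a dense subspace, a consequence of the minimality of $W_D$ as the unitary extension of the isometry $X_{T^*}^*$ together with the Wold-type structure). Once the identity $\tau Q_{T^*} = Q_{T'^*}\tau$ and the minimality are in hand, everything else is a routine transcription of the unitary-equivalence hypothesis through the various defining formulas.
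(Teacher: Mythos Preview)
Your proposal is correct and follows essentially the same approach as the paper: verify the intertwining relations on the dense spanning set $\{W_D^n Q_{T^*}h : n\ge 0,\ h\in\cH\}$ after first establishing $\tau Q_{T^*}=Q_{T'^*}\tau$. Two minor differences are worth noting. First, you are more careful than the paper, which simply takes the well-definedness and unitarity of $\omega_\tau$ for granted; your inner-product reduction and surjectivity argument fill that gap. Second, for the $R_D$-intertwining you work directly with $R_D^*$ (using that $R_D^*$ commutes with $W_D$ and restricts to $A_{T^*}^*$ on $\overline{\operatorname{Ran}}\,Q_{T^*}$), whereas the paper takes a small detour through the tetrablock-unitary relation $W_D^*R_D=S_D^*$ to reduce to the action of $S_D^*$ on $\overline{\operatorname{Ran}}\,Q_{T^*}$. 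Your route is slightly more direct; both yield the same conclusion.
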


\begin{proof}
Let the spaces $\cQ_{T^*}, \cQ_{T^{'*}}$ and the operators $\{A_{T^*},B_{T^*},Q_{T^*}\}$, $\{A_{T'^*},B_{T'^*},Q_{T'^*}\}$ be obtained as above from $(A,B,T)$ and 
$(A',B',T')$, respectively. Since $\tau$ is a unitary intertwining $T$ and $T'$, it intertwines $T^*$ and $T'^*$ and thus $\tau Q_{T^*}=Q_{T'^*} \tau$. 
Therefore by definition \eqref{AB} it follows that $\tau(A_{T^*},B_{T^*},Q_{T^*})=(A_{T'^*},B_{T'^*},Q_{T'^*})\tau$.
By definition of $\omega_\tau$ it is clear that $\omega_\tau W_D=W_D'\omega_\tau$. Therefore for every $h\in\cH$ and $n\geq 0$,
\begin{align*}
\omega_\tau R_DW_D^n Q_{T^*}h&=\omega_\tau W_D^{n+1}W_D^*R_DQ_{T^*}h\\
&=\omega_\tau W_D^{n+1}S_D^*Q_{T^*}h \quad \mbox{[using Theorem \ref{T:IsoChar}, part (iv)]}\\
&=W_D'^{(n+1)}\tau B_{T^*}^*Q_{T^*}h=W_D'^{(n+1)} B_{T'^*}^* Q_{T'^*}\tau h\\
&=W_D'^{(n+1)} S_D'^* Q_{T'^*}\tau h\\
&=S_D'^*W_D'^{(n+1)}Q_{T^*} \quad \mbox{[since $S_D'^*W_D'=W_D'S_D'^*$]}\\
&=R_D' W_D'^nQ_{T'^*}\tau h\quad \mbox{[using Theorem \ref{T:IsoChar}, part (iii)]}\\
&=R_D'\omega_\tau W_D^nQ_{T^*}h.
\end{align*}
A similar computation shows that $\omega_\tau S_D=S_D'\omega_\tau$.
\end{proof}

\subsection{The fundamental operators}\label{SS:FundOps}
Much of the theory of tetrablock contractions is heavily based on a pair of operators that is uniquely associated with a tetrablock contraction. These are called the 
fundamental operators, the existence of which was proved in \cite{Tirtha14} with appeal to connections between tetrablock contractions and symmetrized-bidisk
contractions.   We state the result and sketch a more self-contained proof with the appeal to symmetrized-bidisk theory eliminated.
In the sequel  we shall use the notation $\nu(X)$ to denote the numerical radius of the operator $X$ on the Hilbert space $\cH$:
$$
\nu(X):= \sup_{x \in \cH\colon \| x \| =1} | \langle X x, x \rangle_\cH |.
$$

\begin{thm}\label{T:FundTetra}
Let $(A,B,T)$ be a tetrablock contraction on a Hilbert space $\mathcal H$.

\begin{enumerate}
\item[(i)]{\rm (See \cite[Theorem 3.4]{Tirtha14})} There exist two unique operators $F_1$ and $F_2$ acting on $\cD_T$ with the numerical radii at most one such that
\begin{align}\label{FundEqns}
A-B^*T=D_TF_1D_T \quad\text{and}\quad B-A^*T=D_TF_2D_T.
\end{align}
Moreover, the operators $F_1,F_2$ are such that $\nu(F_1+zF_2)\leq1$ for all $z\in \overline{\mathbb{D}}$.

\item[(ii)]{\rm (See \cite[Corollary 4.2]{Tirtha14})} The operators $F_1,F_2$ are alternatively characterized as the unique bounded operators on 
$\cD_T$ such that $(X_1, X_2) = (F_1, F_2)$ satisfies the system of operator equations 
\begin{align}\label{Determining}
D_TA=X_1D_T+X_2^*D_TT \quad\text{and}\quad D_TB=X_2D_T+X_1^*D_TT.
\end{align}
\end{enumerate}
\end{thm}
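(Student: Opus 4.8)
The plan is to prove Theorem \ref{T:FundTetra} in two stages, establishing existence and uniqueness of $F_1, F_2$ satisfying \eqref{FundEqns} first, then deriving the equivalent characterization \eqref{Determining}, and finally the pencil numerical-radius bound $\nu(F_1 + z F_2) \le 1$.

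For the existence part in (i), the key is to recognize that $A - B^* T$ must factor through $D_T$ on both sides. First I would verify that $\operatorname{Ran}(A - B^*T) \subseteq \overline{\operatorname{Ran}}\, D_T$ and, by taking adjoints and using the symmetry $(A,B,T) \leftrightarrow (B,A,T)$, also control $\operatorname{Ran}((A - B^*T)^*)$; this uses that $(A,B,T)$ is a tetrablock contraction in an essential way, most cleanly via the inequality obtained by testing the spectral-set condition \eqref{sp-ineq} against the rational functions $\Psi(z, \cdot)$ from Theorem \ref{T:CharacTetra}, which yield $\|A - B^*T\|$-type estimates after clearing denominators (the standard trick: $\sup_{z} \| (A - zT)(I - zB)^{-1}\| \le 1$ on $\overline{\bD}$ gives, upon extracting the $z^0$ and higher coefficients, operator inequalities relating $A - B^*T$ to $D_T^2 = I - T^*T$). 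Once the two-sided factorization $A - B^* T = D_T F_1 D_T$ is available with $F_1$ a bounded operator on $\overline{\operatorname{Ran}}\, D_T$ (extended by zero on the complement), I would do the same for $B - A^*T$ to produce $F_2$. Uniqueness is immediate: $D_T F_i D_T = D_T F_i' D_T$ forces $F_i = F_i'$ on $\overline{\operatorname{Ran}}\, D_T = \cD_T$.

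For (ii), I would show the two systems are equivalent by a direct algebraic manipulation. Starting from $A - B^*T = D_T F_1 D_T$, I would compute $D_T A$ and try to move the operators past $D_T$; the identity $T^* D_T^2 = D_T^2 T^*$ fails in general, so instead one uses $D_T^2 = I - T^*T$ and the commutation of $A, B$ with $T$ to rewrite $D_T A - F_1 D_T = B^* T \cdot(\text{stuff})$ and recognize the right-hand side as $F_2^* D_T T$, invoking the \emph{second} fundamental equation $B - A^*T = D_T F_2 D_T$ and its adjoint. This is the classical computation (due to Bhattacharyya) and amounts to checking that the pair $(F_1, F_2)$ solving \eqref{FundEqns} also solves \eqref{Determining} and conversely; uniqueness for \eqref{Determining} again follows because the equations determine $X_1 D_T + X_2^* D_T T$ and $X_2 D_T + X_1^* D_T T$ on $\cH$, and a short argument (subtract two solutions, use that $D_T$ and $D_T T$ jointly have dense-enough range, or appeal back to the uniqueness already proven in (i)) pins down $X_1, X_2$ on $\cD_T$.

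The main obstacle — and the step I would spend the most care on — is the pencil estimate $\nu(F_1 + z F_2) \le 1$ for all $z \in \overline{\bD}$, together with the individual bounds $\nu(F_i) \le 1$. The natural route is again through Theorem \ref{T:CharacTetra}: for fixed unimodular $z$, the rotated triple relates to $\Psi(\bar z, (A,B,T))$, and one shows that $A + zB$ (or the relevant combination) inherits a contraction-type property from $(A,B,T)$ being a tetrablock contraction, which after passing through the $D_T F_i D_T$ factorization becomes a statement that $\operatorname{Re}(e^{i\theta}(F_1 + zF_2))$ is dominated appropriately, i.e. $\nu(F_1 + zF_2) \le 1$. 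Concretely I expect to need: for $(A,B,T)$ a tetrablock contraction and each $\beta \in \overline{\bD}$, the operator $A + \beta B$ satisfies $\| (A + \beta B) h\|^2 \le \ldots$ linking to $\|D_T h\|^2$, which combined with \eqref{FundEqns} and the Cauchy–Schwarz/numerical-radius inequality $|\langle D_T F D_T h, h\rangle| = |\langle F D_T h, D_T h \rangle| \le \nu(F) \|D_T h\|^2$ closes the loop. I would also note the remark that one may instead cite \cite{Tirtha14} for the deeper parts, but present the self-contained argument via $\Psi$ as advertised in the text, flagging that the elimination of symmetrized-bidisk theory is exactly what the $\Psi$-substitution in Theorem \ref{T:CharacTetra} buys us.
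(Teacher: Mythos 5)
Your skeleton matches the paper's: both start from the contractivity of $\Psi(\zeta,(A,B,T))$ and $\Psi(\zeta,(B,A,T))$ for $\zeta\in\bT$, add the two resulting positivity conditions to obtain $D_T^2\succeq \operatorname{Re}\,\alpha\,[(A-B^*T)+\beta(B-A^*T)]$ for all $\alpha,\beta\in\bT$, and then aim to factor and read off the numerical-radius bounds (with a maximum-modulus argument extending the pencil bound from $\bT$ to $\overline{\bD}$). The gap is at the central step: you cannot get the two-sided factorization $A-B^*T=D_TF_1D_T$ by ``verifying range inclusions and applying the Douglas lemma on both sides.'' The Douglas lemma gives one-sided factorizations $\Sigma=D_TY$ and $\Sigma=Z^*D_T$, and these do not combine into $\Sigma=D_TFD_T$: to write $Y=FD_T$ you would need $Y^*Y\preceq c\,D_T^2$, which is not controlled by anything you have (you only control $\Sigma^*\Sigma=Y^*D_T^2Y$). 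Moreover, the operator inequality above is a numerical-range-type bound on $\Sigma$, not a bound on $\Sigma\Sigma^*$, so even the first Douglas step is not immediate. The paper closes exactly this gap with Lemma \ref{L:Tirtha14} (the quadratic, numerical-radius version of the Douglas lemma from \cite{B-P-SR}, proved via the operator Fej\'er--Riesz theorem of Dritschel--Rovnyak), applied to the \emph{whole pencil} $(A-B^*T)+\beta(B-A^*T)$ at once; this simultaneously delivers existence of a factorization $=D_TF(\beta)D_T$ and the sharp bound $\nu(F(\beta))\le 1$, after which $F_1,F_2$ are extracted as $\tfrac12(F(1)\pm F(-1))$ and identified with the pencil coefficients by the uniqueness statement $D_TXD_T=0\Rightarrow X=0$. (Your Cauchy--Schwarz remark is correct in the sense that \emph{once} some bounded $F$ with $\Sigma=D_TFD_T$ exists, testing the operator inequality on vectors $D_Th$ recovers $\nu(F)\le1$; but producing a bounded $F$ is the hard part and your sketch does not do it.)

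A second, smaller gap is the uniqueness in part (ii). Neither ``$D_T$ and $D_TT$ jointly have dense-enough range'' nor ``appeal back to the uniqueness in (i)'' works, because the homogeneous system $XD_T+Y^*D_TT=0$, $YD_T+X^*D_TT=0$ couples $X$ with $Y^*$ and does not reduce to the homogeneous version of \eqref{FundEqns}. The paper's argument is the iteration $D_TXD_T=-D_TY^*D_TT=T^*D_TXD_TT=\cdots=T^{*n}D_TXD_TT^n$ combined with $\|D_TT^nh\|^2=\|T^nh\|^2-\|T^{n+1}h\|^2\to0$, which forces $D_TXD_T=0$ and hence $X=0$ (and similarly $Y=0$). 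You should supply this or an equivalent argument; as written, your uniqueness for (ii) does not go through.
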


\begin{proof}
Let $(A,B,T)$ be a tetrablock contraction on $\cH$. Since for every $z\in\bD$, $\Psi(z,\cdot)$ as in item (ii) of Theorem \ref{T:CharacTetra} is analytic in an open set 
containing $\overline{\bE}$, and $\overline{\bE}$ is polynomially convex, a limiting argument  implies that $\Psi(\zeta,(A,B,T))$ is a contraction for every $\zeta\in\bT$,
 or equivalently, on simplifying $I-\Psi(\zeta,(A,B,T))^*\Psi(\zeta,(A,B,T))\succeq 0$ we get
\begin{align*}
    (I-T^*T)+(B^*B-A^*A) - \zeta(B-A^*T) - \overline{\zeta}(B-A^*T)^*\succeq 0.
\end{align*}Similarly, applying item (iii) of Theorem \ref{T:CharacTetra} we have for every $\alpha\in\bT$,
\begin{align*}
    (I-T^*T)+(A^*A-B^*B) - \alpha(A-B^*T) - \overline{\alpha}
(A-B^*T)^*\succeq 0.
\end{align*}
On adding the above two positive operators and then simplifying we get
\begin{align}\label{Posit3}
    D_T^2 \succeq \operatorname{Re}\alpha \left[(A-B^*T)+\beta(B-A^*T)\right]
\end{align}for every $\alpha,\beta\in\bT$. 
We now make use the following  lemma of independent interest:

\begin{lemma}{\rm (See \cite[Lemma 4.1]{B-P-SR})}  \label{L:Tirtha14}
Let $\Sigma$ and $D$ be two operators such that
$$
DD^*\succeq\operatorname{Re}\alpha\Sigma \quad\mbox{for all }\alpha\in\bT.
$$
Then there exists an operator $F$ acting on $\overline{\operatorname{Ran}} \, D^*$ with numerical radius at most one such that $\Sigma=DFD^*$.
\end{lemma}

\begin{proof}[Sketch of proof.]  Apply the Fejer-Riesz factorization theorem of Dritschel-Rovnyak   \cite[Theorem 2.1]{DM-FRThm} to the Laurent operator-valued polynomial 
$P(e^{i\theta}) = 2DD^*-e^{i\theta}\Sigma-e^{-i\theta}\Sigma^*$.   Along the way one makes use of the standard Douglas lemma ($\exists$ $X \in \cB(\cH)$ with
$AX = B$ $\Leftrightarrow$ $B B^* \preceq A A^*$) and a criterion for a
Hilbert space operator to have numerical radius at most $1$:  $X \in \cB(\cH)$ has $\nu(X) \le 1$ $\Leftrightarrow$ $\operatorname{Re} \, ( \beta X)  
\preceq I_\cH$ for all $\beta \in {\mathbb T}$.  Note that Lemma \ref{L:Tirtha14} can itself be viewed as a quadratic, numerical-radius version of the Douglas lemma.
\end{proof}

We apply Lemma \ref{L:Tirtha14}  to the case \eqref{Posit3} for each $\beta$ to get a numerical contraction $F(\beta)$ such that
\begin{align}\label{FundEqnsBeta}
(A-B^*T)+\beta(B-A^*T)=D_TF(\beta)D_T.    
\end{align}
On adding equations \eqref{FundEqnsBeta} for the cases $\beta=1$ and $-1$, we get
\begin{align}\label{beta=1,-1}
    A-B^*T=D_TF_1D_T\quad\mbox{where}\quad F_1:=\frac{F(1)+F(-1)}{2}.
\end{align}
Thus putting $\beta=1$ in \eqref{FundEqnsBeta} and combining with \eqref{FundEqnsBeta} gives us
\begin{equation}   \label{beta=1}
B-A^*T=D_T(F(1)-F_1)D_T=D_TF_2D_T\quad\mbox{where}\quad F_2:=\frac{F(1)-F(-1)}{2}.
\end{equation}
We conclude that $F_1$ and $F_2$ so constructed satisfy equations \eqref{FundEqns}.  It is easy to see that in general
\begin{equation}  \label{easy}
X \in \cB(\cD_T) \text{ with } D_T X D_T = 0 \Rightarrow X = 0.
\end{equation}
Applying this observation to the homogeneous version of equations \eqref{FundEqns} implies that the solutions $(F_1, F_2)$
of \eqref{FundEqns} must be unique whenever they exist.

On the other hand, if we combine \eqref{beta=1,-1} with \eqref{beta=1} we see that 
$\widetilde F(\beta) := F_1 + \beta F_2$ gives us a second solution of \eqref{FundEqnsBeta}. 
Hence 
$$
D_T (\widetilde F(\beta) - F(\beta)) D_T = 0 \text{ where } \widetilde F(\beta) - F(\beta) \in \cB(\cD_T) \text{ for all } \beta \in {\mathbb T}.
$$
Again by \eqref{easy}, we see that
$\widetilde F(\beta) = F(\beta)$ for all $\beta \in {\mathbb T}$.  But we saw above (as a consequence of Lemma \ref{L:Tirtha14}) that
$F(\beta)$ is a numerical contraction for all $\beta \in {\mathbb T}$.  As we now know that $\widetilde F(\beta) = F(\beta)$, we conclude that
the pencil $\widetilde F(\beta) = F_1 + \beta F_2$ is a numerical contraction for all $\beta \in {\mathbb T}$.  By applying the Maximum Modulus Theorem to the
holomorphic function $\langle (F_1 + \beta F_2)h, h \rangle$ for each fixed $h \in \cH$, we see that $F_1 + z F_2$ is a numerical contraction
for all $z \in \overline{\mathbb D}$.  This completes the proof of item (i) in Theorem \ref{T:FundTetra}.

 To see  that $F_1$ and $F_2$ satisfy equations \eqref{Determining}, simply multiply $D_T$ on the left of each equation and use the identities \eqref{FundEqns}
 to simplify.   To show that $F_1,F_2$ are the unique operators on $\cD_T$  satisfying these two equations, it is enough to show that  $X= 0$ and $Y=0$ 
 are the only operators in $\cB(\cD_T)$ satisfying
$$
XD_T+Y^*D_TT=0, \quad YD_T+X^*D_TT=0.
$$
To show that $X=0$, compute
\begin{align*}
    D_TXD_T=-D_TY^*D_TT=T^*D_TXD_TT&=-T^*D_TY^*D_TT^2\\
    &=T^{*2}D_TXD_TT^2.
\end{align*}
Thus by iteration of the above process $D_TXD_T=T^{*n}D_TXD_TT^n$. This shows that $X=0$ because for every $h\in\cH$
$$
\lim_n\|D_TT^nh\|^2=\lim_n\|T^nh\|^2-\lim_n\|T^{n+1}h\|^2=0.
$$
A similar argument gives $Y=0$. This is the idea of the proof due by Bhattacharyya \cite{Tirtha14}.
\end{proof}

The unique operators $F_1,F_2$ in item (i) of Theorem \ref{T:FundTetra} will be referred to as the {\bf fundamental operators} for the tetrablock contraction $(A,B,T)$, as in \cite{Tirtha14}.

\smallskip
As we have seen in Proposition \ref{P:symmetries}, if $(A,B,T)$ is a ${\mathbb E}$-contraction, so also is $(A^*, B^*, T^*)$.
For the construction of the functional model for a tetrablock contraction $(A,B,T)$, 
it turns out to be more convenient to work with the fundamental operators for the 
adjoint tetrablock contraction $(A^*, B^*, T^*)$ which we denote as $(G_1, G_2)$.  Thus there exists exactly one solution 
$(X_1, X_2)  = (G_1, G_2)$ of the system of equations
\begin{equation}   \label{FundEquns*}
A^* - B T^* = D_{T^*} X_1 D_{T^*}, \quad B^* - A T^* = D_{T^*} X_2 D_{T^*}
\end{equation}
with equivalent characterization as the unique solution $(X_1, X_2) = (G_1, G_2)$ of the second system of equations
\begin{equation}  \label{Determining*}
D_{T^*} A^* = X_1 D_{T^*} + X_2^* D_{T^*} T^*, \quad D_{T^*} B^* = X_2 D_{T^*} + X_1^* D_{T^*} T^*
\end{equation}
Then from Example \ref{E:model} we see immediately that the operator pair $(M_{G_1^* + z G_2}, M_{G_2^* + z G_1}, M_z)$ acting on the Hardy space 
$H^2(\cD_{T^*})$ is of the correct  form to be 
a pseudo-commutative tetrablock isometry. We would like to establish conditions under which this a priori only pseudo-commutative ${\mathbb E}$-isometry
is actually a strict ${\mathbb E}$-isometry.  This follows from the following result.

\begin{thm}  \label{T:pseudo-strict} {\rm (See \cite{Tirtha14}.)}  Suppose that $(G_1, G_2)$ is the Fundamental Operator pair for the ${\mathbb E}$-contraction 
$(A^*, B^*, T^*)$.  Let $(V_1, V_2, V_3)$ be the operator triple
\begin{equation}   \label{Eisom-candidate}
 (V_1, V_2, V_2) = (M_{G_1^* + z G_2}, M_{G_2^* + z G_1}, M_z) \text{ on } H^2(\cD_{T^*}).
\end{equation}
 Then:
 
 \begin{enumerate}
 \item $(V_1, V_2, V_3)$ is a pseudo-commutative tetrablock isometry having the additional property that
 $$
   r(V_1) \le 1, \quad r(V_2) \le 1.
 $$
 
\item  Suppose in addition that the Fundamental Operator pair $(G_1, G_2)$ satisfy the commutativity conditions.  Then
$(V_1, V_2, V_3)$ is a strict tetrablock isometry. 
 \end{enumerate}
 \end{thm}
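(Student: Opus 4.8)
The plan is to derive everything from the already-established structural facts about the Fundamental Operator pair $(G_1, G_2)$ of $(A^*, B^*, T^*)$ together with the model-theoretic results of Section 2. For part (1), the candidate triple $(V_1, V_2, V_3) = (M_{G_1^* + z G_2}, M_{G_2^* + z G_1}, M_z)$ on $H^2(\cD_{T^*})$ is exactly of the coupled-pencil form analyzed in item (1) of Example \ref{E:model}, so the computation there already shows $V_3$ is an isometry, that $V_1$ and $V_2$ commute with $V_3$, and that $V_1 = V_2^* V_3$ and $V_2 = V_1^* V_3$. Hence by Definition \ref{D:pc}, $(V_1, V_2, V_3)$ is a pseudo-commutative tetrablock isometry; no contractivity of $V_1$ or $V_2$ is needed for that. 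The content of part (1) is therefore the spectral-radius estimates $r(V_1) \le 1$ and $r(V_2) \le 1$. For this I would invoke Theorem \ref{T:FundTetra}(i): the pencil $G_1 + z G_2$ has numerical radius at most $1$ for all $z \in \overline{\mathbb D}$. The Laurent/Toeplitz operator $M_{G_1^* + \zeta G_2}$ on $L^2(\cD_{T^*})$ has norm $\sup_{\zeta \in {\mathbb T}} \| G_1^* + \zeta G_2 \|$, which need not be $\le 1$; but one checks, via the standard estimate relating the spectral radius of a Toeplitz operator with an operator-valued symbol to the numerical radius of the symbol on the circle (equivalently, iterating $V_1^n = M_{(G_1^* + z G_2)^n}$ plus the von Neumann-type bound that numerical radius $\le 1$ of $G_1 + \beta G_2$ gives a power-bounded-with-constant-$2$ estimate on the symbol powers), that $r(M_{G_1^* + z G_2}) \le \sup_{\beta \in {\mathbb T}} \nu(G_1^* + \beta G_2) \le 1$, and the same for $V_2 = M_{G_2^* + z G_1}$. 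I expect this spectral-radius estimate to be the main technical obstacle: one must be careful that $r(V_i) = \lim_n \|V_i^n\|^{1/n}$ and that the factor-of-$2$ in the numerical-radius power inequality washes out in the $n$th root limit.

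For part (2), one adds the hypothesis that $(G_1, G_2)$ satisfy the commutativity conditions \eqref{com=}, namely $G_1 G_2 = G_2 G_1$ and $G_1^* G_1 + G_2 G_2^* = G_1 G_1^* + G_2^* G_2$. I would then reverse the computation carried out in item (1) of Example \ref{E:model}: there it is shown, by expanding $AB$ and $BA$ in powers of $M_z$, that the commutativity $M_{G_1^* + z G_2} M_{G_2^* + z G_1} = M_{G_2^* + z G_1} M_{G_1^* + z G_2}$ is equivalent precisely to the pair of identities in \eqref{com=}. Hence under our hypothesis $V_1$ and $V_2$ commute. Combined with $r(V_1) \le 1$, $r(V_2) \le 1$ from part (1), the relation $V_2 = V_1^* V_3$, and $V_3 = M_z$ an isometry, we may apply criterion (v) $\Rightarrow$ (i) of Theorem \ref{T:IsoChar} (whose validity was noted in the proof of Theorem \ref{T:pc-vs-strict-Eisom}(2)) to conclude that $(V_1, V_2, V_3)$ is a strict tetrablock isometry; in particular $V_1$ and $V_2$ are automatically contractions. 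Alternatively, and more directly, one invokes Theorem \ref{T:pc-vs-strict-Eisom}(2) verbatim: a pseudo-commutative tetrablock isometry with $V_1 V_2 = V_2 V_1$ and $r(V_1) \le 1$, $r(V_2) \le 1$ is a strict tetrablock isometry. This packaging makes part (2) essentially immediate once part (1) and the Example \ref{E:model} commutator computation are in hand, so all the real work is concentrated in the spectral-radius bound of part (1).
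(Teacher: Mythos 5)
Your overall architecture matches the paper's: in part (1) the pseudo-commutativity is read off from Example \ref{E:model} exactly as the paper does, the real content is the spectral-radius bound, and part (2) is disposed of precisely as in the paper by reversing the commutator computation of Example \ref{E:model} to get $V_1V_2=V_2V_1$ from \eqref{com=} and then invoking Theorem \ref{T:pc-vs-strict-Eisom}(2). The one genuine gap is in the middle of part (1): Theorem \ref{T:FundTetra}(i) gives $\nu(G_1+zG_2)\le 1$, but the symbol of $V_1$ is $G_1^*+zG_2$, and you pass to $\sup_{\beta\in\bT}\nu(G_1^*+\beta G_2)\le 1$ without comment. This is not a formal triviality: the identity $\nu(X)=\nu(X^*)$ only converts $\nu(G_1+\overline{z}G_2)\le 1$ into $\nu(G_1^*+zG_2^*)\le 1$, which involves $G_2^*$ rather than $G_2$. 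The paper spends the bulk of its proof of (1) on exactly this conversion: it encodes the hypothesis via the And\^o numerical-radius criterion as $\beta(G_1+\alpha G_2)+\overline{\beta}(G_1^*+\overline{\alpha}G_2^*)\preceq 2I$ for all $\alpha,\beta\in\overline{\bD}$, regroups the four terms as $\overline{\beta}(G_1^*+\beta^2\alpha G_2)+\beta(G_1+\overline{\beta}^2\overline{\alpha}G_2^*)$, and reads off $\nu(G_1^*+zG_2)\le 1$ from the criterion applied in reverse. You need this step (or an equivalent) spelled out; as written your chain of inequalities silently replaces $G_1$ by $G_1^*$.

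Once that is repaired, your route from the numerical-radius bound on the symbol to $r(V_1)\le 1$ is correct but genuinely different from the paper's. You propose Berger's power inequality $\nu(X^n)\le\nu(X)^n$ together with $\|X^n\|\le 2\nu(X^n)$, applied pointwise in $z$, to get $\|V_1^n\|\le 2$ and hence $r(V_1)\le 1$ by the Gelfand formula; this works, and as you note the factor $2$ washes out in the $n$th root, but it imports the (nontrivial) power inequality. The paper instead uses only the elementary bound $r\le\nu$ pointwise to get $r(G_1^*+zG_2)\le 1$ for each $z\in\overline{\bD}$, and then observes that for $|\lam|>1$ the resolvent $z\mapsto(\lam I_{\cD_{T^*}}-(G_1^*+zG_2))^{-1}$ is a bounded analytic $\cB(\cD_{T^*})$-valued function, so multiplication by it inverts $\lam-V_1$ on $H^2(\cD_{T^*})$ and $\lam\notin\sigma(V_1)$. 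The paper's mechanism is more elementary; yours is shorter if one is willing to cite the power inequality. For $V_2$ both arguments proceed identically after noting, as the paper does, that $(G_2,G_1)$ is the Fundamental Operator pair for $(B^*,A^*,T^*)$.
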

 
 \begin{corollary} \label{C:strict} Let $(G_1, G_2)$ be the Fundamental Operator pair for the tetrablock contraction $(A^*, B^*, T^*)$ and set $(V_1, V_2, V_3)$
 as in \eqref{Eisom-candidate}.  Then $(V_1, V_2, V_3)$ is a (strict) tetrablock isometry if and only if the commutativity conditions  \eqref{com=} hold.
 \end{corollary}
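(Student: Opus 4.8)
The plan is to prove Corollary \ref{C:strict} as a quick consequence of Theorem \ref{T:pseudo-strict} together with the analysis already carried out in item (1) of Example \ref{E:model}. First, I would observe that the ``if'' direction is immediate: if the commutativity conditions \eqref{com=} hold, then part (2) of Theorem \ref{T:pseudo-strict} says directly that $(V_1, V_2, V_3)$ is a strict tetrablock isometry, so nothing remains to be done there.

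For the ``only if'' direction, suppose $(V_1, V_2, V_3) = (M_{G_1^* + z G_2}, M_{G_2^* + z G_1}, M_z)$ is a strict tetrablock isometry. By Definition of strict tetrablock isometry (equivalently criterion (iii) or (iv) of Theorem \ref{T:IsoChar}), in particular $V_1$ and $V_2$ commute: $M_{G_1^* + z G_2} M_{G_2^* + z G_1} = M_{G_2^* + z G_1} M_{G_1^* + z G_2}$. Now I would invoke exactly the computation displayed in item (1) of Example \ref{E:model}: expanding both products of analytic Toeplitz operators gives
$$
M_{G_1^* + z G_2} M_{G_2^* + z G_1} = I_{H^2} \otimes G_1^* G_2^* + M_z \otimes (G_1^* G_1 + G_2 G_2^*) + M_z^2 \otimes G_1 G_2,
$$
$$
M_{G_2^* + z G_1} M_{G_1^* + z G_2} = I_{H^2} \otimes G_2^* G_1^* + M_z \otimes (G_1 G_1^* + G_2^* G_2) + M_z^2 \otimes G_2 G_1,
$$
and equating the operator coefficients of the linearly independent functions $1$, $z$, $z^2$ in $H^2$ forces $G_1^* G_2^* = G_2^* G_1^*$, the pair of identities $G_1^* G_1 + G_2 G_2^* = G_1 G_1^* + G_2^* G_2$, and $G_1 G_2 = G_2 G_1$. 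The first and third are equivalent (one is the adjoint of the other), so what we obtain is precisely the system \eqref{com=}. This establishes the ``only if'' direction.

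I expect no real obstacle here: this corollary is essentially a restatement of the equivalence already made explicit in Example \ref{E:model}, packaged so that it applies to the specific triple built from the Fundamental Operator pair $(G_1, G_2)$ of $(A^*, B^*, T^*)$. The only mild point of care is to note that one should equate coefficients of Toeplitz operators on $H^2(\cD_{T^*})$, which is legitimate because the monomials $1, z, z^2$ are linearly independent in $H^2$ and the coefficient operators live on the coefficient space $\cD_{T^*}$; this is routine and already used above in the proof of Theorem \ref{T:WoldPC}. One could alternatively phrase the argument by passing to boundary values and comparing the Laurent (Fourier) coefficients of the operator-valued trigonometric polynomial $(G_1^* + \zeta G_2)(G_2^* + \zeta G_1) - (G_2^* + \zeta G_1)(G_1^* + \zeta G_2)$, but the $H^2$ coefficient-matching is cleaner and matches the bookkeeping already present in Example \ref{E:model}.
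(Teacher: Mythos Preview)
Your proof is correct and follows essentially the same approach as the paper. The paper's own proof is even more terse: for the ``only if'' direction it simply says that if $(V_1,V_2,V_3)$ is a tetrablock isometry then in particular it is commutative, ``so the commutativity conditions \eqref{com=} are satisfied,'' leaving the coefficient-matching step as an implicit back-reference to Example \ref{E:model}; you have just spelled that step out in full.
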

 
 \begin{proof}[Proof of Corollary \ref{C:strict}]  If the commutativity conditions \eqref{com=} are satisfied, then statement (2) of Theorem \ref{T:pseudo-strict}
 says that $(V_1, V_2, V_3)$ is a tetrablock isometry.  Conversely, if $(V_1, V_2, V_3)$ is a tetrablock isometry, in particular $(V_1, V_2, V_3)$ is
 commutative so the commutativity conditions \eqref{com=} are satisfied.
  \end{proof}

\begin{proof}[Proof of Theorem \ref{T:pseudo-strict}]
We first consider statement (1).  That $(V_1, V_2, V_3)$ is a pseudo-commutative tetrablock isometry follows from the fact that it has the required form \eqref{pcform}
as presented in Example \ref{E:model}.  It remains to use the fact that $(G_1, G_2)$ is a Fundamental Operator pair for the tetrablock contraction $(A^*, B^*, T^*)$
(to see that we also have $r(V_1) \le 1$ and $r(V_2) \le 1$.  

By Theorem \ref{T:FundTetra} (applied to $(A^*, B^*, T^*)$ in place of $(A,B,T)$), we know that
$\nu(G_1 + z G_2) \le 1$ for all  $z \in \overline{\mathbb D}$.  By the And\^o criterion for the numerical radius of an operator to be no more than 1, this means that
$$
\beta (G_1 + \alpha G_2) + \overline{\beta} (G_1^* + \overline{\alpha} G_2^*) \preceq 2 I_{\cD_{T^*}} \forall \alpha, \beta \in \overline{\mathbb D}.
$$
Rearrange this inequality as
\begin{align*}
& (\overline{\beta} G_1^* + \beta \alpha G_2) + (  \beta G_1 + \overline{\beta}  \overline{\alpha} G_2^*)  \\
& \quad = \overline{\beta} (G_1^* + \beta^2 \alpha G_2) + \beta (G_1 + \overline{\beta}^2 \overline{\alpha} G_2^*) \preceq 2I_{\cD_{T^*}}.
\end{align*}
By the And\^o criterion applied in the reverse direction, this tells us that
$$
   \nu(G_1^* + z G_2) \le 1 \text{ for all } z \in \overline{\mathbb D}.
$$
But in general the numerical radius dominates the spectral radius; thus
$$
  r(G_1^* + z G_2) \le \nu(G_1^* + z G_2) \le 1 \text{ for all } z \in \overline{\mathbb D}.
$$
If we choose $\lam \in {\mathbb C}$ with $|\lam| > 1$, then $\lam I_{\cD_{T^*}} - (G_2^* + z G_1)$ is invertible, and in fact
the operator-valued function $z \mapsto (\lam I_{\cD_{T^*}} - (G_2^* + z G_1))^{-1}$ is in $H^\infty(\cB(\cD_{T^*})$.  We thus conclude that in fact
$$
  r(M_{G_2^* + z G_1}) \le 1.
$$

All the above analysis applies to the pair $(G_2, G_1)$ in place of $(G_1, G_2)$, as $(G_2, G_1)$ is the Fundamental Operator pair for the ${\mathbb E}$-contraction
$(B^*, A^*, T^*)$;  hence we also have also
$$
 r(M_{G_2^* + z G_1}) \le 1.
$$
This completes the proof of statement (1).

We now consider statement (2).  As we are now assuming that  $(G_1, G_2)$ satisfy the commutativity conditions \eqref{com=},  it follows that
$M_{G_1^* + z G_2}$ commutes with $M_{G_2^* + z G_1}$.
We are now in a position to apply statement (3) in Remark \ref{R:pseudocom} (or, what is the same,  statement (2) in Theorem \ref{T:pc-vs-strict-Eisom}
to conclude that the a priori only pseudo-commutative ${\mathbb E}$-contraction
$ (M_{G_1^* + z G_2}, M_{G_2^* + z G_1},$ $ M_z)$ is in fact a strict ${\mathbb E}$-contraction.
\end{proof}

\section{Functional models for tetrablock contractions}\label{S:fmTetr}
In this section we produce two functional models for tetrablock contractions, the first  inspired by model theory of Douglas 
\cite{Doug-Dilation}, and the second  by the model theory of Sz.-Nagy and Foias \cite{Nagy-Foias}. We note that so far only a functional model is 
known for the special case when the last entry is a pure contraction; see \cite[Theorem 4.2]{SauNYJM}.

\subsection{A Douglas-type functional model}\label{SS:fmTet}
Let $T$ be any contraction on $\cH$. Define the operators $\cO_{D_{T^*}, T^*}:\cH\to H^2(\cD_{T^*})$ as
\begin{align}\label{observ}
 \cO_{D_{T^*}, T^*}(z) h = \sum_{n=0}^\infty z^nD_{T^*} T^{*n} h, \text{ for every }h\in\cH,
\end{align}
and $\Pi_D:\cH\to\sbm{H^2(\cD_{T^*})\\ \cQ_{T^*}}$ by
\begin{align}\label{Pi-D}
\Pi_Dh=
\begin{bmatrix}
 \cO_{D_{T^*}, T^*}(z) h \\ Q_{T^*}h\end{bmatrix} \quad\text{for all } h\in\cH,
\end{align}
where $Q_{T^*}$ is as in \eqref{Q}. Then the computation
\begin{align*}
\|\Pi_D h\|^2=&\| \cO_{D_{T^*}, T^*}(z) h\|_{H^2(\cD_{T^*})}^2+ \|Q_{T^*}h\|^2\\
&=\sum_{n=0}^\infty \|D_{T^*} T^{*n} h\|^2+\lim_{n\to\infty}\|T^{*n}h\|^2\\
&=(\|h\|^2-\|T^*h\|^2)+(\|T^{*}h\|^2-\|T^{*2}h\|^2)+\cdots) +\lim_{n\to\infty}\|T^{*n}h\|^2\\
&=\|h\|^2
\end{align*}
shows that $\Pi_D$ is an isometry. Note the following intertwining property of $\cO_{D_{T^*}, T^*}$:
\begin{align}
 \cO_{D_{T^*}, T^*}(z)T^*h & =\sum_{n=0}^\infty z^nD_{T^*} T^{*n+1} h=M_z^*\sum_{n=0}^\infty z^nD_{T^*} T^{*n} h \notag  \\
& =M_z^* \cO_{D_{T^*}, T^*}(z)h.
\end{align}
This together with intertwining \eqref{theX} of $Q_{T^*}$ implies
\begin{align}\label{lift}
\Pi_DT^*=
\begin{bmatrix}
M_z&0\\0&W_D 
\end{bmatrix}^*\Pi_D.
\end{align}
This shows that the the pair
$$
V_D:=\begin{bmatrix} M_z&0\\0&W_D \end{bmatrix}:   \begin{bmatrix} H^2(\cD_{T^*})\\ \cQ_{T^*} \end{bmatrix}
\to \begin{bmatrix} H^2(\cD_{T^*})\\ \cQ_{T^*} \end{bmatrix}
$$
is an isometric lift of $T$. This construction is by Douglas, where he also showed that this lift is minimal (see \cite{Doug-Dilation}).

Now let $(A,B,T)$ be a tetrablock contraction acting on $\cH$ and $G_1,G_2$ be the fundamental operators of $(A^*,B^*,T^*)$. 
Let $(R_D,S_D,W_D)$ acting on $\cQ_{T^*}$ be the canonical tetrablock unitary associated with $(A,B,T)$. Consider the operators
\begin{align}
\left(\begin{bmatrix}
M_{G_1^*+zG_2}&0\\0&R_D
\end{bmatrix},\begin{bmatrix}
M_{G_2^*+zG_1}&0\\0&S_D
\end{bmatrix},\begin{bmatrix}
M_z&0\\0& W_D
\end{bmatrix}\right)
\mbox{ on }\begin{bmatrix}
H^2(\cD_{T^*})\\ \cQ_{T^*}
\end{bmatrix}.
\end{align}
We claim that 
\begin{align}\label{claim}
\notag &\Pi_D(A^*,B^*,T^*)\\
&=
\left(\begin{bmatrix}
M_{G_1^*+zG_2}&0\\0&R_D
\end{bmatrix}^*,\begin{bmatrix}
M_{G_2^*+zG_1}&0\\0&S_D
\end{bmatrix}^*,\begin{bmatrix}
M_z&0\\0& W_D
\end{bmatrix}^*\right)\Pi_D,
\end{align}
where $\Pi_D:\cH\to \sbm{H^2(\cD_{T^*})\\ \cQ_{T^*}}$ is the isometry as in \eqref{Pi-D}. 

Recalling the definition $\Pi_D = \sbm{ \cO_{D_{T^*}, T^*} \\ \cQ_{T^*}} \colon \cH \to \sbm{H^2(\cD_{T^*}) \\ \cQ_{T^*}}$, we see that the three-fold
intertwining condition \eqref{claim} splits into two three-fold intertwining conditions
\begin{align}
& \cO_{D_{T^*}, T^*} (A^*, B^*, T^*) =  (M^*_{G_1^* + z G_2}, M^*_{G_2^* + z G_1}, M^*_z) \cO_{D_{T^*}, T^*}, \label{claim1} \\
&  Q_{T^*} (A^*, B^*, T^*) = (R^*_D, S^*_D, W^*_D) Q_{T^*}.  \label{claim2}
\end{align}
The last equation in \eqref{claim1} combined with the last equation in \eqref{claim2} we have already seen as the condition that $\Pi_D$ is the isometric 
identification map implementing $\sbm{ M_z & 0 \\ 0 & W_D}$ as the Douglas minimal isometric lift of $T$ (see \eqref{lift}).  We shall next check only the first equation
in \eqref{claim1} and the first equation in \eqref{claim2} as the verification of the respective second equations is completely analogous.  Thus it remains to check
\begin{align}
& \cO_{D_{T^*}, T^*} A^* = M^*_{G_1^* + z  G_2} \cO_{D_{T^*}, T^*},  \label{claim1'} \\
& Q_{T^*} A^* = R^*_D Q_{T^*}.  \label{claim2'}
\end{align}
Note that \eqref{claim2'} is part of the construction of the canonical tetrablock unitary associated with the original tetrablock contraction $(A,B,T)$ (see \eqref{AB}).
To check \eqref{claim1'}, let us rewrite the condition in function form:
$$
D_{T^*} (I - z T^*)^{-1} A^*  = G_1 D_{T^*} (I - z T^*)^{-1} + G_2^* D_{T^*}(I - z T^*)^{-1} T^*
$$
As $A$ commutes with $T$, we can rewrite this as
$$
D_{T^*} A^* (I - z T^*)^{-1} = (G_1 D_{T^*} + G_2^* D_{T^*} T^*) (I - z T^*)^{-1}.
$$
We may now cancel off the resolvent term $(I - z T^*)^{-1}$ to get a pure operator equation
\begin{equation}  \label{need}
D_{T^*} A^* = G_1 D_{T^*} + G_2^* D_{T^*} T^*.
\end{equation}
Let us now recall that the operators $(G_1, G_2)$ on $\cD_{T^*}$ were chosen to be the Fundamental Operators for the tetrablock contraction
$(A^*, B^*, T^*)$.  Thus by our earlier discussion of Fundamental Operators for tetrablock contractions (see Theorem \ref{T:FundTetra}), we know that 
$(X_1, X_2) = (G_1, G_2)$ satisfies the identities \eqref{Determining*}, the first of which gives the same condition on $(G_1, G_2)$ as \eqref{need}.
 Thus this choice of $(G_1, G_2)$ indeed leads to a solution of \eqref{claim1'}, and the proof of \eqref{claim} is complete.  (Of course the second equation
 in \eqref{Determining*} amounts to the verification of the second equation in \eqref{claim1}.)

This then means that the operator triple with isometric embedding operator $\Pi_D$
\begin{align} 
& (V_1, V_2, V_3) :=   \notag \\
& \quad\bigg(\Pi_D, \bigg( \begin{bmatrix} M_{G_1^* + z G_2} & 0 \\ 0 & R_D \end{bmatrix}, 
\begin{bmatrix} M_{G_2^* + z G_1} & 0 \\ 0 & S_D \end{bmatrix}, \begin{bmatrix}  M_z & 0 \\ 0 & W_D \end{bmatrix}  \bigg)\bigg)  \label{candidate}
\end{align}
is a lift of the tetrablock contraction $(A,B,T)$.  As $(R_D, S_D, W_D)$ is ${\mathbb E}$-unitary as part of the canonical construction in Section
\ref{SS:canonical}, we see from the form of the top components of $(V_1, V_2, V_3)$ in \eqref{candidate} that $(V_1, V_2, V_3)$ is a semi-strict
${\mathbb E}$-isometry and is a strict ${\mathbb E}$-isometry exactly when the top-component triple
$( M_{G_1^* + z G_2}, M_{G_2^* + z G_1}, M_z)$ is a strict ${\mathbb E}$-isometry.  By Theorem \ref{T:pseudo-strict}, this in turn happens exactly
when the Fundamental Operator pair $(G_1, G_2)$ satisfies the commutativity conditions \eqref{com=}.
In summary we have verified most of the following result.  We note that item (2) recovers a result of Bhattacharyya-Sau \cite{BS-CAOT}
via functional-model methods rather than by block-matrix-construction methods.

\begin{thm}\label{T:Dmodel}
{\rm (1)}
Let $(A,B,T)$ be a tetrablock contraction on $\cH$ and let $V_3$ on $\cK \supset \cH$ be the (essentially unique) minimal isometric lift for the 
contraction operator $T$.
Then there is a unique choice of operators $(V_1, V_2)$ on $\cK$ so that the triple $\bfV = (V_1, V_2, V_3)$ is a semi-strict tetrablock isometric lift for $(A,B,T)$.

\smallskip

\noindent
{\rm (2)} A necessary and sufficient condition that there be a strict tetrablock isometric lift
$\bfV = (V_1, V_2, V_3)$ for $(A,B,T)$ with the isometry $V_3$ equal to a minimal isometric lift for $T$ is that the fundamental operators $(G_1, G_2)$ for the adjoint tetrablock contraction $(A^*, B^*, T^*)$ satisfy the system of operator equations \eqref{com=}.  In this case the operator pair $(V_1, V_2)$ on $\cK$ 
is uniquely determined once one fixes a choice (essentially unique) for a minimal isometric lift $V_3$   for $T$.

\smallskip

\noindent
{\rm (3)}
The lift $(V_1, V_2, V_3)$ can be given in functional form in the coordinates of the Douglas model as follows.  
For $(A,B,T)$ equal to  a tetrablock contraction on a Hilbert space $\mathcal{H}$ let $(G_1,G_2)$ be the fundamental operators of $(A^*,B^*,T^*)$,
let $(R_D, S_D, W_D)$ be the tetrablock unitary canonically associated with $(A,B,T)$ as in  Definition \ref{D:canonical},
and let
$\Pi_D = \sbm{ \cO_{D_{T^*}, T^*} \\ Q_{T^*}}$ be the Douglas isometric embedding map from $\cH$ into $\sbm{ H^2(\cD_{T*}) \\  \cQ_{T^*} }$.
Then 
\begin{equation}  \label{unique-lift}
\bigg(  \Pi_D, \bigg( \begin{bmatrix} M_{G_1^* + z G_2} & 0 \\ 0 & R_D \end{bmatrix}, 
\begin{bmatrix} M_{G_2^* + z G_1} & 0 \\ 0 & S_D \end{bmatrix}, \begin{bmatrix}  M_z & 0 \\ 0 & W_D \end{bmatrix}  \bigg) \bigg)
\end{equation}
is a semi-strict (strict exactly when $(G_1, G_2)$ satisfies \eqref{com=}) tetrablock isometric lift for $(A,B,T)$.  In particular $(A,B,T)$ is  jointly unitarily equivalent to
\begin{align}\label{SemiModel}
    P_{\cH_D}\left(\begin{bmatrix} M_{G_1^*+zG_2}&0\\0&R_D \end{bmatrix},\begin{bmatrix} M_{G_2^*+zG_1}&0\\0&S_D\end{bmatrix}, 
\begin{bmatrix} M_z&0\\0& W_D \end{bmatrix}\right) \bigg|_{\cH_D},
\end{align}
where $\cH_D$ is the functional model space given by
\begin{align}\label{HD}
\cH_D:=\operatorname{Ran}\Pi_D\subset \begin{bmatrix} H^2(\cD_{T^*}) \\ \cQ_{T^*} \end{bmatrix}.
\end{align}
and any other semi-strict ${\mathbb E}$-isometric lift $(V'_1, V'_2, V'_3)$ with $V'_3 = \sbm{ M_z & 0 \\ 0 & W_D}$ on $\sbm{ H^2(\cD_{T^*}) \\ \cQ_{T^*}}$
is equal to \eqref{unique-lift}.
\end{thm}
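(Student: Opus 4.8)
The statement of Theorem \ref{T:Dmodel} has three items, but the preceding discussion has already done the bulk of the work: the intertwining identity \eqref{claim} has been verified, so the triple in \eqref{unique-lift} is a lift of $(A,B,T)$, and the canonical construction of Section \ref{SS:canonical} together with Theorem \ref{T:pseudo-strict} tells us precisely when it is semi-strict versus strict. So the remaining content is (a) the \emph{existence} of the semi-strict lift with $V_3$ equal to the minimal isometric lift of $T$ (item (1)), (b) the sharpening to a strict lift under the commutativity conditions \eqref{com=} (item (2)), and (c) the \emph{uniqueness} assertion --- that once $V_3$ is fixed as a minimal isometric lift, the pair $(V_1,V_2)$ is determined --- which is the only genuinely new piece.

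\textbf{First} I would dispose of existence. The Douglas construction (recalled at the start of Section \ref{SS:fmTet}) shows $V_3 = \sbm{M_z & 0 \\ 0 & W_D}$ on $\sbm{H^2(\cD_{T^*}) \\ \cQ_{T^*}}$ is a minimal isometric lift of $T$ via the embedding $\Pi_D$; by the essential uniqueness of the minimal isometric lift, any other choice of $V_3$ is unitarily equivalent to this one, so it suffices to prove everything in the Douglas coordinates. Then \eqref{claim} (which I may assume, having been verified in the text) shows that $(V_1,V_2,V_3)$ of \eqref{unique-lift} is a lift; the top components have the pencil form \eqref{pcform}, so by Example \ref{E:model} and Corollary \ref{C:pc} the top triple is a pseudo-commutative tetrablock isometry, while $(R_D,S_D,W_D)$ is a strict tetrablock unitary by its canonical construction, so $(V_1,V_2,V_3)$ is semi-strict in the sense of Remark \ref{R:semi-strict}. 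This establishes item (1) (existence) and formula \eqref{SemiModel}, which is just a restatement of the intertwining \eqref{claim} together with $\Pi_D^*\Pi_D = I_\cH$. Item (2) is then immediate: Theorem \ref{T:pseudo-strict}(2) (equivalently Corollary \ref{C:strict}) says the top triple $(M_{G_1^*+zG_2},M_{G_2^*+zG_1},M_z)$ is a \emph{strict} tetrablock isometry exactly when $(G_1,G_2)$ satisfies \eqref{com=}, and since the bottom triple is already strict, the whole lift is strict under exactly this condition, and conversely a strict lift forces $V_1,V_2$ to commute and hence forces \eqref{com=}.

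\textbf{The main work is uniqueness.} Suppose $(V_1',V_2',V_3')$ is any semi-strict tetrablock isometric lift of $(A,B,T)$ with $V_3' = \sbm{M_z & 0 \\ 0 & W_D}$ on $\sbm{H^2(\cD_{T^*}) \\ \cQ_{T^*}}$ and embedding $\Pi_D$. Writing $V_1' = \sbm{P & Q \\ R & R_D'}$ in block form relative to $\sbm{H^2(\cD_{T^*}) \\ \cQ_{T^*}}$, I would extract four pieces of information. (i) The lift condition $P_\cH V_1'|_{\cH_D} = A$ together with the coextension property pins down $V_1'$ on the range of $\Pi_D$. (ii) From $V_1'V_3' = V_3'V_1'$ one equates $(1,2)$-entries to get $QW_D = M_z Q$; since $W_D^{*n}\to 0$ strongly is false in general (as $W_D$ is unitary!) --- wait, one rather uses that $W_D$ is unitary and $M_z^{*n}\to0$ strongly on $H^2$, so by Lemma \ref{L:zero} (with the roles $W=W_D$, $S=M_z$) one gets $Q=0$. (iii) From the identity $V_1' = (V_2')^* V_3'$ (criterion (iii) of Theorem \ref{T:IsoChar}, extended to the pseudo-commutative case via Definition \ref{D:pc}), comparing $(2,1)$-entries forces $R=0$, so $V_1'$ is block diagonal, $V_1' = \sbm{P & 0 \\ 0 & R_D'}$. (iv) Then $PM_z = M_zP$ forces $P = M_\Phi$ for a bounded analytic $\cB(\cD_{T^*})$-valued symbol, and the relations $P = (P')^* M_z$, $P' = P^* M_z$ (where $P' = M_\Psi$ is the analogous $(1,1)$-block of $V_2'$) force $\Phi(z) = G_1'^* + zG_2'$, $\Psi(z) = G_2'^* + zG_1'$ for operators $G_1', G_2'$ on $\cD_{T^*}$, exactly as in the proof of Theorem \ref{T:WoldPC}. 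Finally the lift condition, via the intertwining $\cO_{D_{T^*},T^*}A^* = M_\Phi^* \cO_{D_{T^*},T^*}$, yields $D_{T^*}A^* = G_1' D_{T^*} + G_2'^* D_{T^*}T^*$ and the companion equation for $B$; but these are precisely the defining equations \eqref{Determining*} of the fundamental operators, so by the uniqueness half of Theorem \ref{T:FundTetra} we conclude $(G_1', G_2') = (G_1, G_2)$, hence $P = M_{G_1^*+zG_2}$, $P' = M_{G_2^*+zG_1}$. For the bottom corner, $R_D'$ commutes with $W_D$ and $R_D' = (S_D')^* W_D$ with $(S_D', R_D', W_D)$ a strict tetrablock unitary extending nothing beyond $\cQ_{T^*}$; the lift condition $Q_{T^*} A^* = (R_D')^* Q_{T^*}$ together with the minimality of $W_D$ as the unitary extension of $X_{T^*}^*$ forces $R_D'$ to agree with $R_D$ on $\overline{\operatorname{Ran}}\,Q_{T^*}$ and hence (by decomposability and the spanning property $\bigvee_n W_D^n \overline{\operatorname{Ran}}\,Q_{T^*} = \cQ_{T^*}$) on all of $\cQ_{T^*}$, and similarly $S_D' = S_D$. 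Thus $(V_1',V_2',V_3')$ coincides with \eqref{unique-lift}.

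\textbf{Anticipated obstacle.} The step I expect to require the most care is (iv) combined with the bottom-corner uniqueness: the argument that the $(1,1)$-block must have the coupled linear pencil form is the "tedious power-series computation" alluded to in the proof of Theorem \ref{T:WoldPC}, and one must be careful that the relation $V_1' = (V_2')^* V_3'$ genuinely holds for a semi-strict (not merely pseudo-commutative) lift --- this is where Definition \ref{D:pc} and the semi-strictness hypothesis are used, and it is the linchpin that forces block-diagonality and hence reduces the uniqueness question to the already-proved uniqueness of the fundamental operators and the minimality of the canonical tetrablock unitary. Everything else is bookkeeping with Lemma \ref{L:zero}, Theorem \ref{T:FundTetra}, and the canonical construction of Section \ref{SS:canonical}.
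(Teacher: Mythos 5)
Your proposal is correct and follows essentially the same route as the paper: existence and item (2) are read off from the preceding intertwining computation together with Theorem \ref{T:pseudo-strict}, and uniqueness is proved by forcing block-diagonality via Lemma \ref{L:zero} and the relation $V_1'=(V_2')^*V_3'$, identifying the $(1,1)$-blocks as pencils whose coefficients satisfy \eqref{Determining*} (hence equal $(G_1,G_2)$ by the uniqueness part of Theorem \ref{T:FundTetra}), and pinning down the $(2,2)$-blocks by the density of $\{W_D^nQ_{T^*}h\}$ in $\cQ_{T^*}$. The only cosmetic difference is that the paper runs the uniqueness argument for the slightly larger class of pseudo-commutative lifts, whereas you restrict to semi-strict lifts, which is all the theorem statement requires.
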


\begin{proof}  The discussion preceding the statement of the theorem amounts to a proof of statement (3) in Theorem \ref{T:Dmodel}.    Statements (1) and (2)
apart from the uniqueness assertion amounts to a coordinate-free (abstract, model-free) interpretation of the results of statement (3).  It remains only
to discuss the uniqueness assertion in statements (1) and (2).  This can also be formulated in terms of the model as follows:
{\em Given a tetrablock contraction $(A,B,T)$, let $\Pi_D = \sbm{ \cO_{D_{T^*}, T^*} \\ Q_{T^*}}$ be the Douglas embedding map and let $\cK_D= \sbm{ H^2(\cD_{T^*})
\\ \cQ_{T^*}}$ be the Douglas minimal isometric lift space for $T$ with $V_D = \sbm{ M_z & 0 \\ 0 & W_D}$  on $\cK_D$ equal to the Douglas minimal
isometric lift for $T$.  Suppose that 
$$
 \tA = \begin{bmatrix} \tA_{11} & \tA_{12} \\ \tA_{21} & \tA_{22} \end{bmatrix}, \quad
 \tB = \begin{bmatrix} \tB_{11}  & \tB_{12} \\ \tB_{21} & \tB_{22} \end{bmatrix}
$$
 are two operators on $\cK_D = \sbm{ H^2(\cD_{T^*} \\ \cQ_{T^*} }$ such that
 $ \big(\tA, \tB, \sbm{ M_z & 0 \\ 0 & W_D}\big)$ is a pseudo-commutative tetrablock isometric lift for $T$.    Then necessarily}
 $$
\tA  =  \begin{bmatrix} M_{G_1^* + z G_2} & 0 \\ 0 & R_D \end{bmatrix} , \, \tB = \begin{bmatrix} M_{G_2^* + z G_1} & 0 \\ 0 & S_D \end{bmatrix}.
$$
{\em where the pair $(G_1, G_2)$ is equal to the pair of Fundamental Operators for the  tetrablock contraction $(A^*, B^*, T^*)$, and where
$(R_D, S_D, W_D)$ is the tetrablock unitary canonically associated with the tetrablock contraction $(A, B, T)$ as in Definition \ref{D:canonical}.}

To prove this model-theoretic reformulation of the uniqueness problem,  we proceed as follows.  We are given  first of all that the triple
\begin{align}\label{pcLift}
\bigg( \tA = \begin{bmatrix} \tA_{11} & \tA_{12} \\ \tA_{21} & \tA_{22} \end{bmatrix}, \quad
 \tB = \begin{bmatrix} \tB_{11}  & \tB_{12} \\ \tB_{21} & \tB_{22} \end{bmatrix}, \quad 
 \begin{bmatrix} M_z & 0 \\ 0 & W_D \end{bmatrix}  \bigg)
 \end{align}
 is a pseudo-commutative tetrablock isometry. According to Definition \ref{D:pc} we have the following:
 \begin{itemize}
 \item{(i)} $ \tA \sbm{ M_z & 0 \\ 0 & W_D} = \sbm{ M_z & 0 \\ 0 & W_D} \tA$ and $ \tB \sbm{ M_z & 0 \\ 0 & W_D} = \sbm{ M_z & 0 \\ 0 & W_D} \tB$;
 
 \item{(ii)} $\tB = \tA^* \sbm{ M_z & 0 \\ 0 & W_D },$ and $\tB = \tA^* \sbm{ M_z & 0 \\ 0 & W_D }$; and
 
 \item{(iii)} $\| \tA \| \le 1$.
 \end{itemize}
 As in the proof of Theorem \ref{T:WoldPC}, conditions (i) ad (ii) force
 $\tA$ and $\tB$ to have the block-diagonal form 
 $$(\tA,\tB) = \left(\begin{bmatrix}
 M_{\tG_1^*+z\tG_2} & 0 \\ 0 & \tA_{22}
\end{bmatrix}  , \begin{bmatrix} M_{\tG_2^*+z\tG_1}& 0 \\ 0 & \tB_{22}\end{bmatrix}\right).$$
for some pseudo-commutative tetrablock unitary $(\tA_{22},\tB_{22},W_D)$, and operators $\tG_1,\tG_2\in\cB(\cD_{T^*})$ so that the linear pencils $\tG_1^*+z\tG_2$ and $\tG_1^*+z\tG_2$ are contraction-valued for all $z\in\bD$. We now use the fact that the triple \eqref{pcLift} is a pseudo-commutative lift of $(A,B,T)$, i.e., the operators $\tA,\tB$ satisfy the conditions
$$
\begin{bmatrix}
 M_{\tG_1^*+z\tG_2}^* & 0 \\ 0 & \tA_{22}^*
\end{bmatrix} \begin{bmatrix}
\cO_{D_{T^*},T^*}\\ Q_{T^*}
\end{bmatrix}=\begin{bmatrix}
\cO_{D_{T^*},T^*}\\ Q_{T^*}
\end{bmatrix}A^*
$$
and
$$
\begin{bmatrix} M_{\tG_2^*+z\tG_1}^*& 0 \\ 0 & \tB_{22}^*\end{bmatrix}
\begin{bmatrix}
\cO_{D_{T^*},T^*}\\ Q_{T^*}
\end{bmatrix}=\begin{bmatrix}
\cO_{D_{T^*},T^*}\\ Q_{T^*}
\end{bmatrix}B^*.
$$Equivalently,
\begin{align}\label{uniquness1}
&M_{\tG_1^*+z\tG_2}^* \cO_{D_{T^*},T^*}=\cO_{D_{T^*},T^*}A^*,\; M_{\tG_1^*+z\tG_2}^* \cO_{D_{T^*},T^*}=\cO_{D_{T^*},T^*}A^*\\
&\mbox{and}\quad(\tA_{22}^*,\tB_{22}^*,W_D^*)Q_{T^*}=Q_{T^*}(A^*,B^*,T^*).\label{uniquness2}
\end{align}
We first show that $(\tA_{22},\tB_{22})=(R_D,S_D)$. Note that since $\tA_{22}$ commutes with $W_D$ and $W_D$ is a unitary, $\tA_{22}$ commutes with $W_D^*$ 
as well, and so we use \eqref{uniquness2} to compute
\begin{align*}
\tA_{22}^*(W_D^nQ_{T^*}h)=W_D^n\tA_{22}^*Q_{T^*}h&=W_D^nQ_{T^*}A^*h\\
&=W_D^nR_D^*Q_{T^*}h=R_D^*(W_D^nQ_{T^*}h).
\end{align*}Since $\{W_D^nQ_{T^*}h:n\geq0\mbox{ and }h\in\cH\}$ is dense in $\cQ_{T^*}$, we have $\tA_{22}=R_D$. Similarly, $\tB_{22}=S_D$. Next we show that 
$(\tG_1,\tG_2)$ are the fundamental operators of $(A^*,B^*,T^*)$. To this end, we can use \eqref{uniquness1} and the power series expansion of 
$\cO_{D^*,T^*}(z)=\sum_{n\geq0}D_{T^*}T^{*n}h$ to arrive at the equations
$$
\tG_1D_{T^*}+\tG_2^*D_{T^*}T^*=D_{T^*}A^*\quad\text{and}\quad
\tG_2D_{T^*}+\tG_1^*D_{T^*}T^*=D_{T^*}B^*.
$$By part (ii) of Theorem \ref{T:FundTetra} applied to the tetrablock contraction $(A^*,B^*,T^*)$, $(\tG_1,\tG_2)$ must be equal to the Fundamental Operator
pair for $(A^*, B^*,T^*)$.
\end{proof}

\subsection{A Sz.-Nagy--Foias type functional model}
Sz.-Nagy and Foias gave a function-space realization of $\cQ_{T^*}$ and thereby produced a concrete functional model for a contraction $T$. 
In their analysis a crucial role is played by what they called the {\bf characteristic function} associated with $T$:
\begin{align}\label{CharcFunc}
    \Theta_T(z): = -T+z\cO_{D_{T^*}, T^*}T|_{\cD_T}:\cD_T\mapsto \cD_{T^*}.
\end{align}
The name suggests the fact  the characteristic function $\Theta_T$ enables one to write down an explicit functional model on which there is a 
compressed multiplication operator $\bfT$ which recovers the original operator $T$ up to unitary equivalence in case $T$ is a 
c.n.u.\ contraction (see Chapter VI of  \cite{Nagy-Foias}). Let $\Theta_T(\zeta)$ be the radial limit of the characteristic function defined almost everywhere on $\bT$. 
Consider 
\begin{equation}\label{DefectCharc}
\Delta_{T}(\zeta) := (I - \Theta_T(\zeta)^* \Theta_T(\zeta))^{1/2}.
\end{equation}
Sz.-Nagy and Foias showed in \cite{Nagy-Foias} that
$$
V_{\rm NF}:=\begin{bmatrix}
M_z&0\\0&M_{\zeta}|_{\overline{\Delta_T L^2(\cD_T)}} 
\end{bmatrix}:\begin{bmatrix}
H^2(\cD_{T^*})\\ \overline{\Delta_T L^2(\cD_T)}
\end{bmatrix}\to \begin{bmatrix}
H^2(\cD_{T^*})\\ \overline{\Delta_T L^2(\cD_T)}
\end{bmatrix}
$$
is a minimal isometric lift of $T$ via some isometric embedding 
$$
\Pi_{\rm NF}:\cH\to \begin{bmatrix}
H^2(\cD_{T^*})\\ \overline{\Delta_T L^2(\cD_T)}\end{bmatrix}=:\cK_{\rm NF}
$$ such that
\begin{align}\label{RanPinf}
\cH_{\rm NF}:=\operatorname{Ran}\Pi_{\rm NF}=\begin{bmatrix} H^2(\cD_{T^*}) \\ \overline{ \Delta_{T}L^2(\cD_T)} \end{bmatrix}
  \ominus \begin{bmatrix} \Theta_T \\ \Delta_{T} \end{bmatrix} \cdot H^2(\cD_T).
\end{align}
Any two minimal isometric lifts of a given contraction $T$ are unitarily equivalent; see Chapter I of \cite{Nagy-Foias}. In \cite{BS-Memoir} an explicit unitary $u_{\text{min}}:\cQ_{T^*}\to\overline{\Delta_T L^2(\cD_T)}$ is found that intertwines $W_D$ and $M_{\zeta}|_{\overline{\Delta_T L^2(\cD_T)}}$ and
\begin{equation}\label{Pinf}
\Pi_{\rm NF}=\begin{bmatrix} I_{H^2(\cD_{T^*})} & 0 \\ 0 &  u_{\text{min}} \end{bmatrix} \Pi_D.
\end{equation}

It is possible to give a concrete Sz.-Nagy--Foias type functional model using the transition map $u_{\rm{min}}:\cQ_{T^*}\to \overline{\Delta_TL^2(\cD_T)}$ as appeared (see \eqref{Pinf}) in the case of a single contractive operator above. We must replace the canonical tetrablock unitary $(R_D,S_D,W_D)$ by its avatar on the function space $\overline{\Delta_TL^2(\cD_T)}$:
\begin{align}\label{CanonTetraUniNF}
(R_{\rm{NF}}, S_{\rm{NF}},W_{\rm{NF}})=u_{\rm{min}}^*(R_D,S_D,W_D)u_{\rm{min}}.
\end{align}Then the following functional model is a straightforward consequence of Theorem \ref{T:Dmodel} and \eqref{Pinf}.

\begin{thm}\label{T:NFmodel}
Let $(A,B,T)$ be a tetrablock contraction on a Hilbert space $\mathcal{H}$ such that $T$ is c.n.u., and $(G_1,G_2)$ be the fundamental operators of 
$(A^*,B^*,T^*)$. Then $(A,B,T)$ is jointly unitarily equivalent to
\begin{align}\label{FunctModel}
    P_{\cH_{\rm{NF}}}\left(\begin{bmatrix}
M_{G_1^*+zG_2}&0\\0&R_{\rm{NF}}
\end{bmatrix},   
 \begin{bmatrix}  M_{G_2^*+zG_1}&0\\0&S_{\rm{NF}}  \end{bmatrix},   
\begin{bmatrix} M_z&0\\0& W_{\rm{NF}} \end{bmatrix}\right)\bigg|_{\cH_{\rm{NF}}},
\end{align}
where $\cH_{\rm{NF}}$ is the functional model space given by (see \eqref{RanPinf}) 
$$
\cH_{\rm{NF}}=\operatorname{Ran}\Pi_{\rm{NF}}=\begin{bmatrix} H^2(\cD_{T^*}) \\ \overline{ \Delta_{T}L^2(\cD_T)} \end{bmatrix}
  \ominus \begin{bmatrix} \Theta_T \\ \Delta_{T} \end{bmatrix} \cdot H^2(\cD_T).
  $$
\end{thm}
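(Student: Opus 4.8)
The plan is to transport the Douglas-type functional model of Theorem \ref{T:Dmodel} into Sz.-Nagy--Foias coordinates by conjugating with the transition unitary $u_{\min}$ recorded in \eqref{Pinf}. First I would recall from Theorem \ref{T:Dmodel}(3) that, since $(A,B,T)$ is in particular a tetrablock contraction, it is jointly unitarily equivalent via the isometric embedding $\Pi_D$ to the compression to $\cH_D=\operatorname{Ran}\Pi_D\subset\sbm{H^2(\cD_{T^*})\\ \cQ_{T^*}}$ of the block-diagonal triple
\[
\left(\begin{bmatrix} M_{G_1^*+zG_2}&0\\0&R_D \end{bmatrix},\begin{bmatrix} M_{G_2^*+zG_1}&0\\0&S_D\end{bmatrix}, \begin{bmatrix} M_z&0\\0& W_D \end{bmatrix}\right),
\]
where $(R_D,S_D,W_D)$ is the canonical tetrablock unitary of Definition \ref{D:canonical}. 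This is exactly \eqref{SemiModel}.

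Next I would introduce the unitary $U:=\operatorname{diag}(I_{H^2(\cD_{T^*})},u_{\min})\colon \sbm{H^2(\cD_{T^*})\\ \cQ_{T^*}}\to\cK_{\rm NF}$. By \eqref{Pinf} one has $\Pi_{\rm NF}=U\Pi_D$, and hence, $U$ being unitary, $U\cH_D=U(\operatorname{Ran}\Pi_D)=\operatorname{Ran}\Pi_{\rm NF}=\cH_{\rm NF}$. I would then verify that $U$ conjugates the Douglas model triple into the Sz.-Nagy--Foias model triple: the top-left corners $M_{G_1^*+zG_2}$, $M_{G_2^*+zG_1}$, $M_z$ are untouched since $U$ acts as the identity on the $H^2(\cD_{T^*})$ summand, while on the bottom-right corners the identity $u_{\min}^*(R_D,S_D,W_D)u_{\min}=(R_{\rm NF},S_{\rm NF},W_{\rm NF})$ holds by the very definition \eqref{CanonTetraUniNF}. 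Thus $U$ intertwines the block-diagonal triple in \eqref{SemiModel} with the one in \eqref{FunctModel}.

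Finally, since a unitary sending $\cH_D$ onto $\cH_{\rm NF}$ automatically conjugates $P_{\cH_D}X|_{\cH_D}$ into $P_{\cH_{\rm NF}}(UXU^*)|_{\cH_{\rm NF}}$ for every operator $X$ on $\sbm{H^2(\cD_{T^*})\\ \cQ_{T^*}}$, composing the unitary equivalence of Theorem \ref{T:Dmodel}(3) (implemented by $\Pi_D$) with conjugation by $U$ gives the unitary equivalence implemented by $\Pi_{\rm NF}$ between $(A,B,T)$ and \eqref{FunctModel}. The c.n.u.\ hypothesis on $T$ enters only through the classical Sz.-Nagy--Foias theory: it is what guarantees that the minimal isometric lift space $\cQ_{T^*}$ realizes concretely as $\overline{\Delta_T L^2(\cD_T)}$ and that $\cH_{\rm NF}$ has the explicit form \eqref{RanPinf}; it plays no further role. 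The only point requiring any care — and the closest thing to an obstacle here — is the bookkeeping check that $U$ genuinely implements a unitary equivalence of the two full block-matrix model triples and carries $\cH_D$ exactly onto $\cH_{\rm NF}$; once \eqref{Pinf} and \eqref{CanonTetraUniNF} are in hand this is immediate, which is why the result is a straightforward consequence of Theorem \ref{T:Dmodel}.
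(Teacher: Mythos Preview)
Your proposal is correct and follows exactly the route the paper takes: the paper simply states that Theorem \ref{T:NFmodel} is ``a straightforward consequence of Theorem \ref{T:Dmodel} and \eqref{Pinf},'' and what you have written is precisely the unpacking of that sentence---transport the Douglas model via $U=\operatorname{diag}(I_{H^2(\cD_{T^*})},u_{\min})$ using \eqref{Pinf} and the definition \eqref{CanonTetraUniNF}. One cosmetic point: with $u_{\min}\colon \cQ_{T^*}\to\overline{\Delta_T L^2(\cD_T)}$ as in \eqref{Pinf}, conjugation by your $U$ gives $u_{\min}R_D u_{\min}^*$ on the bottom-right corner, so the identity you want to invoke is $(R_{\rm NF},S_{\rm NF},W_{\rm NF})=u_{\min}(R_D,S_D,W_D)u_{\min}^*$; the paper's display \eqref{CanonTetraUniNF} has the stars reversed, but this is a typo there (cf.\ the direction of $u_{\min}$ stated in Definition \ref{D:charEdata}) and does not affect your argument.
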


Note that in the special case when $T^{*n}\to 0$ strongly as $n\to\infty$, the space $\cQ_{T^*}=0$ and hence also $\overline{ \Delta_{T}L^2(\cD_T)}  = 0$,
i.e., $\Theta_T$ is inner.  Therefore in this special case the models above simply boil down to the following which was obtained in \cite{SauNYJM}. 

\begin{thm}{\rm (See \cite[Theorem 4.2]{SauNYJM})}\label{T:fm}
 Let $(A,B,T)$ be a pure tetrablock contraction on a Hilbert space $\mathcal{H}$ and $(G_1,G_2)$ be the fundamental operators of $(A^*,B^*,T^*)$. Then $(A,B,T)$ is jointly unitarily equivalent to
$$P_{\operatorname{Ran}\cO_{D_{T^*}, T^*}}(M_{G_1^*+zG_2}, M_{G_2^*+zG_1}, M_z))|_{\operatorname{Ran}\cO_{D_{T^*}, T^*}}.$$
\end{thm}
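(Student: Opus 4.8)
The plan is to derive this as the pure-case specialization of the Douglas-type model in Theorem \ref{T:Dmodel}. First I would invoke the standing hypothesis that $T$ is a \emph{pure} contraction, i.e. $T^{*n}\to 0$ in the strong operator topology. From the defining formula \eqref{Q}, $Q_{T^*}^2=\operatorname{SOT-}\lim T^nT^{*n}=0$, so $Q_{T^*}=0$ and hence the auxiliary space $\overline{\operatorname{Ran}}\,Q_{T^*}=0$. Consequently the isometry $X_{T^*}^*$ of \eqref{theX} acts on the zero space, its minimal unitary extension space $\cQ_{T^*}$ is $\{0\}$, and the canonical tetrablock unitary $(R_D,S_D,W_D)$ from Definition \ref{D:canonical} is the (trivial) triple of operators on $\{0\}$. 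I would point out that in this situation the Douglas embedding $\Pi_D=\sbm{\cO_{D_{T^*},T^*}\\ Q_{T^*}}$ of \eqref{Pi-D} collapses to $\Pi_D=\cO_{D_{T^*},T^*}\colon\cH\to H^2(\cD_{T^*})$, and the isometry computation already carried out after \eqref{Pi-D} shows $\|\cO_{D_{T^*},T^*}h\|^2=\|h\|^2$ directly (the $\lim_n\|T^{*n}h\|^2$ term vanishes), so $\cO_{D_{T^*},T^*}$ is an isometry onto its range.

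Next I would substitute these facts into the conclusion \eqref{SemiModel}--\eqref{HD} of Theorem \ref{T:Dmodel}(3). With $\cQ_{T^*}=0$ all the block-matrix operators in \eqref{SemiModel} reduce to their $(1,1)$-entries $M_{G_1^*+zG_2}$, $M_{G_2^*+zG_1}$, $M_z$ on $H^2(\cD_{T^*})$, and the model space \eqref{HD} becomes $\cH_D=\operatorname{Ran}\Pi_D=\operatorname{Ran}\cO_{D_{T^*},T^*}\subset H^2(\cD_{T^*})$. Thus the unitary equivalence asserted by Theorem \ref{T:Dmodel}(3) reads exactly
$$
(A,B,T)\;\cong\;P_{\operatorname{Ran}\cO_{D_{T^*},T^*}}\bigl(M_{G_1^*+zG_2},\,M_{G_2^*+zG_1},\,M_z\bigr)\big|_{\operatorname{Ran}\cO_{D_{T^*},T^*}},
$$
which is the claimed statement. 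I would also note in passing that the intertwining identities \eqref{claim1'}--\eqref{claim2'} that powered the proof of Theorem \ref{T:Dmodel} specialize here to the single identity $M_{G_1^*+zG_2}^*\,\cO_{D_{T^*},T^*}=\cO_{D_{T^*},T^*}A^*$ (and its companion for $B$), which is precisely equation \eqref{need} obtained from part (ii) of Theorem \ref{T:FundTetra} applied to $(A^*,B^*,T^*)$; so the verification is self-contained even without routing through the full block-matrix apparatus.

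Since this is a genuine corollary, there is essentially no hard step: the content has already been done in Theorems \ref{T:Dmodel} and \ref{T:FundTetra}. The only point requiring a small amount of care is the bookkeeping showing that purity of $T$ forces $\cQ_{T^*}=0$ and hence collapses all second components --- in particular that the minimal unitary extension of the isometry $X_{T^*}^*$ on the zero space is again the zero space, so that no spurious unitary summand survives. Beyond that, the proof is a matter of unwinding definitions and citing \cite[Theorem 4.2]{SauNYJM} for the match with the previously known pure-case model.
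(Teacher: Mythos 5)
Your proposal is correct and follows exactly the paper's route: the paper derives Theorem \ref{T:fm} by the same observation that purity of $T$ forces $\cQ_{T^*}=0$ (and $\overline{\Delta_T L^2(\cD_T)}=0$), so the block models of Theorems \ref{T:Dmodel} and \ref{T:NFmodel} collapse to their first components. Your extra remarks on the intertwining identities are a harmless elaboration of the same argument.
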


\section{Tetrablock data sets: characteristic and special}   \label{S:DataSets}
In this section we provide some preliminary results towards a Sz.-Nagy-Foias-type model theory for tetrablock-contraction operator tuples $(A,B,T)$. Note that if $T$ is unitary, then the characteristic function $\Theta_T$, as in \eqref{CharcFunc}, is trivial (i.e., equal to the zero operator between the zero spaces).  As the most general contraction operator $T$
is the direct sum of a unitary $T_u$ with a completely nonunitary (c.n.u.) part $T_{\rm cnu}$ and the model theory for unitary operators is easily handled by spectral theory,
it is natural for model theory purposes to restrict to the case where $T$ is c.n.u.
One then associates a functional model spaces 
$$
 \bcK_T =  \begin{bmatrix}  H^2(\cD_{T^*}   \\  \overline{\Delta_T L^2(\cD_T) } \end{bmatrix} 
\quad  \bcH_T  =\bcK_T \ominus \begin{bmatrix}  \Theta_T  \\  \Delta_T  \end{bmatrix} H^2(\cD_T)  \subset \bcK_T 
$$
together with functional-model operators $\bfT_T$   and $\bfV_T$  by
$$
\bfT_T = P_{\bcH_T} \begin{bmatrix} M_z & 0 \\ 0 & M_\zeta \end{bmatrix} \bigg|_{\bcH_T} \text{ on }  \bcH_T, \quad
\bfV_T =  \begin{bmatrix} M_z & 0 \\ 0 & M_\zeta \end{bmatrix} \text{  on } \bcK_\bT.
$$
Then it is immediate that:
\begin{enumerate}
\item [(NF1)]  $\bfV_T$ is an isometry on $\bcK_T$.
\item[(NF2)] $\bcH_\bfT^\perp = \sbm{ \Theta_T \\ \Delta_T} H^2(\cD_T)$ is invariant for $\bfV_T$, and hence $\bfV_T$ is an isometric lift for $\bfT_T$.
\end{enumerate}
Less immediately obvious are other features of the model:
\begin{enumerate}
\item[(NF3)]  (See \cite[Theorem VI.2.3]{Nagy-Foias}) If $T$ is c.n.u., then  $\bfT_T$ is unitarily equivalent to $T$.

\item[(NF4)] (See \cite[Theorem VI.3.4]{Nagy-Foias}.)
If $T$ on $\cH$ and $T'$ on $\cH'$ are two c.n.u.~contraction operators, then $T$ is unitarily equivalent to $T'$
if and only if $\Theta_T$ coincides with $\Theta_{T'}$ in the following sense: {\em  there exist unitary change-of-coordinate maps
$\phi \colon \cD_T \to  \cD_{T'}$ and $\phi_* \colon \cD_{T^*} \to \cD_{T^{\prime *}}$ so that
$\phi_* \Theta_T(\lam)  = \Theta_{T'}(\lam) \phi$ for all $\lam \in {\mathbb D}$.} Often in the literature this property is described simply as:
{\em the characteristic function $\Theta_T$ is a complete unitary invariant for c.n.u.~contraction operator $T$.}
\end{enumerate}

The Sz.-Nagy-Foias theory goes still further by identifying the {\em coincidence-envelop} of the characteristic functions  \eqref{ThetaT},
i.e., the set of all contractive operator functions $(\cD, \cD_*, \Theta)$ coinciding with the characteristic function $\Theta_T$ for some c.n.u.~contraction operator
$T$,
as simply any contractive operator function $(\cD, \cD_*, \Theta)$ which is \textbf{pure} in the sense that
$$
  \| \Theta(0) u \| < \| u \| \text{ for any } u \in \cD \text{ such that } u \ne 0.
$$
Then we can start with any pure COF $(\cD, \cD_*, \Theta)$, form $\bcK(\Theta)$ and $\bcH(\Theta)$ according to
$$
\bcK(\Theta) = \begin{bmatrix} H^2(\cD_*) \\ \overline{ D_\Theta \cdot L^2(\cD)} \end{bmatrix}, \quad
\bcH(\Theta) = \bcK(\Theta) \ominus \begin{bmatrix}  \Theta \\ D_\Theta \end{bmatrix} H^2(\cD) \subset  \bcK(\Theta)
$$
where $D_\Theta$ is the $\Theta$-defect operator function $D_\Theta(\zeta) = ( I_\cD - \Theta(\zeta)^* \Theta(\zeta))^{\frac{1}{2}}$.
Then we can form the model operators
$$
\bfV(\Theta) = \begin{bmatrix} M_z & 0 \\ 0 & M_\zeta \end{bmatrix} \text{ on } \bcK(\Theta), \quad
\bfT(\Theta) = P_{\bcH(\Theta)} \bfV(\Theta) \bigg|_{\bcH(\Theta)}.
$$
Then we have the additional results:
\begin{enumerate}
\item[(NF5)]  (See \cite[Theorem VI.3.1]{Nagy-Foias}.) Given any pure COF $\Theta$,  $\bfT(\Theta)$ is a c.n.u.~contraction operator on $\bcH(\Theta)$ with characteristic operator function
$\Theta_{\bfT(\Theta)}$ coinciding with $\Theta$.
\end{enumerate}
In this way the loop is closed:  the study of c.n.u.~contraction operators is the same as the study of pure COFs.

To explain generalizations to the setting of tetrablock contractions, we first introduce some useful terminology.
For this discussion, just as in the classical Sz.-Nagy-Foias settings, it makes sense to restrict to c.n.u.~tetrablock contractions
(see Theorem \ref{T:tetra-can-decom} and Remark \ref{R:tetra-can-decom}).

\begin{definition}  \label{D:Edata} Let us say that any collection of objects 
$$
\Xi = (\Theta, (G_1, G_2), \psi \}
$$ 
consisting of
\begin{enumerate}
\item[(i)] a pure COF function $(\cD, \cD_*, \Theta)$,
\item[(ii)] a pair of operators $(G_1, G_2)$ on the coefficient space $\cD_*$, and
\item[(iii)] a measurable function $\psi$ on ${\mathbb T}$ such that, for a.e.~$\zeta \in {\mathbb T}$,
$\psi(\zeta)$ is a contractive normal operator on $\cD_\Theta(\zeta) = \overline{\operatorname{Ran} \, D_\Theta(\zeta)}$.
\end{enumerate}
is a \textbf{tetrablock data set}.
\end{definition}

\begin{remark} \label{R:Edata}
Canonically associated with any such $\psi$ as in item (iii) in Definition \ref{D:Edata} is the tetrablock unitary triple $(R,S,W)$ on the direct integral space 
$\oplus \int_{\mathbb T} \cD_{\Theta(\zeta)} \frac{ |{\tt d} \zeta|}{2 \pi}$ given by
$$ 
R = M_{\psi^* \cdot \zeta}, \quad S = M_\psi,  \quad W = M_\zeta
$$
and (as one sweeps over all possible such $\psi$), this is the general tetrablock unitary operator triple $(R,S,W)$ on the space 
$\oplus \int_{\mathbb T} \cD_{\Theta(\zeta)} \frac{ |{\tt d} \zeta|}{2 \pi}$  with the unitary operator $W$ equal to $W = M_\zeta$
(multiplication by the coordinate function) (see Example \ref{E:model} (2)).  Thus item (iii) in the definition of tetrablock data set can be equivalently rephrased as:

\begin{enumerate}
\item[(iii')]  a tetrablock unitary operator-triple $(R,S,W)$ on the direct-integral space $\int_{\mathbb T} \cD_{\Theta(\zeta)} \frac{|{\tt d} \zeta|}{2 \pi}$ such 
that the last unitary component $W$ is equal to multiplication by the coordinate function $W = M_\zeta$.
\end{enumerate}
However, for convenience of notation, we shall continue to use the notation $(R,S,W)$ for the third component of a tetrablock data set $\Xi = (\Theta, (G_1, G_2),
(R,S,W))$ with the convention (iii') also part of the definition.
\end{remark}

\begin{definition}  \label{D:charEdata}
Given a c.n.u.~tetrablock contraction $(A,B,T)$ we say that $\Xi_{A,B,T} = (\Theta, (G_1, G_2), \psi)$ is the
\textbf{characteristic tetrablock data set} for $(A,B,T)$ if
\begin{enumerate}
\item[(i)] $(\cD, \cD_*, \Theta)$ is equal to the Sz.-Nagy-Foias characteristic function $(\cD_T, \cD_{T^*}, \Theta_T)$ for the c.n.u.~contraction operator $T$,
\item[(ii)] $(G_1, G_2)$ is equal to the Fundamental Operator pair for the adjoint tetrablock contraction $(A^*, B^*, T^*)$, and
\item[(iii)] $(R,S,D)$ is given by
$$
((R,S,W))= (R_{\rm NF}, S_{\rm NF}, W_{\rm NF}):=  u_{\rm min}^* (R_D, S_D, W_D)  u_{\rm min}
$$ 
where $(R_D, S_D, W_D)$ is the tetrablock unitary on $\cQ_{T^*}$ determined by tetrablock contraction
$(A,B,T)$ according to Definition \ref{D:canonical}, and where $u_{\rm min} \colon \overline{\Delta_T L^2(\cD_T)} \to \cQ_{T^*}$
is the unitary identification map identifying the Sz.-Nagy-Foias lifting  residual space  $\overline{\Delta_T L^2(\cD_T)}$ with the
Douglas lifting residual space $\cQ_{T^*}$.
\end{enumerate}
\end{definition}

Then it is clear that the characteristic tetrablock data set $\Xi_{A,B,T}$ for a c.n.u.~tetrablock contraction $(A,B,T)$ is a tetrablock data set. 
The natural notion of equivalence for tetrablock-data sets is the following.

\begin{definition}  \label{D:coincide}
Let $(\mathcal{D},\mathcal{D}_*,\Theta)$, $(\mathcal{D'},\mathcal{D'_*},\Theta')$ be two purely contractive
analytic functions. Let $G_1,G_2\in\cB(\cD_*)$, $G_1',G_2'\in\cB(\mathcal{D'_*})$, and $(R,S,W)$ on $\overline{\Delta_\Theta L^2(\mathcal{D})}$ and $(R',S',W')$ on 
$\overline{\Delta_{\Theta'} L^2(\mathcal{D'})}$ be two tetrablock unitaries (with $W$ and $W'$ equal to  $M_\zeta$ on their respective spaces).
 We say that the two triples $(\Theta, (G_1,G_2),(R,S,W))$
and $(\Theta', (G_1',G_2'),(R',S',W'))$ {\bf coincide} if:
\begin{enumerate}
  \item[(i)] $(\mathcal{D},\mathcal{D}_*,\Theta)$ and $(\mathcal{D'},\mathcal{D'_*},\Theta')$ coincide,
\item[(ii)] the unitary operators $\phi$, $\phi_*$ involved in the coincidence of $(\mathcal{D},\mathcal{D}_*,\Theta)$ and $(\mathcal{D'},\mathcal{D'_*},\Theta')$ satisfy 
the additional intertwining conditions:
\begin{align*}
\phi_*(G_1,G_2)=(G_1',G_2')\phi_*\quad\mbox{and}\quad \omega_{\phi}(R,S,W)=(R',S',W')\omega_{\phi},
\end{align*}
where $\omega_{\phi}:\overline{\Delta_{\Theta} L^2(\mathcal{D})}\to\overline{\Delta_{\Theta'} L^2(\mathcal{D'})}$
is the unitary map induced by $\phi$ according to the formula
\begin{equation}      \label{omega-u}
\omega_{\phi}:=(I_{L^2}\otimes \phi)|_{\overline{\Delta_{\Theta} L^2(\mathcal{D})}}.
\end{equation}
\end{enumerate}
\end{definition}

Given a characteristic tetrablock data set $\Xi_{(A,B,T)} = (\Theta, (G_1, G_2), (R,S,W))$ for a tetrablock contraction $(A,B,T)$ we can write down a functional model:
$$
\bcK(\Xi) = \begin{bmatrix} H^2(\cD_*) \\  \overline{D_\Theta L^2(\cD)} \end{bmatrix}, \quad
\bcH(\Xi) = \bcK \ominus \begin{bmatrix} \Theta \\ D_\Theta \end{bmatrix} H^2(\cD) 
$$
with functional-model operators
\begin{align*}
& \bfV(\Xi) = \bigg( \begin{bmatrix} M_{G_1^* + z G_2} & 0 \\ 0 & R \end{bmatrix}, \begin{bmatrix} M_{G_2^* + z G_1} & 0 \\ 0 & S \end{bmatrix},
\begin{bmatrix} M_z & 0 \\ 0 & M_\zeta \end{bmatrix} \bigg) \text{ on } \bcK(\Xi),   \\
& \bfT(\Xi) = P_{\bcH(\Xi)} \bfV(\Xi) |_{\bcH(\Xi)}.
\end{align*}
The tetrablock analogue of items (NF1)-(NF2) in our discussion of the Sz.-Nagy-Foias model above  fails without the additional assumptions: 
 namely, it is not the case that $\bfV$ is a tetrablock isometry as well as that $\bfV$ is a lift for $\bfT$
unless we also impose the condition \eqref{com=} on $(G_1, G_2)$.  Nevertheless, the analogue of (NF3) does hold:  {\em given that $\Xi$ is the
characteristic tetrablock triple for $(A,B,T)$, it is the case that $(A,B,T)$ is unitarily equivalent to $\bfT(\Xi)$}:  this is the content of the last part of
item (3) in Theorem \ref{T:Dmodel} (after a conversion to Sz.-Nagy-Foias rather than Douglas coordinates): see \eqref{SemiModel} and \eqref{HD}. 
The next theorem  amounts to the analogue of (NF4) in our list of features for the Sz.-Nagy-Foias model above.

\begin{thm}\label{Thm:CompUniInv}
Let $(A,B,T)$ and $(A',B',T')$ be two tetrablock contractions acting on $\cH$ and $\cH'$, respectively. Let 
$$
(\Theta_T, (G_1,G_2), (R_{\rm NF}, S_{\rm NF},W_{\rm NF}), \quad  (\Theta_{T'}, (G_1',G_2'), (R_{\rm NF}', S_{\rm NF}',W_{\rm NF}'))
$$ 
be the characteristic tetrablock data sets for  $(A,B,T)$ and $(A',B',T')$, respectively.

\smallskip

\noindent
{\rm (1)} If $(A,B,T)$ and $(A',B',T')$ are unitarily equivalent, then their characteristic tetrablock data sets coincide.

\smallskip

\noindent
{\rm (2)}
Conversely, if $T$ and $T'$ are c.n.u.\ contractions and the characteristic tetrablock data sets of $(A,B,T)$ and $(A',B',T')$ coincide, then $(A,B,T)$ and $(A',B',T')$ 
are unitarily equivalent.
\end{thm}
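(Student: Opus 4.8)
The plan is to read off the coincidence data directly from a unitary $\tau\colon\cH\to\cH'$ with $\tau(A,B,T)=(A',B',T')\tau$. Write $\Xi,\Xi'$ for the two characteristic tetrablock data sets and recall the associated model spaces $\bcK(\Xi),\bcH(\Xi)$ and model operators $\bfV(\Xi),\bfT(\Xi)$. Since $\tau$ intertwines $T$ with $T'$, it intertwines $T^*$ with $T'^*$, hence $D_T$ with $D_{T'}$ and $D_{T^*}$ with $D_{T'^*}$, so $\tau$ carries $\cD_T$ unitarily onto $\cD_{T'}$ and $\cD_{T^*}$ unitarily onto $\cD_{T'^*}$; set $\phi:=\tau|_{\cD_T}$ and $\phi_*:=\tau|_{\cD_{T^*}}$. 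Because every ingredient of the characteristic function \eqref{CharcFunc} ($T$, $T^*$, $D_T$, $D_{T^*}$, and hence $\cO_{D_{T^*},T^*}$) is intertwined by $\tau$, a one-line computation gives $\phi_*\Theta_T(\lambda)=\Theta_{T'}(\lambda)\phi$, i.e.\ $\Theta_T$ coincides with $\Theta_{T'}$ via $(\phi,\phi_*)$. Next, conjugating the determining equations \eqref{Determining*} for the fundamental operators of $(A^*,B^*,T^*)$ by $\phi_*$ shows that $(\phi_*G_1\phi_*^*,\phi_*G_2\phi_*^*)$ solves the corresponding equations for $(A'^*,B'^*,T'^*)$, so by the uniqueness in Theorem \ref{T:FundTetra}(ii) one gets $\phi_*(G_1,G_2)=(G_1',G_2')\phi_*$. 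For the third component, Theorem \ref{T:UniCanon} supplies a unitary $\omega_\tau\colon\cQ_{T^*}\to\cQ_{T'^*}$ intertwining the Douglas canonical tetrablock unitaries $(R_D,S_D,W_D)$ and $(R_D',S_D',W_D')$; transporting through \eqref{CanonTetraUniNF} and using that the Sz.-Nagy--Foias embedding $\Pi_{\rm NF}$ (equivalently the transition map $u_{\rm min}$ of \eqref{Pinf}) is defined by a formula natural in the contraction operator, one checks $u_{\rm min}'\,\omega_\tau=\omega_\phi\,u_{\rm min}$, where $\omega_\phi=(I_{L^2}\otimes\phi)|_{\overline{\Delta_T L^2(\cD_T)}}$; hence $\omega_\phi$ intertwines $(R_{\rm NF},S_{\rm NF},W_{\rm NF})$ with $(R_{\rm NF}',S_{\rm NF}',W_{\rm NF}')$. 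These three facts are exactly the conditions in Definition \ref{D:coincide}, so $\Xi$ and $\Xi'$ coincide.

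\textbf{Part (2).} Here the plan is to run the functional model of Theorem \ref{T:NFmodel}. Let $(\phi,\phi_*)$ implement the coincidence of $\Theta_T$ and $\Theta_{T'}$ together with the extra intertwinings $\phi_*(G_1,G_2)=(G_1',G_2')\phi_*$ and $\omega_\phi(R_{\rm NF},S_{\rm NF},W_{\rm NF})=(R_{\rm NF}',S_{\rm NF}',W_{\rm NF}')\omega_\phi$ with $\omega_\phi=(I_{L^2}\otimes\phi)|_{\overline{\Delta_T L^2(\cD_T)}}$. From $\phi_*\Theta_T=\Theta_{T'}\phi$ one squares to get $\phi\Delta_T^2\phi^*=\Delta_{T'}^2$, hence $\phi\Delta_T=\Delta_{T'}\phi$, so $I_{L^2}\otimes\phi$ carries $\overline{\Delta_T L^2(\cD_T)}$ unitarily onto $\overline{\Delta_{T'}L^2(\cD_{T'})}$ and $\omega_\phi$ is a genuine unitary. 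Now put
$$
U:=\begin{bmatrix} I_{H^2}\otimes\phi_* & 0 \\ 0 & \omega_\phi \end{bmatrix}\colon \bcK(\Xi)\to\bcK(\Xi').
$$
Using $\phi_*G_i=G_i'\phi_*$ (and adjoints), the fact that $\phi$ is constant, $W_{\rm NF}=W_{\rm NF}'=M_\zeta$, and the intertwining of the $(R,S,W)$-triples, one sees $U$ intertwines $\bfV(\Xi)$ with $\bfV(\Xi')$. Applying $U$ to a generator $\sbm{\Theta_T f\\ \Delta_T f}$ of $\bcH(\Xi)^\perp=\sbm{\Theta_T\\ \Delta_T}H^2(\cD_T)$ produces $\sbm{\Theta_{T'}(\phi f)\\ \Delta_{T'}(\phi f)}$, and $\phi f$ ranges over all of $H^2(\cD_{T'})$ as $f$ ranges over $H^2(\cD_T)$; hence $U$ maps $\bcH(\Xi)^\perp$ onto $\bcH(\Xi')^\perp$ and therefore restricts to a unitary $\bcH(\Xi)\to\bcH(\Xi')$. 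Consequently $UP_{\bcH(\Xi)}=P_{\bcH(\Xi')}U$, so $U|_{\bcH(\Xi)}$ intertwines $\bfT(\Xi)=P_{\bcH(\Xi)}\bfV(\Xi)|_{\bcH(\Xi)}$ with $\bfT(\Xi')$. Finally, since the characteristic data sets have exactly the components appearing in Theorem \ref{T:NFmodel}, and $T,T'$ are c.n.u., that theorem gives $(A,B,T)\cong\bfT(\Xi)$ and $(A',B',T')\cong\bfT(\Xi')$; chaining these equivalences with $U|_{\bcH(\Xi)}$ yields $(A,B,T)\cong(A',B',T')$.

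\textbf{Where the difficulty lies.} Part (2) is essentially bookkeeping: once the coincidence data $(\phi,\phi_*,\omega_\phi)$ is available, assembling $U$ and verifying that it respects both the subspace $\sbm{\Theta_T\\ \Delta_T}H^2(\cD_T)$ and the three model operators is routine. The delicate point is the third (tetrablock-unitary) component in Part (1): one must confirm that the unitary equivalence $\omega_\tau$ of Douglas canonical unitaries produced by Theorem \ref{T:UniCanon} corresponds, under the Douglas-to-Sz.-Nagy--Foias identification $u_{\rm min}$ of \eqref{Pinf}, precisely to the $\phi$-induced map $\omega_\phi=(I_{L^2}\otimes\phi)|$; this step uses the explicit, functorial description of $u_{\rm min}$ (equivalently of $\Pi_{\rm NF}$) from \cite{BS-Memoir}, and is the only place where naturality of that construction is really needed.
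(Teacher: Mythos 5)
Your proof is correct and follows essentially the same route as the paper's: restrict $\tau$ to the defect spaces to get $(\phi,\phi_*)$, use uniqueness of the Fundamental Operators and the identity $\omega_\phi\,u_{\rm min}=u_{\rm min}'\,\omega_\tau$ (via Theorem \ref{T:UniCanon} and the explicit form of $\Pi_{\rm NF}$) for the forward direction, and the block-diagonal unitary $\sbm{I_{H^2}\otimes\phi_* & 0\\ 0 & \omega_\phi}$ together with Theorem \ref{T:NFmodel} for the converse. Your Part (2) is in fact spelled out in more detail than the paper's, which merely asserts that this unitary identifies the model spaces and intertwines the model operators.
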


\begin{proof}
First suppose that $(A,B,T)$ and $(A',B',T')$ are unitarily equivalent via a unitary $\tau:\cH\to\cH'$. The fact that $\Theta_T$ and $\Theta_{T'}$ coincide is a part of the Sz.-Nagy--Foias theory \cite{Nagy-Foias}. Indeed, note that
\begin{align*}
\tau(I-T^*T)=(I-T'^*T')\tau \text{ and }\tau(I-TT^*)=(I-T'T'^*)\tau
\end{align*}
and therefore by the functional calculus for positive operators,
\begin{align}\label{IntwinDefects}
    \tau D_T=D_{T'}\tau\quad\mbox{and}\quad\tau D_{T^*}=D_{T'^*}\tau
\end{align} and thereby inducing two unitary operators
\begin{align}\label{u&u*}
\phi:=\tau|_{\cD_T}:\cD_T\to\cD_{T'} \text{ and }\phi_*:=\tau|_{\cD_{T^*}}:\cD_{T^*}\to\cD_{T'^*}.
\end{align}
Consequently $\phi_*\Theta_{T}=\Theta_{T'}\phi$. Next, since the fundamental operators are the unique operators that satisfy the equations for $(X_1,X_2)=(G_1,G_2)$:
\begin{align*}
    A^*-BT^*=D_{T^*}X_1D_{T^*} \quad\mbox{and}\quad B^*-AT^*=D_{T^*}X_2D_{T^*},
\end{align*} 
one can easily obtain using (\ref{IntwinDefects}) that
\begin{align}\label{IntwinFunds}
\phi_*(G_1,G_2)=(G_1',G_2') \phi_*.
\end{align} 
Finally the proof of the forward direction will be complete if we establish that 
\begin{align}\label{ForwardLast}
(R_{\rm NF},S_{\rm NF},W_{\rm NF})=\omega_\phi^*(R'_{\rm NF},S'_{\rm NF},W'_{\rm NF})\omega_\phi
\end{align} 
where $\omega_\phi=(I_{L^2}\otimes \phi)|_{\overline{\Delta_{T} L^2(\mathcal{D}_T)}}:
\overline{\Delta_{T} L^2(\mathcal{D}_T)}\to \overline{\Delta_{T'} L^2(\mathcal{D}_{T'})}$. 
For this we first note that
\begin{align*}
\begin{bmatrix}
I&0\\0& u_{\rm min}
\end{bmatrix}\begin{bmatrix}
\cO_{D_{T^*},T^*}\\ Q_{T^*}
\end{bmatrix}=\Pi_{\rm NF}&=\begin{bmatrix}
I_{H^2}\otimes \phi_*^*&0\\0& \omega_\phi^*
\end{bmatrix}\Pi_{\rm NF}'\tau \\
&=\begin{bmatrix}
I_{H^2}\otimes \phi_*^*&0\\0& \omega_\phi^*
\end{bmatrix}\begin{bmatrix}
I&0\\0& u_{\rm min}'
\end{bmatrix}\begin{bmatrix}
\cO_{D_{T'^*},T'^*}\\ Q_{T'^*}
\end{bmatrix}\tau,
\end{align*}
from which we read off that 
\begin{align}\label{u-minQ}
u_{\rm min}Q_{T^*}=\omega_\phi^* u_{\rm min}'Q_{T'^*}\tau.
\end{align}
Now since $\cQ_{T'^*}=\overline{\operatorname{span}}\{W_D'^nQ_{T'^*}\tau h:h\in\cH,n\geq 0\}$ and $u_{\rm min}$ has the intertwining property $u_{\rm min} W_D=M_\zeta u_{\rm min}$, we use \eqref{u-minQ} to compute
\begin{align*}
    \omega_{\phi}\cdot u_{\rm min}\cdot \omega_\tau^* (W_D'^nQ_{T'^*}\tau h)
    &=\omega_{\phi}\cdot u_{\rm min} (W_D^nQ_{T^*}h)\\
    &=\omega_{\phi} M_\zeta ^n u_{\rm min} Q_{T^*}h\\
    &=M_\zeta^n\omega_\phi \cdot u_{\rm min} Q_{T^*}h\\
    &=M_\zeta^n u_{\rm min}' Q_{T'^*}\tau h=u_{\rm min}' (W_D'^nQ_{T'^*}\tau h).
\end{align*}Consequently
\begin{align}\label{u-min}
\omega_\phi\cdot u_{\rm min}\cdot \omega_\tau^*= u_{\rm min}'.
\end{align}Using this identity and the intertwining properties of the unitaries involved, it is now easy to establish \eqref{ForwardLast}.

Conversely, suppose $T$ and $T'$ are c.n.u.\ contractions, $\phi:\cD_T\to\cD_{T'}$ and $\phi_*:\cD_{T^*}\to\cD_{T'^*}$ be the unitary operators involved 
in the coincidence of the characteristic tetrablock data sets 
$$
((G_1,G_2), (R_{\rm NF}, S_{\rm NF},W_{\rm NF}),\Theta_T), \quad 
((G_1',G_2'), (R_{\rm NF}', S_{\rm NF}',W_{\rm NF}'),\Theta_{T'}).
$$
 By Definition \ref{D:coincide}, it follows that the unitary
$$
U=\begin{bmatrix} I_{H^2}\otimes\phi_* & 0 \\ 0& \omega_\phi \end{bmatrix} \colon
\begin{bmatrix} H^2(\cD_{T^*}) \\ \overline{ \Delta_{T}L^2(\cD_T)} \end{bmatrix} \to  
\begin{bmatrix} H^2(\cD_{T'^*}) \\ \overline{ \Delta_{T'}L^2(\cD_{T'})} \end{bmatrix}
$$
identifies the model spaces $\cH_{\rm NF}$ and $\cH_{\rm NF}'$ and intertwines the model operators as in \eqref{FunctModel} associated with 
$(A,B,T)$ and $(A',B',T')$, respectively. This completes the proof of Theorem \ref{Thm:CompUniInv}.
\end{proof}

The question remains
as to what additional coupling conditions must be imposed on a tetrablock data set $\Xi$ to assure that $\Xi$ coincides with the characteristic tetrablock data set
for a c.n.u.~ tetrablock contraction $(A,B,T)$.   In the Sz.-Nagy-Foias theory, the data set (or invariant) consists of a single COF, and the only
additional requirement is that it must be pure.

From the results of Section \ref{SS:FundOps} we see that any characteristic tetrablock data
$$
\Xi_{A,B,T} =  (\Theta, (G_1, G_2), (R,S,W))
$$
 for  a c.n.u.~tetrablock contraction operator-triple $(A,B,T)$ satisfies the additional conditions
 (expressed directly in terms of the components of  $\Xi_{A,B,T}$ rather than in terms of $(A,B,T)$):
\begin{enumerate}
\item[(i)] $\Theta$ is a pure COF (see \cite[Theorem VI.3.1]{Nagy-Foias}),

\item[(ii)]  the numerical radius conditions
$$
\nu(G_1^* + z G_2) \le 1, \quad \nu(G_2^* + z G_1) \le 1 \text{ for all } z \in \overline{\mathbb D}
$$
hold,  implying that also the spectral radius conditions
$$
r(M_{G_1^* + z G_2}) \le 1, \quad  r(M_{G_2^* + z G_1}) \le 1
$$
(see Theorems \ref{T:pseudo-strict} (1)).
\item[(iii)] It is almost the case that the spectral radius and norm agree for $M_{G_1^* + z G_2}$ and $M_{G_2^* + z G_1}$ in the following sense 
(see Theorem \ref{T:pc-vs-strict-Eisom} (1)):
$$
r(M_{G_1^* + z G_2} \cdot M_{G_2^* + z G_1} ) = \max\{ \| M_{G_1^* + z G_2} \|^2,  \, \|M_{G_2^* + z G_1} \|^2 \}.
$$
\end{enumerate}
However we do not expect that just imposing these conditions is sufficient to guarantee that such a tetrablock data set $\Xi$  will coincide with
the characteristic tetrablock data set for some tetrablock contraction, so we do not expect to have an analogue of (NF5) at this level of generality.

Let us now specialize our class of tetrablock contractions to what we call \textbf{special tetrablock contractions}, i.e., any  tetrablock contraction
$(A,B,T)$ with the special property that the Fundamental Operator pair $(G_1, G_2)$ for $(A^*, B^*, T^*)$ satisfies the additional pair of operator equations
\eqref{com=}.  By Theorem \ref{T:Dmodel}, this is equivalent to $(A,B,T)$ having a minimal tetrablock isometric lift $(V_1, V_2, V_3)$ acting on a
minimal Sz.-Nagy isometric-lift space for $T$ with $V_3$ equal to a minimal isometric lift for the single contraction operator $T$.

This suggests that we define a \textbf{special tetrablock data set}  as follows. For convenience in later discussion we shall now write the last component
$(R,S,W)$ simply as $\psi$ for a measurable  contractive-normal operator-valued function $\zeta \mapsto \psi(\zeta) \in \cB(\cD_{\Theta(\zeta)})$
according to the convention explained in Remark \ref{R:Edata}.

\begin{definition}\label{D:Adm}
We say that the tetrablock data set
\begin{equation}   \label{Edata}
 \Xi = ((\cD, \cD_*, \Theta), (G_1, G_2), \psi)
 \end{equation}
 is a \textbf{special tetrablock data set}  if the following conditions hold:
 \begin{enumerate}
\item[(i)] The operators $G_1, G_2 \in \cB(\cD_*)$ satisfy the commutativity conditions \eqref{com=}, i.e.,
$$
[G_1,G_2]=0,\quad [G_1^*,G_1]=[G_2^*,G_2]
$$
as well as the pencil-contractivity condition
$$
\|G_1^*+zG_2\|\leq 1 \text{ for all } z\in\overline{\bD} 
$$
and then also 
$$
 \|G_2^*+zG_1 \|\leq 1 \text{ for all } z \in \overline{{\mathbb D}}.
$$

\item[(ii)] the space $\left\{\sbm{\Theta\\ D_\Theta}f:f\in H^2(\cD)\right\}$ is jointly invariant under the operator triple
\begin{align}\label{AdmLifts}
\bigg( \begin{bmatrix} M_{G_1^* + zG_2} & 0 \\ 0 & M_{\psi(\zeta)^*\cdot \zeta} \end{bmatrix}, \begin{bmatrix} M_{G_2^* + zG_1} & 0 \\ 0 & M_{\psi(\zeta)}
 \end{bmatrix},
\begin{bmatrix} M_z & 0 \\ 0 & M_\zeta \end{bmatrix} \bigg).
\end{align}
\end{enumerate}
\end{definition}

\smallskip

Given a special tetrablock data set  $(\Theta, (G_1,G_2),\psi)$, we say that the space
\begin{align}\label{AdmSpace}
{\boldsymbol \cH} = \begin{bmatrix} H^2 (\cD_*) \\ \overline{ D_\Theta L^2(\cD)} \end{bmatrix} \ominus
\begin{bmatrix} \Theta \\ D_\Theta \end{bmatrix} H^2(\cD)
\end{align}
is the {\bf functional model space} and the (commutative) operator triple  $(\bA, \bB, \bfT)$ given by
\begin{align}\label{AdmOps}
P_{\bcH}\bigg( \begin{bmatrix} M_{G_1^* + zG_2} & 0 \\ 0 & M_{\psi(\zeta)^* \cdot \zeta} \end{bmatrix}, \begin{bmatrix} M_{G_2^* + zG_1} & 0 \\ 0 & M_{\psi(\zeta)} \end{bmatrix},
\begin{bmatrix} M_z & 0 \\ 0 & M_\zeta \end{bmatrix} \bigg) \bigg|_\bcH
\end{align}
the {\bf functional-model operator triple} associated with the data set.  The following theorem is our one analogue of item (NF5) in our list of features
of the Sz.-Nagy-Foias model.

\begin{thm}
If $\Xi = (\Theta, (G_1,G_2),\psi)$ is a special tetrablock data set, then the associated model operator triple $(\bf A,\bf B, \bf T)$ as in \eqref{AdmOps}, 
is a tetrablock contraction that lifts to the tetrablock isometry as in \eqref{AdmLifts}. Moreover, the tetrablock data set $\Xi$ coincides with the characteristic 
triple of $(\bf A, \bf B, \bf T)$.
\end{thm}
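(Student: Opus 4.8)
The plan is to reverse-engineer the Douglas-model construction of Section~\ref{SS:fmTet}. First I would verify that the operator triple in \eqref{AdmLifts} is a strict tetrablock isometry: its top block $(M_{G_1^*+zG_2}, M_{G_2^*+zG_1}, M_z)$ on $H^2(\cD_*)$ has the coupled-pencil form \eqref{pcform}, and by hypothesis (i) the coefficients $(G_1,G_2)$ satisfy the commutativity conditions \eqref{com=} and the pencil-contractivity bound, so Theorem~\ref{T:pseudo-strict}(2) (equivalently part (2) of Example~\ref{E:model}) makes it a strict tetrablock isometry; the bottom block $(M_{\psi^*\cdot\zeta}, M_\psi, M_\zeta)$ is a strict tetrablock unitary by Example~\ref{E:model}(2) since $\psi(\zeta)$ is contractive normal a.e. Hence the direct sum \eqref{AdmLifts} is a strict tetrablock isometry. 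Now hypothesis (ii) says the subspace $\sbm{\Theta\\ D_\Theta}H^2(\cD) = \bcH^\perp$ is jointly invariant for \eqref{AdmLifts}, so compressing to $\bcH$ gives a commutative triple $(\bA,\bB,\bfT)$ with $r(\bA,\bB,\bfT)^* = $ the restriction of the adjoint triple; since compressions of a triple lying in the closure of $\operatorname{Rat}(\bE)$-functional-calculus to a co-invariant subspace again satisfy the spectral-set inequality, $(\bA,\bB,\bfT)$ is a tetrablock contraction, and \eqref{AdmLifts} is by construction a (strict) tetrablock isometric lift of it. This establishes the first two sentences.

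For the coincidence claim I would compute the three components of the characteristic tetrablock data set $\Xi_{\bA,\bB,\bfT}$ and match them with $\Xi$. The key observation is that $\bfT = P_\bcH \sbm{M_z&0\\0&M_\zeta}|_\bcH$ is exactly the Sz.-Nagy--Foias functional model operator $\bfT(\Theta)$ built from the pure COF $\Theta$; by (NF5) (i.e.\ \cite[Theorem VI.3.1]{Nagy-Foias}) $\bfT$ is a c.n.u.\ contraction whose characteristic function $\Theta_\bfT$ coincides with $\Theta$, which handles component (i). For component (ii), I would invoke the uniqueness half of Theorem~\ref{T:Dmodel}: the triple \eqref{AdmLifts} is a pseudo-commutative tetrablock isometric lift of $(\bA,\bB,\bfT)$ whose last component is the Sz.-Nagy--Foias minimal isometric lift $\sbm{M_z&0\\0&M_\zeta}$ of $\bfT \cong T$, so by the model-theoretic reformulation proved inside Theorem~\ref{T:Dmodel} the off-diagonal blocks vanish and the upper-left pencil coefficients $(\tG_1,\tG_2)$ must equal the Fundamental Operator pair $(G_1,G_2)$ for $(\bA^*,\bB^*,\bfT^*)$; but the given lift already has pencil coefficients $(G_1,G_2)$, so the Fundamental Operator pair of $(\bA^*,\bB^*,\bfT^*)$ is $(G_1,G_2)$, giving component (ii). The same uniqueness statement forces the tetrablock-unitary component $(\tA_{22},\tB_{22},M_\zeta) = (M_{\psi^*\cdot\zeta}, M_\psi, M_\zeta)$ to be the one canonically associated with $(\bA,\bB,\bfT)$ via Definition~\ref{D:canonical} transported through $u_{\rm min}$; this is component (iii). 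One then reads off that all three components agree literally (with $\phi = I_{\cD}$, $\phi_* = I_{\cD_*}$ in Definition~\ref{D:coincide}), so $\Xi$ coincides with $\Xi_{\bA,\bB,\bfT}$.

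I expect the main obstacle to be the careful identification in component (iii): one must check that the concrete tetrablock unitary $(M_{\psi^*\cdot\zeta},M_\psi,M_\zeta)$ appearing in the data set is genuinely the \emph{canonical} tetrablock unitary of $(\bA,\bB,\bfT)$ in the sense of Definition~\ref{D:canonical}, i.e.\ that it arises as the minimal tetrablock-unitary extension of the compressed triple $(A_{\bfT^*},B_{\bfT^*},X_{\bfT^*})$ on $\overline{\operatorname{Ran}}\,Q_{\bfT^*}$. This requires tracing the identification $\cQ_{\bfT^*} \cong \overline{D_{\Theta}L^2(\cD)}$ through the Douglas-to-Sz.-Nagy--Foias transition map $u_{\rm min}$ of \eqref{Pinf}, together with minimality of the extension, to see that no spurious unitary summand has been introduced; the uniqueness-up-to-unitary-equivalence statement of Theorem~\ref{T:UniCanon} is the tool that pins this down. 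Everything else is a matter of assembling results already proved.
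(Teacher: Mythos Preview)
Your overall strategy matches the paper's: first verify that \eqref{AdmLifts} is a strict tetrablock isometry and that compression to $\bcH$ gives a tetrablock contraction, then use (NF5) for the $\Theta$-component and the uniqueness clause of Theorem~\ref{T:Dmodel} for the remaining two components.

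There is one genuine oversight in your coincidence argument. You claim the match is ``literal'' with $\phi = I_{\cD}$, $\phi_* = I_{\cD_*}$. But the characteristic function $\Theta_{\bfT}$ lives on the \emph{defect spaces} $\cD_{\bfT}$, $\cD_{\bfT^*}$ of the concrete operator $\bfT$, which are not the same Hilbert spaces as the abstract coefficient spaces $\cD$, $\cD_*$ coming with the data set $\Xi$; they are only unitarily equivalent. Likewise the Fundamental Operators of $(\bA^*,\bB^*,\bfT^*)$ act on $\cD_{\bfT^*}$, not on $\cD_*$, and the uniqueness statement inside Theorem~\ref{T:Dmodel} is formulated on the space $\sbm{H^2(\cD_{\bfT^*})\\ \cQ_{\bfT^*}}$, not on $\sbm{H^2(\cD_*)\\ \overline{D_\Theta L^2(\cD)}}$. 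So you cannot invoke that uniqueness directly on \eqref{AdmLifts}; you must first transport. This is exactly what the paper does: it takes the unitaries $u\colon \cD\to\cD_{\bfT}$, $u_*\colon \cD_*\to\cD_{\bfT^*}$ furnished by (NF5), forms $(G_1',G_2')=u_*(G_1,G_2)u_*^*$ and $\omega_u(M_{\psi^*\cdot\zeta},M_\psi,M_\zeta)\omega_u^*$, checks that the resulting triple on $\sbm{H^2(\cD_{\bfT^*})\\ \overline{\Delta_{\Theta_{\bfT}}L^2(\cD_{\bfT})}}$ is again a strict tetrablock isometric lift of $(\bA,\bB,\bfT)$ with third entry the minimal isometric lift of $\bfT$, and \emph{then} applies the uniqueness from Theorem~\ref{T:Dmodel} (in its Sz.-Nagy--Foias incarnation, Theorem~\ref{T:NFmodel}) to conclude $({\bf G}_1,{\bf G}_2)=(G_1',G_2')$ and $(R_{\rm NF},S_{\rm NF},W_{\rm NF})=\omega_u(M_{\psi^*\cdot\zeta},M_\psi,M_\zeta)\omega_u^*$. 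The coincidence in Definition~\ref{D:coincide} then holds with $\phi=u$, $\phi_*=u_*$, not with identities. Once you insert this transport step your argument is complete and agrees with the paper's.
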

\begin{proof}
By part (i) of Definition \ref{D:Adm}, the triple as in \eqref{AdmLifts} is a strict tetrablock isometry, and by part (ii), it lifts the model triple $(\bf A, \bf B, \bf T)$ as in \eqref{AdmOps}. Thus in particular, $(\bf A, \bf B, \bf T)$ is a tetrablock contraction and the first part of the theorem follows. For the second part, we use the Sz.-Nagy--Foias model theory for single contractions and Theorem \ref{T:Dmodel} as follows. Apply Theorem VI.3.1 in \cite{Nagy-Foias} to the purely contractive analytic function $\Theta$ to conclude that the characteristic function $\Theta_{\bf T}$ of $\bf T$ coincides with $\Theta$, i.e., there exists unitary operators $u:\cD\to\cD_{\bf T}$ and $u_*:\cD_*\to\cD_{\bf T^*}$ such that $u_*\cdot\Theta(z)=\Theta_{\bf T}(z)\cdot u$ for all $z\in\bD$. Let us set $(G_1',G_2'):=u_*(G_1,G_2)u_*^*$ and $(R',S',W'):=\omega_u(M_{\psi(\zeta)^*\cdot \zeta},M_{\psi(\zeta)},M_\zeta)\omega_u^*$. Since $G_1,G_2$ satisfy the commutativity conditions, $G_1',G_2'$ also satisfy the same conditions, and consequently the triple
\begin{align}\label{ConjLift}
\bigg( \begin{bmatrix} M_{G_1'^* + zG_2'} & 0 \\ 0 & \omega_u M_{\psi(\zeta)^*\cdot \zeta}\omega_u^* \end{bmatrix}, \begin{bmatrix} M_{G_2'^* + zG_1'} & 0 \\ 0 & \omega_u M_\psi\omega_u^* \end{bmatrix},
\begin{bmatrix} M_z & 0 \\ 0 & M_\zeta \end{bmatrix} \bigg)
\end{align}is a strict tetrablock isometry. Note that  since
\begin{align*}
\begin{bmatrix}
 u_*&0\\0&\omega_u
 \end{bmatrix}\begin{bmatrix}
 \Theta\\ \Delta_\Theta
 \end{bmatrix}=\begin{bmatrix}
 \Theta_{\bf T}\\ \Delta_{\Theta_{\bf T}}
 \end{bmatrix}\begin{bmatrix}
 u&0\\0&\omega_u\end{bmatrix},
\end{align*} the unitary operator 
$$\tau:=\begin{bmatrix}u_*&0\\o&\omega_u\end{bmatrix}:\begin{bmatrix}H^2(\cD_*)\\ \overline{ \Delta_\Theta L^2(\cD)} \end{bmatrix}
\to
 \begin{bmatrix}H^2(\cD_{\bf T^*})\\ \overline{ \Delta_{\Theta_{\bf T}} L^2(\cD_{\bf T})}
\end{bmatrix}$$takes the functional model space $\bcH$ as in \eqref{AdmSpace} onto
\begin{align*}
\begin{bmatrix} H^2 (\cD_{\bf T^*}) \\ \overline{ \Delta_{\Theta_{\bf T}} L^2(\cD_{\bf T})} \end{bmatrix} \ominus
\begin{bmatrix} \Theta_{\bf T} \\ \Delta_{\Theta_{\bf T}} \end{bmatrix} H^2(\cD_{\bf T}).
\end{align*}Therefore by part (ii) of Definition \ref{D:Adm}, the tetrablock isometry as in \eqref{ConjLift} is a lift of $(\bf A, \bf B, \bf T)$ via the embedding $\iota\cdot \tau|_\bcH$, where 
$$\iota:\begin{bmatrix}H^2 (\cD_{\bf T^*}) \\ \overline{ \Delta_{\Theta_{\bf T}} L^2(\cD_{\bf T})} \end{bmatrix} \ominus
\begin{bmatrix} \Theta_{\bf T} \\ \Delta_{\Theta_{\bf T}} \end{bmatrix} \to
\begin{bmatrix}H^2(\cD_{\bf T}) \\ \overline{ \Delta_{\Theta_{\bf T}} L^2(\cD_{\bf T})} \end{bmatrix}$$is the inclusion map. Since
$$
\begin{bmatrix}
M_z&0\\0&M_\zeta
\end{bmatrix}:\begin{bmatrix}H^2(D_{\bf T}) \\ \overline{ \Delta_{\Theta_{\bf T}} L^2(\cD_{\bf T})} \end{bmatrix}\to
\begin{bmatrix}H^2(D_{\bf T}) \\ \overline{ \Delta_{\Theta_{\bf T}} L^2(\cD_{\bf T})} \end{bmatrix}
$$is a minimal isometric lift of $\bf T$, by part (2) of Theorem \ref{T:Dmodel}, there is a unique such tetrablock isometric lift with the last entry of the lift fixed.
 If $\bf G_1,\bf G_2$ are the Fundamental Operators of $(\bf A^*,\bf B^*, \bf T^*)$, then by Theorem \ref{T:NFmodel}, 
\begin{align*}
\left(\begin{bmatrix}
M_{\bf G_1^*+z\bf G_2}&0\\0&R_{\rm{NF}}
\end{bmatrix},   
 \begin{bmatrix}  M_{\bf G_2^*+z\bf G_1}&0\\0&S_{\rm{NF}}  \end{bmatrix},   
\begin{bmatrix} M_z&0\\0& W_{\rm{NF}} \end{bmatrix}\right)
\end{align*}
is another tetrablock isometric lift of $(\bf A,\bf B, \bf T)$, where $(R_{\rm NF}, S_{\rm NF},W_{\rm NF})$ is the canonical tetrablock unitary associated 
with $(\bf A,\bf B, \bf T)$. Therefore we must have
$({\bf G}_1,{\bf G}_2)=(G_1',G_2')=u_*(G_1,G_2)u_*^*$ and  
$$
(R_{\rm NF}, S_{\rm NF},W_{\rm NF})=\omega_u(M_{\psi(\zeta)^*\cdot \zeta},M_{\psi(\zeta)},M_\zeta)\omega_u^*.
$$
This is what was needed to be shown.
\end{proof}

\noindent
\textbf{Epilogue.}  It is easy to write down tetrablock data sets as in \eqref{Edata}.  Given such a tetrablock data set, it may not be very tractable to determine
if in addition it satisfies conditions (i) and (ii) in Definition \ref{D:Adm}.  

However it is not so difficult to cook up viable examples.  For example,
 we note that the commutativity conditions in (i) are automatic if we choose $G_1$ and $G_2$ to be scalar operators on $\cD_*$.
We can arrange the pencil contractivity condition  to hold just by choosing $G_1$ and $G_2$ to be sufficiently small.  If we choose the operator
$\psi(\zeta)$ to be a scalar for each $\zeta$, then we are forced to choose $\psi(\zeta) = G_2^* \zeta + G_1$.  Since we have already chosen $G_2$
and $G_1$ so that the pencil contractivity condition holds, then $\psi(\zeta)$ is contractive and of course a scalar operator is normal.  Then all conditions
are satisfied. In this way we get a whole class of tractable examples of special tetrablock contractions for any pure COF $\Theta$.
If $\cD_*$ and $\cD$ are both at most one-dimensional,  all the examples are of this form.

\smallskip

\noindent
\textbf{Conflict of Interest/Dataset Statement.}  The authors state that there are no conflicts of interest.  No data sets were generated or analyzed during the 
current study.

\end{document}